\newtheorem{theorem}{Theorem}[section]
\newtheorem{lemma}[theorem]{Lemma}
\newtheorem{corollary}[theorem]{Corollary}
\newdefinition{definition}[theorem]{Definition}
\newdefinition{remark}[theorem]{Remark}
\newdefinition{example}[theorem]{Example}
\newdefinition{proposition}[theorem]{Proposition}
\newproof{proof}{Proof}
\journal{}
\begin{document}

\begin{frontmatter}



\title{Ekeland's Variational Principle for An $\bar{L}^{0}-$Valued Function on A Complete Random Metric Space\tnoteref{t1}}
\tnotetext[t1]{Supported by NNSF No. 10871016}


\author[se]{Tiexin Guo\corref{cor1}}
\ead{txguo@buaa.edu.cn} \cortext[cor1]{Corresponding author}
\address[se]{LMIB and School of Mathematics and Systems Science, Beihang University, Beijing 100191, P.R. China}

\author[gs]{Yujie Yang}
\ead{yangyujie007@163.com}
\address[gs]{LMIB and School of Mathematics and Systems Science, Beihang University, Beijing 100191, P.R. China}

\begin{abstract}
 Motivated by the recent work on conditional risk measures, this paper  studies the  Ekeland's variational
  principle for a  proper, lower semicontinuous and lower bounded $\bar{L}^{0}-$valued function, where $\bar{L}^{0}$ is the
  set of equivalence classes of extended real-valued random variables on a probability space. First, we
  prove a general form of  Ekeland's variational principle for such a function defined on a complete random
  metric space. Then, we give a more precise  form of Ekeland's variational principle for such  a local function on a
  complete random normed module. Finally,  as applications, we establish  the Bishop-Phelps theorem in a complete random normed
  module under the framework of random conjugate spaces.
\end{abstract}

\begin{keyword}


Random metric space, random normed module, $\bar{L}^{0}-$valued function, lower semicontinuity,  Ekeland's variational principle, Bishop-Phelps
theorem

\MSC[2010] 58E30  \sep47H10  \sep 46H25    \sep  46A20
\end{keyword}

\end{frontmatter}

\section{Introduction }
\label{}

In 1960,  Bishop and  Phelps \cite{B-R} showed that a nonempty
closed convex subset  of a Banach space  admits ``many'' support
points and support functionals (``many'' means `` norm dense in the
appropriate set''), in particular presented a key ordering
technique. In 1972, Ekeland \cite{Ekeland-1972} extended the
ordering technique to a complete metric space so that he could
establish the famous variational principle for a  proper lower
semicontinuous and lower bounded extended real-valued function
defined on a complete metric space together with a series of
applications to many fields from control theory to global analysis
\cite{Ekeland-1974, Ekeland-1979}. Br$\phi$ndsted
\cite{Brndsted-1974} and Brezis and Browder \cite{Brezis}
generalized the ordering technique of  Bishop and Phelps to a
general   ordering principle in nonlinear analysis and gave some
further applications. Subsequently, the Ekeland's variational
principle was proved to be  equivalent to  the famous Caristi's
fixed point theorem \cite{Caristi}, the drop theorem and the petal
theorem \cite{penot} and the completeness of a metric space
\cite{Sullivan}. To meet the needs of  the vectorial optimization, a
lot of scholars have generalized the Ekeland's variational principle
from real-valued functions to  vector-valued (namely, partially
ordered vector space-valued) functions
 since 2000, see  \cite{Gopfert-Tammer-Za, Finet, Yousuke Araya} and their references for details.

Motivated by the recent applications of random metric theory to conditional risk measures\cite{Filipovic-Kupper,Guotx-relation,Guotx-recent}, in this paper we study the  Ekeland's variational
principle  for a function from a random metric space (briefly, an $RM$ space) with  base $(\Omega,\mathcal {F},P)$(a probability space) to $\bar{L}^{0}(\mathcal {F})$, where $\bar{L}^{0}(\mathcal {F})$ denotes the set of equivalence classes of extended real-valued random variables defined on $(\Omega,\mathcal {F},P)$.

 An $RM$ space is a random generalization of  an ordinary metric space,
whose original definition was given in \cite{SS}. According to \cite{SS}, the random distance $d(p,q)$ between two points $p$ and $q$ in an $RM$ space $(E,d)$ with  base $(\Omega,\mathcal {F},P)$ is a nonnegative random variable defined on $(\Omega,\mathcal {F},P)$. A new version of the  original $RM$ space was presented in \cite{Guotx-basic} in the course of the development of random metric theory in the direction of functional analysis, according to which the random distance $d(p,q)$ between two points $p$ and $q$ in an $RM$ space $(E,d)$ with  base $(\Omega,\mathcal {F},P)$ is the equivalence class of a nonnegative random variable defined on $(\Omega,\mathcal {F},P)$. The study of an $RM$ space is different from that of an ordinary metric space in that the distance function on  an ordinary metric space induces a unique uniformity, whereas the random distance function on an $RM$ space $(E,d)$ with  base $(\Omega,\mathcal {F},P)$ can induce two kinds of uniformities, namely the $d_{\varepsilon,\lambda}$-uniformity and the $d_{c}$-uniformity, which are defined as follows, respectively. Let $(E,d)$ be an $RM$ space  with  base $(\Omega,\mathcal {F},P)$, $\varepsilon$ and $\lambda$ two positive real numbers such that $0<\lambda<1$,  $U(\varepsilon,\lambda)=\{(p,q)\in E\times E
:P\{\omega\in\Omega:d(p,q)(\omega)<\varepsilon\}>1-\lambda\}$ and
$U(d)=\{ U(\varepsilon,\lambda):\varepsilon>0,0<\lambda<1\}$, then
$U(d)$ forms a base for some metrizable  uniformity  on $E$,
 called
 the $d_{\varepsilon,\lambda}$-uniformity, whose topology is called the
$(\varepsilon,\lambda)$-topology determined by the random metric
$d$, denoted by $\mathcal {T}_{\varepsilon,\lambda}$. It is known from \cite{Dunford} that $\bar{L}^{0}(\mathcal {F})$  is a complete lattice under the ordering $\leqslant$:
$\xi\leqslant \eta$ iff $\xi^{0}(\omega)\leqslant\eta^{0}(\omega)$,
for almost all $\omega$ in $\Omega$ (briefly, a.s.), where $\xi^{0}$
and $\eta^{0}$ are arbitrarily chosen representatives of $\xi$ and
$\eta$, respectively. Furthermore, every subset A of
$\bar{L}^{0}(\mathcal {F})$ has a supremum and infimum, denoted by
$\vee A$ and $\bigwedge A$, respectively. It is clear that  $ L^{0}(\mathcal
{F})$ consisting of all  equivalence classes of real-valued random variables defined on $(\Omega,\mathcal {F},P)$, as a sublattice of $\bar{L}^{0}(\mathcal {F})$, is also a
complete lattice in the sense that every subset with an upper bound has
a supremum. Let $L^{0}_{+}(\mathcal {F})=\{\xi\in L^{0}({\cal
F})\,|\,\xi\geqslant0\}$ and $L^{0}_{++}(\mathcal {F})=\{\xi\in L^{0}({\cal
F})\,|\,\xi>0 ~on~ \Omega\}$, where $\xi>\eta ~on~ \Omega$
means $\xi^{0}(\omega)>\eta^{0}(\omega)$ for $P-$almost all $\omega \in \Omega$ (briefly, a.s.) for any $\xi$ and $\eta$ in $\bar{L}^{0}({\cal F})$,
where $\xi^{0}$ and $\eta^{0}$ are arbitrarily  chosen
representatives of $\xi$ and $\eta$ in $\bar{L}^{0}({\cal F})$,
respectively. Let $(E,d)$ be an $RM$ space with base $(\Omega,{\cal F},P)$, for  any
$\varepsilon \in L^{0}_{++}(\mathcal {F})$, let $U(\varepsilon )=\{(p,q)\in
E\times E :d(p,q)\leq \varepsilon \}$ and
$\tilde{U}(d)=\{U(\varepsilon ):\varepsilon \in L^{0}_{++}(\mathcal {F})\}$, then
$\tilde{U}(d)$ forms a base for some Hausdorff uniformity
on $E$, called the $d_{c}$-uniformity, whose topology is called
the ${\cal T}_{c}$-topology determined by the random metric $d$, denoted by ${\cal T}_{c}$.

Let $(E,d)$ be an $RM $ space  with base $(\Omega,{\cal F},P)$, a function $f:E\rightarrow \bar{L}^{0}(\mathcal {F})$ is proper if $f(x)>-\infty$ on $\Omega$ for every
$x\in E$ and $dom(f):=\{x\in E~|~ f(x)<+\infty \textmd{ on }  \Omega \}$, denoting its effective domain, is not empty; $f$ is
bounded from below if there exists $\xi \in L^{0}(\mathcal {F})$ such that $f(x)\geq \xi$ for any $x \in E$ and $f$ is  ${\cal
T}_{\varepsilon,\lambda}$( resp.,${\cal
T}_{c}$)-lower semicontinuous if its epigraph $epi(f):=\{(x,r) \in E\times L^{0}(\mathcal {F})| f(x)\leq r\}$ is closed in $(E,\mathcal{
T}_{\varepsilon,\lambda})\times (L^{0}(\mathcal {F}),\mathcal{
T}_{\varepsilon,\lambda})$ (accordingly, $(E,\mathcal{ T}_{c})\times  (L^{0}(\mathcal {F}),\mathcal{ T}_{c})$), where $L^{0}(\mathcal
{F})$ forms an $RM$ space endowed with the random metric $d:
L^{0}(\mathcal {F}) \times  L^{0}(\mathcal {F}) \rightarrow L^{0}_{+}(\mathcal {F})$  by
$d(p,q)=|p-q| $ for any $p$ and $q \in L^{0}(\mathcal {F})$.

It is clear that the $\mathcal {T}_{c}$-topology is much stronger than the $(\varepsilon,\lambda)$-topology and that the $(\varepsilon,\lambda)$-topology is quite natural from the viewpoint of probability theory, for example the $(\varepsilon,\lambda)$-topology  on $L^{0}(\mathcal {F})$ is exactly the one of convergence in probability $P$. In Section 2 of this paper,  making full use of the advantage of the $(\varepsilon,\lambda)$-topology we first establish the Ekeland's variational principle and equivalent Caristi's fixed point theorem for a function $f$ from a $d_{\varepsilon,\lambda}$-complete $RM$ space  with base $(\Omega,{\cal F},P)$ to $\bar{L}^{0}(\mathcal {F})$, which is proper, ${\cal
T}_{\varepsilon,\lambda}$-lower semicontinuous and bounded from below. Since our definition of lower semicontinuity is weaker and more natural than that given in the earlier two approaches \cite{You-Zhu,Bai-Xiong}, our results also improve those in \cite{You-Zhu,Bai-Xiong}.

Since $\bar{L}^{0}(\mathcal {F})$ is a partially ordered set and an $RM$ space does not possess the rich stratification structure, for a
function $f$ from a $d_{\varepsilon,\lambda}$-complete $RM$ space  with base $(\Omega,{\cal F},P)$ to $\bar{L}^{0}(\mathcal {F})$, which
 is proper and bounded from below, there is unnecessarily an element  $x_{\varepsilon} \in E$ for any $\varepsilon \in L^{0}_{++}(\mathcal {F})$
 such that $f(x_{\varepsilon})\leq \bigwedge f(E)+\varepsilon$  so that we can not give the location of the approximate minimal point in
 the Ekeland's variational principle given in Section 2 of this paper. But when we come to a special class of $RM$ spaces---random normed
 modules(briefly, $RN$ modules) first introduced in \cite{Guotx-basic,exte}, which are a stronger
  random generalization of ordinary normed spaces, whose $(\varepsilon,\lambda)$-topology is exactly the
  frequently used $(\varepsilon,\lambda)$-linear topology \cite{Guotx-relation} and whose ${\cal T}_{c}$-topology is just
  the locally $L^{0}-$convex topology first introduced in \cite{Filipovic-Kupper},  since $RN$ modules possess the rich stratification
  structure, let $(E,\|\cdot\|)$ be an $RN$ module over the real number field $R$ with base
$(\Omega,{\cal F},P)$ such that $E$ has the countable concatenation property first introduced
 in \cite{Guotx-relation} and $f:E\rightarrow \bar{L}^{0}(\mathcal {F})$ a function with the local property
 such that $f$ is proper and bounded from below,  we can prove that there is an element  $x_{\varepsilon} \in E$  for any
  $\varepsilon \in L^{0}_{++}(\mathcal {F})$ such that $f(x_{\varepsilon})\leq \bigwedge f(E)+\varepsilon$, so that we
  can give a more precise form of Ekeland's variational principle for such a ${\cal T}_{\varepsilon,\lambda}$-lower semicontinuous $f$
  in Section 3 of this paper. Further, based on the relations between the basic results derived from the two kinds of
  topologies \cite{Guotx-relation}, we can also establish the precise form of Ekeland's variational principle for
  such a ${\cal T}_{c}$-lower semicontinuous $f$. Since Guo in Section 8 of \cite{Guotx-recent} has proved that the complete $RN$ module $L^{p}_{\mathcal {F}}(\mathcal {E})$ constructed in \cite{Filipovic-Kupper} is an universally suitable
  model space for  a conditional risk measure, the so-called conditional risk measure is exactly a proper, $L^{0}(\mathcal {F})$-convex,
  cash invariant and monotone function from $L^{p}_{\mathcal {F}}(\mathcal {E})$ to $\bar{L}^{0}(\mathcal {F})$, which together with a
  general $L^{0}(\mathcal {F})$-convex function from an $RN$ module to $\bar{L}^{0}(\mathcal {F})$ has the local property, and so our results also cover the optimization problems for such  functions, which will be discussed in more details in a forthcoming paper.

Since the theory of random conjugate spaces has played an essential
role in both the development of the theory of $RN$ modules and their
applications to conditional risk measures, our previous works have
been focused on the theory of random conjugate spaces of $RN$
modules \cite{ Guotx-relation, Guotx-recent, Guotx-James,
Guotx-Alaoglu, Guotx-Xiao-Chen, Guotx-shiguang}. This paper
continues the study of the theory of random conjugate spaces,
precisely speaking, as applications of the results in Section 3,
Section 4 is devoted to establishing  the Bishop-Phelps theorem in
complete $RN$ modules under the framework of random conjugate spaces
and under the two kinds of topologies.

\section{The Ekeland's variational principle on a $d_{\varepsilon,\lambda}$-complete $RM$ space}
 \label{}

 Throughout this paper,
$(\Omega,{\cal F},P)$ denotes a probability space, $K$ the real
number field $R$
 or the complex number field $C$, $N$ the set of positive integers,   $\bar{L}^{0}(\mathcal {F})$
the set of equivalence classes of extended real-valued  random variables on $\Omega$ and $L^{0}({\cal F},K)$
the algebra of equivalence classes of $K-$valued ${\cal
F}-$measurable random variables on $\Omega$ under the ordinary
scalar multiplication, addition and multiplication operations on
equivalence classes, denoted by $L^{0}(\mathcal {F})$ when $K=R$.

The pleasant properties of the complete lattice $\bar{L}^{0}(\mathcal {F})$ (see the introduction for the notation $\bar{L}^{0}(\mathcal {F})$ )
are summarized as follows:

\begin{proposition}[\cite{Dunford}]
For every subset $A$ of
$\bar{L}^{0}(\mathcal {F})$,  there exist countable subsets
$\{a_{n}\,|\,n\in N\}$ and  ${\{b_{n}\,|\,n\in N}\}$ of $A$ such
that $\vee_{n\geqslant1}$ $a_{n}=\vee A$ and $\wedge_{n\geqslant1}$
$b_{n}=\wedge A$. Further, if $A$ is directed (dually directed) with
respect to $\leq$, then the above $\{a_{n}\,|\,n\in N\}$
(accordingly,  $\{b_{n}\,|\,n\in N\}$) can be chosen as
nondecreasing (correspondingly, nonincreasing) with respect to
$\leq$.
\end{proposition}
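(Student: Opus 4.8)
The plan is to reduce the general extended-real-valued situation to a bounded, integrable one and then run the classical essential-supremum argument, treating the supremum in detail and the infimum by symmetry.

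\emph{Reduction.} First I would fix a strictly increasing order isomorphism $\phi:[-\infty,+\infty]\to[0,1]$ (for instance a suitably normalized arctangent, with $\phi(-\infty)=0$ and $\phi(+\infty)=1$). Acting on representatives, $\phi$ induces an order isomorphism $\Phi$ from $\bar{L}^{0}(\mathcal{F})$ onto the set of classes admitting a representative with values in $[0,1]$; being monotone and carrying the lattice structure of $[-\infty,+\infty]$ onto that of $[0,1]$, it preserves arbitrary suprema and infima. Since a countable family realizing $\vee\Phi(A)$ pulls back under $\Phi^{-1}$ to one realizing $\vee A$, it suffices to treat the case in which every element of $A$ has a representative taking values in $[0,1]$; in particular each such class is integrable, and the pointwise supremum of any countable subfamily is again measurable (a countable supremum of measurable functions is measurable), which is precisely why we must restrict attention to countable subfamilies.

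\emph{Construction.} Let $\mathcal{C}$ denote the collection of all countable subfamilies of $A$, and for $C\in\mathcal{C}$ let $\vee C$ be the pointwise supremum. Set $s=\sup\{\int_{\Omega}(\vee C)\,dP : C\in\mathcal{C}\}$, a finite number since all values lie in $[0,1]$. Choose a sequence $C_{k}\in\mathcal{C}$ with $\int_{\Omega}(\vee C_{k})\,dP\to s$, and put $C^{\ast}=\bigcup_{k}C_{k}$, which is still countable and satisfies $\vee C^{\ast}\ge\vee C_{k}$ for every $k$, whence $\int_{\Omega}(\vee C^{\ast})\,dP=s$. Writing $C^{\ast}=\{a_{n}:n\in N\}$ produces the candidate countable family, and it remains to verify $\vee_{n\ge1}a_{n}=\vee A$.

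\emph{Verification (the crux).} Clearly $\vee C^{\ast}\le\vee A$ because $C^{\ast}\subseteq A$. For the reverse inequality I would show that $\vee C^{\ast}$ is already an upper bound of $A$: if $a\le\vee C^{\ast}$ failed for some $a\in A$, so that $a^{0}>(\vee C^{\ast})^{0}$ on a set of positive measure, then the countable family $C^{\ast}\cup\{a\}$ would satisfy $\vee(C^{\ast}\cup\{a\})\ge\vee C^{\ast}$ with strict inequality on that set, forcing $\int_{\Omega}(\vee(C^{\ast}\cup\{a\}))\,dP>s$ and contradicting the definition of $s$. Hence $a\le\vee C^{\ast}$ for all $a\in A$, so $\vee A\le\vee C^{\ast}$ and the two coincide. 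I expect this maximality step---that the integral-extremizing countable family automatically dominates all of $A$---to be the main obstacle, since it is exactly where completeness of the lattice and the strict monotonicity of $\phi$ are genuinely used; the infimum statement follows by applying the supremum case to $\{-\xi:\xi\in A\}$, or equally by the dual construction with a minimizing integral.

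\emph{Directed refinement.} Finally, suppose $A$ is directed upward. Starting from the countable family $\{a_{n}\}$ just obtained, I would build a nondecreasing sequence inductively: set $\tilde{a}_{1}=a_{1}$ and, using directedness applied to $\tilde{a}_{k}$ and $a_{k+1}$ (both in $A$), choose $\tilde{a}_{k+1}\in A$ with $\tilde{a}_{k+1}\ge\tilde{a}_{k}$ and $\tilde{a}_{k+1}\ge a_{k+1}$. Then $\{\tilde{a}_{k}\}$ is nondecreasing, lies in $A$, and satisfies $\vee_{k}\tilde{a}_{k}\ge\vee_{k}a_{k}=\vee A\ge\vee_{k}\tilde{a}_{k}$, so it still realizes $\vee A$. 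The dually directed case for $\wedge A$ is handled by the symmetric construction.
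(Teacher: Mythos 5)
Your proposal is correct, and it coincides with the paper's own treatment in the only sense available: the paper gives no proof of this proposition, citing it to Dunford--Schwartz, where the standard argument is precisely your reduction to $[0,1]$-valued classes by a strictly increasing bijection $\phi:[-\infty,+\infty]\to[0,1]$ followed by maximizing $\int_{\Omega}(\vee C)\,dP$ over countable subfamilies and using strict monotonicity of the integral on sets of positive measure to show the extremal countable supremum dominates all of $A$. Your verification that $\int_{\Omega}(\vee C^{\ast})\,dP=s$, the contradiction step, and the inductive monotone refinement in the directed case are all sound as written.
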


Specially, $L^{0}_{+}(\mathcal {F})=\{\xi\in L^{0}({\cal F})\,|\,\xi\geqslant0\}$, $L^{0}_{++}(\mathcal {F})=\{\xi\in L^{0}({\cal F})\,|\,\xi>0 ~on~ \Omega\}$.

 As usual, $\xi>\eta$ means
$\xi\geqslant\eta$ and $\xi\neq\eta$, whereas $\xi>\eta ~on~ A$
means $\xi^{0}(\omega)>\eta^{0}(\omega)$ a.s. on A for any $A \in
\mathcal {F}$ and  $\xi$ and $\eta$ in $\bar{L}^{0}({\cal F})$,
where $\xi^{0}$ and $\eta^{0}$ are arbitrarily  chosen
representatives of $\xi$ and $\eta$,
respectively.

For any $A\in \mathcal {F}$, $A^{c}$ denotes the complement of $A$,
$\tilde{A}=\{ B \in \mathcal {F}| P(A\Delta B)=0\} $ denotes the
equivalence class of $A$, where $\Delta$ is the symmetric difference
operation, $I_{A}$ the characteristic function of $A$, and
$\tilde{I}_{A}$ is used to denote the equivalence class of $I_{A}$;
given two $\xi$ and $\eta$ in $\bar{L}^{0}({\cal F})$, and $A=\{\omega \in
\Omega:\xi^{0}\neq \eta^{0}\}$, where $\xi^{0}$ and $\eta^{0}$ are
arbitrarily chosen representatives of $\xi$ and $\eta$ respectively,
then we always write $[\xi \neq \eta]$ for the equivalence class of
$A$ and $I_{[\xi \neq \eta]}$ for $\tilde{I}_{A}$, one can also
understand the implication of such notations as $I_{[\xi \leq
\eta]}$, $I_{[\xi < \eta]}$ and $I_{[\xi = \eta]}$.

 For an arbitrary chosen representative $\xi^{0}$ of
$\xi\in L^{0}({\cal F},K)$, define the two $\mathcal {F}-$measurable
random variables $(\xi^{0})^{-1}$ and $|\xi^{0}|$ by
$(\xi^{0})^{-1}(\omega)=\frac{1}{\xi^{0}(\omega)}$ if $\xi^{0}(\omega)\neq 0 $, and
$(\xi^{0})^{-1}(\omega)=0$ otherwise, and by
$|\xi^{0}|(\omega)=|\xi^{0}(\omega)|,\forall \omega\in \Omega$. Then the
equivalence class $\xi^{-1}$ of $(\xi^{0})^{-1}$ is called the
generalized inverse of $\xi$ and the equivalence class $|\xi|$ of
$|\xi^{0}|$ is called the absolute value of $\xi$. It is clear that
$\xi\cdot \xi^{-1}=I_{[\xi\neq 0]}$.



The main result in this section  is the  Ekeland's variational
principle for a proper lower semicontinuous and lower bounded
$\bar{L}^{0}-$valued function on a $d_{\varepsilon,\lambda}$-complete random metric space, namely Theorem 2.10
below. To prove this theorem, we first give some preliminaries in
Section 2.1.

\subsection{A general principle on ordered sets }
\label{}

\begin{definition} [\cite{Guotx-basic}]
An ordered pair $(S,d)$ is
called {\it{a random metric space (briefly, an $RM$ space) with base $(\Omega,{\cal F},P)$}}
if $S$ is a nonempty set and the mapping $d$ from $S\times S$ to
$L^{0}_{+}(\mathcal {F})$ satisfies the following three axioms:\\
\noindent($RM$-1) $d(p,q)=0$ $\Leftrightarrow$ $p=q$;\\
\noindent($RM$-2) $d(p,q)=d(q,p), \forall p,q \in S$;\\
\noindent($RM$-3) $d(p,r)\leq d(p,q)+d(q,r),\forall p,q,r \in S$,\\
 where $d(p,q)$ is called the random distance between $p$
and $q$.
\end{definition}

\begin{example}
 Clearly, $(L^{0}(\mathcal {F}),d)$ is an $RM$
space  with base $(\Omega,{\cal F},P)$, where the mapping $d:
L^{0}(\mathcal {F}) \times  L^{0}(\mathcal {F})       \rightarrow L^{0}_{+}(\mathcal {F})$ is defined by
$d(p,q)=|p-q| $ for any $p$ and $ q \in L^{0}(\mathcal {F})$.
\end{example}

 Let $(E,d)$ be an $RM$ space  with base
$(\Omega,{\cal F},P)$, define $\mathcal {V}:E\times E\rightarrow
D^{+}$ by $\mathcal {V}_{p,q}(t)=P\{\omega \in \Omega:
d(p,q)(\omega)< t\}$ for all nonnegative numbers $t$ and $p$ and $q$
in $E$, where $D^{+}=\{ F:[0,+\infty)\rightarrow [0,1]| F \textmd{
is nondecreasing,  left continuous }$ $\textmd{on
}(0,+\infty),F(0)=0 \textmd{ and } \lim_{t\rightarrow +\infty}
F(t)=1\} $, then $(E,\mathcal {V})$ is a Menger probabilistic metric
space under the $t$-norm $W:[0,1]\times [0,1]\rightarrow [0,1]$
defined by $W(a,b)=max(a+b-1,0)$ for all $a$ and $b$ in $[0,1]$, and
the $d_{\varepsilon,\lambda}$-uniformity and its
$(\varepsilon,\lambda)$-topology $\mathcal
{T}_{\varepsilon,\lambda}$ on $E$ (see the introduction of this
paper) are those induced from the probabilistic metric $\mathcal
{V}$  \cite{SS}. It is clear that  a sequence $\{p_{n},n \in N\}$
converges in the $(\varepsilon,\lambda)$-topology to some point $p$
in $(E,d)$  iff $\{d(p_{n},p),n\in N\}$ converges in
probability $P$ to 0, in particular, the
$(\varepsilon,\lambda)$-topology  on $L^{0}(\mathcal {F})$ is
exactly the one of convergence in probability $P$.

The $d_{c}$-uniformity on an $RM$ space $(E,d)$ (see the introduction of this paper) is peculiar to the random distance $d$, the idea of our introducing the $d_{c}$-uniformity and its topology $\mathcal {T}_{c}$ is motivated by the work of D. Filipovi\'{c}, et al's introducing the locally $L^{0}$-convex topology for $RN$ modules \cite{Filipovic-Kupper}.

We say that an $RM$ space $(E,d)$ is  $d_{\varepsilon,\lambda}$-complete (resp., $d_{c}$-complete)  if the
 $d_{\varepsilon,\lambda}$-uniformity (accordingly, $d_{c}$-uniformity) is complete. From now on, the $(\varepsilon,\lambda)$-topology and the $\mathcal {T}_{c}$-topology induced  by the $d_{\varepsilon,\lambda}$-uniformity and $d_{c}$-uniformity for every $RM$ space $(E,d)$ are denoted by $\mathcal{T}_{\varepsilon,\lambda}$ and $\mathcal {T}_{c}$, respectively, whenever no confusion occurs.

\begin{definition}
Let $X$ be a Hausdorff space and $f:X\rightarrow
\bar{L}^{0}(\mathcal {F})$, then\\

\noindent(1) $dom(f):=\{x\in X~|~ f(x)<+\infty \textmd{ on } \Omega\}$ is called
the effective domain of $f$.\\
\noindent(2) $f$ is  proper if $f(x)>-\infty$ on $\Omega$ for every $x\in X$
and $dom(f)\neq \emptyset$.\\
\noindent(3)$f$ is bounded from below (resp., bounded from above) if there
exists $\xi \in L^{0}(\mathcal {F})$ such that $f(x)\geq \xi$
(accordingly, $f(x)\leq \xi$ ) for any $x \in X$.

\end{definition}

We first give the following:

\begin{lemma}
 Let $(X,\mathscr{U})$ be a complete Hausdorff
uniform space,  $\leq$  a partial ordering on $X$ and $\phi:
X\rightarrow \bar{L}^{0}(\mathcal{F})$  proper and  bounded from below.  Further, if $x\leq y$ $\Rightarrow $
$\phi(y)\leq \phi(x)$,  then for each totally ordered subset $M$ in $X$
such that $\phi(M)\subset L^{0}(\mathcal{F})$, $\{\phi(m), m\in M\}$ is a
$d_{\varepsilon,\lambda}-$Cauchy net  in
$L^{0}(\mathcal{F})$.
\end{lemma}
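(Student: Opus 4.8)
The plan is to show that the net $\{\phi(m),m\in M\}$, which is nonincreasing and bounded from below, in fact converges in the $(\varepsilon,\lambda)$-topology to the infimum $\eta:=\bigwedge\phi(M)$; since a convergent net in a uniform space is automatically Cauchy, this delivers the assertion. First I would record the order structure. Because $m\mapsto\phi(m)$ reverses $\leq$ and $M$ is totally ordered, any two values $\phi(m_{1}),\phi(m_{2})$ are comparable (the larger index gives the smaller value), so $\phi(M)$ is a chain in $L^{0}(\mathcal{F})$ and in particular is dually directed. Since $\phi$ is bounded from below by some $\xi\in L^{0}(\mathcal{F})$ and $\phi(M)\subset L^{0}(\mathcal{F})$, fixing any $m_{0}\in M$ gives $\xi\leq\eta\leq\phi(m_{0})$, so $\eta\in L^{0}(\mathcal{F})$. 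Moreover, regarding $(M,\leq)$ as a directed index set, the net $\{\phi(m)\}_{m\in M}$ is nonincreasing and satisfies $\phi(m)\geq\eta$ for every $m$.

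Next I would pass from the (possibly uncountable) net to a countable sequence via the Proposition quoted above (\cite{Dunford}). Since $\phi(M)$ is dually directed, that Proposition furnishes a nonincreasing sequence $b_{n}=\phi(m_{n})\in\phi(M)$ with $\bigwedge_{n}b_{n}=\eta$. Choosing representatives, $\{b_{n}^{0}\}$ is pointwise nonincreasing and bounded below by $\xi^{0}$ a.s., hence converges a.s. to its pointwise infimum, which represents $\eta$; consequently $b_{n}\to\eta$ in probability $P$, i.e.\ $P\{b_{n}-\eta\geq\delta\}\to 0$ for each $\delta>0$.

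The crux is then a domination step that transfers the tail behaviour of the sequence to the entire tail of the net. Given $\varepsilon>0$ and $0<\lambda<1$, pick $N$ with $P\{b_{N}-\eta\geq\varepsilon/2\}<\lambda/2$. For any $m\in M$ with $m\geq m_{N}$ one has $\phi(m)\leq\phi(m_{N})=b_{N}$ and $\phi(m)\geq\eta$, so $0\leq\phi(m)-\eta\leq b_{N}-\eta$, whence $\{\phi(m)-\eta\geq\varepsilon/2\}\subset\{b_{N}-\eta\geq\varepsilon/2\}$ and $P\{|\phi(m)-\eta|\geq\varepsilon/2\}<\lambda/2$. Thus $\phi(m)\to\eta$ in the $(\varepsilon,\lambda)$-topology. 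For $m,m'\geq m_{N}$ the metric triangle inequality $|\phi(m)-\phi(m')|\leq|\phi(m)-\eta|+|\phi(m')-\eta|$ gives $P\{|\phi(m)-\phi(m')|\geq\varepsilon\}<\lambda$, that is, $(\phi(m),\phi(m'))\in U(\varepsilon,\lambda)$, which is precisely the $d_{\varepsilon,\lambda}$-Cauchy condition.

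I expect the main obstacle to be exactly that $M$ is an arbitrary, possibly uncountable, totally ordered index set, so pointwise a.s.\ sequential convergence cannot be applied to the net itself. The Proposition on $\bar{L}^{0}(\mathcal{F})$ is the tool that resolves this by reducing the infimum to a countable nonincreasing sequence; after that, the order-reversing hypothesis together with the lower bound $\eta$ lets a tail of that sequence dominate the whole tail of the net, which is what makes the estimate uniform over the net.
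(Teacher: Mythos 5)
Your proposal is correct and follows essentially the same route as the paper: both reduce the dually directed (chain) image $\phi(M)$ to a nonincreasing sequence via the Dunford--Schwartz proposition, obtain a.s.\ (hence in-probability) convergence to $\eta=\bigwedge\phi(M)$, and then use monotonicity of the net to conclude Cauchyness. The only difference is that your explicit domination estimate $\eta\leq\phi(m)\leq\phi(m_{N})$ for $m\geq m_{N}$, which transfers the tail bound from the sequence to the whole net, is exactly the step the paper compresses into its final sentence, so your write-up is in fact a more complete version of the same argument.
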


\begin{proof}
 Since a totally ordered set is also a
directed set, $\{\phi(m), m\in M\}$ can be naturally understood as a net defined on
$M$. By the hypothesis that $\phi$ is  bounded from below, then
 $\phi(M)$ has a infimum $\bigwedge \{\phi(x):x\in M\}:=\eta $ by the completeness of the lattice $L^{0}(\mathcal{F})$.
 Since $x\leq y$ $\Rightarrow $ $\phi(y)\leq
\phi(x)$ and $M$ is a totally ordered subset, it follows that $\{\phi(m): m\in M\}$ is  directed downwards. From Proposition 2.1,  there exists a sequence
$\{\phi(x_{n}):n \in N\} \subset \phi(M)$ such
that $\{\phi(x_{n}):n \in N \}$
 converges  to $\eta$ in a nonincreasing way, and hence also converges to $\eta$ in probability $P$, then $\{\phi(x_{n}):n \in N \}$ is, of
 course, a Cauchy sequence in  probability $P$. Since $\{\phi(m), m\in M\}$ is a nonincreasing net with respect to $\leq$ on $M$, then it must be a $d_{\varepsilon,\lambda}-$Cauchy net in
$L^{0}(\mathcal{F})$. $\square$

\end{proof}

Theorem 2.6 below is essentially a restatement of a result of \cite{You-Zhu}, here we also give a very simple proof of it, which considerably simplifies and improves the proof given in \cite{You-Zhu}.

\begin{theorem}
 Let $(X,\mathscr{U})$ be a complete Hausdorff  uniform space, $\leq$  a partial ordering
on $X$, and  $\phi: X\rightarrow \bar{L}^{0}(\mathcal{F})$
proper and  bounded from below by $\eta_{0}\in L^{0}(\mathcal{F})$.  Further, if the
following assumptions are  satisfied:\\

\noindent $(1)$ for each $x\in X$, $S(x)=\{y\in X :x\leq y\}$ is closed;\\
\noindent $(2)$ $x\leq y$ $\Rightarrow $ $\phi(y)\leq \phi(x)$;\\
\noindent $(3)$ $G$ is a totally ordered subset in $X$ such that $\{\phi(g),g \in G\}$ is a $d_{\varepsilon,\lambda}-$Cauchy net in
$L^{0}(\mathcal{F})$, then $\{x_{g},g\in G\} $ is a Cauchy net in $X$, where $x_{g}=g$ for any $g\in G$.

 \noindent Then for each
$x_{0}\in dom(\phi)$, there exists $\bar{x}\in dom(\phi)$ such that
$x_{0}\leq \bar{x}$ and $\bar{x}$ is a maximal element in $X$.
\end{theorem}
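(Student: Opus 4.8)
The plan is to apply Zorn's lemma to the section $S(x_0)=\{y\in X:x_0\leq y\}$ with the ordering $\leq$ inherited from $X$, and then to promote a maximal element of $S(x_0)$ to a maximal element of all of $X$. First I would record that every $y\in S(x_0)$ in fact lies in $dom(\phi)$ with $\phi(y)\in L^{0}(\mathcal{F})$: since $x_0\leq y$, assumption $(2)$ together with $x_0\in dom(\phi)$ gives $\phi(y)\leq\phi(x_0)<+\infty$ on $\Omega$, while the lower-bound hypothesis gives $\phi(y)\geq\eta_0>-\infty$. In particular $\phi(M)\subset L^{0}(\mathcal{F})$ for every subset $M$ of $S(x_0)$, which is exactly the condition needed to invoke Lemma 2.5.

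The crux is to verify the hypothesis of Zorn's lemma, namely that every totally ordered subset $M\subset S(x_0)$ has an upper bound in $S(x_0)$. If $M$ is empty, $x_0$ itself serves. Otherwise I regard $M$ as a directed set and $\{x_m:m\in M\}$, with $x_m=m$, as a net in $X$. By the previous paragraph $\phi(M)\subset L^{0}(\mathcal{F})$, so Lemma 2.5 shows that $\{\phi(m):m\in M\}$ is a $d_{\varepsilon,\lambda}$-Cauchy net; assumption $(3)$ then upgrades this to the statement that $\{x_m:m\in M\}$ is a Cauchy net in $X$, and completeness of $(X,\mathscr{U})$ yields a limit $\bar{x}\in X$.

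To finish the upper-bound step I would use the closedness hypothesis $(1)$. Fix $m_0\in M$; for every $m\in M$ with $m\geq m_0$ one has $x_m=m\in S(m_0)$, so the net is eventually in the closed set $S(m_0)$. Since the whole net converges to $\bar{x}$ and $S(m_0)$ is closed, the limit satisfies $\bar{x}\in S(m_0)$, i.e. $m_0\leq\bar{x}$. As $m_0\in M$ was arbitrary, $\bar{x}$ is an upper bound of $M$; choosing any $m\in M$ then gives $x_0\leq m\leq\bar{x}$, so $\bar{x}\in S(x_0)$. Hence Zorn's lemma applies and $S(x_0)$ admits a maximal element $\bar{x}$, which automatically satisfies $x_0\leq\bar{x}$ and, by the first paragraph, $\bar{x}\in dom(\phi)$.

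It remains to promote maximality from $S(x_0)$ to $X$: if $\bar{x}\leq z$ for some $z\in X$, then $x_0\leq\bar{x}\leq z$ forces $z\in S(x_0)$, and maximality of $\bar{x}$ in $S(x_0)$ gives $z=\bar{x}$, so $\bar{x}$ is maximal in $X$. The one place demanding genuine care is the upper-bound construction, since it is precisely there that the three hypotheses must be combined in sequence — the Cauchy-transfer condition $(3)$, completeness of the uniform space, and the closedness of the sections $S(m)$ — with the elementary fact that the limit of a convergent net lying eventually in a closed set belongs to that closed set doing the decisive work.
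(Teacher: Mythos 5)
Your proposal is correct and follows essentially the same route as the paper's own proof: Zorn's lemma applied to $S(x_0)$, with Lemma 2.5 giving the $d_{\varepsilon,\lambda}$-Cauchy property of $\{\phi(m)\}$, hypothesis $(3)$ and completeness producing the limit $\bar{x}$, and the closedness of the sections $S(m_0)$ (via the cofinal/eventually-in-a-closed-set argument, which is exactly the paper's cofinal subnet step) showing $\bar{x}$ is an upper bound. Your write-up is in fact slightly more careful than the paper's, since you make explicit the verification that $\phi(S(x_0))\subset L^{0}(\mathcal{F})$, the empty-chain case, and the promotion of maximality from $S(x_0)$ to $X$.
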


\begin{proof}
Given an arbitrary $x_{0}$ in $dom(\phi)$,  we only need to prove
that there exists a maximal element in the set $S(x_{0})$. For this,
by the Zorn's lemma we must prove that any totally ordered subset
$G$ in the $S(x_{0})$ has an upper bound in $S(x_{0})$.

It is clear that $\phi(G)$ has an upper bound $\phi(x_{0})$, but $\phi$ is bounded from below, and hence $\phi(G)$ is contained in $L^{0}(\mathcal{F})$. By Lemma 2.5, $\{\phi(g),g \in G\}$ is a $d_{\varepsilon,\lambda}-$Cauchy net in
$L^{0}(\mathcal{F})$, then $\{x_{g},g\in G\} $ is a Cauchy net in $X$ by (3), and hence convergent to some $\hat{x}$ in $X$. Further, $\hat{x}$ belongs to $S(x_{0})$ by (1).

We now prove that $\hat{x}$ is an upper bound of $G$. In fact, let $g_{0}$ be any element in $G$, since
 $\{x_{g},g\in G \textmd{ and } g\geq g_{0}\} $ is a cofinal subnet of $\{x_{g},g\in G\} $ and contained in $S(g_{0})$, $\hat{x}\geq g_{0}$ holds.

Finally, $S(x_{0})$ has a maximal element $\bar{x}$, it is clear that $\bar{x}$ is just desired. $\square$

\end{proof}

\begin{remark}

When the probability space $(\Omega,{\cal F},P)$ is trivial, namely ${\cal F}=\{\emptyset,\Omega\}$, an $\bar{L}^{0}(\mathcal{F})$-valued function reduces to be an extended real-valued function and You and Zhu \cite{You-Zhu} proved that Theorem 2.6 also implies Theorem 1 of Br$\phi$ndsted \cite{Brndsted-1974}, which can derive the Bishop-Phelps lemma \cite{B-R}, Ekeland's variational principle \cite{Ekeland-1974} and Caristi's fixed point theorem \cite{Caristi}. In the next section, we will use Theorem 2.6 to establish the Ekeland's variational principle  and Caristi's fixed point theorem on complete $RM$ spaces.

\end{remark}

\subsection{The Ekeland's variational principle on a $d_{\varepsilon,\lambda}$-complete $RM$ space}
 \label{}

 In all the  vector-valued extensions of the Ekeland's variational principle, it is key to properly define the lower semicontinuity for a vector-valued function \cite{Gopfert-Tammer-Za,Finet, Yousuke Araya}. Recently, we have found  that a kind of lower semicontinuity for $\bar{L}^{0}-$valued functions is very suitable for the study of conditional risk measures \cite{Guotx-recent},  Definition 2.8 below is a direct extension of the lower semicontinuity to an $RM $ space.

\begin{definition}
  Let $(E,d)$ be a random
metric space  with base $(\Omega,{\cal F},P)$. A function
$f:E\rightarrow \bar{L}^{0}(\mathcal {F})$ is called {\it{${\cal
T}_{\varepsilon,\lambda}-$lower semicontinuous}} if  $epi(f)$ is closed in $(E,\mathcal{
T}_{\varepsilon,\lambda})\times (L^{0}(\mathcal {F}),\mathcal{
T}_{\varepsilon,\lambda})$. Similarly, a function
$f:E\rightarrow \bar{L}^{0}(\mathcal {F})$ is called {\it{${\cal
T}_{c}-$lower semicontinuous}} if  $epi(f)$ is
closed in $(E,\mathcal{ T}_{c})\times (L^{0}(\mathcal {F}),\mathcal{ T}_{c})$.

\end{definition}

\begin{remark}

The ${\cal T}_{c}-$lower semicontinuity in Definition 2.8 was given in \cite{Filipovic-Kupper} for an $\bar{L}^{0}$-valued function defined on $RN$ modules. In \cite{You-Zhu,Bai-Xiong}, a function $f$ from an $RM$ space $(E,d)$ to $\bar{L}^{0}(\mathcal {F})$ is called lower semicontinuous at $x$ if there exists a subsequence $\{x_{n_{k}}, k\in N\}$ for any sequence $\{x_{n}, n\in N\}$ convergent to $x$ in the ${\cal
T}_{\varepsilon,\lambda}$ such that $f(x)\leq \underline{\lim}_{k} f(x_{n_{k}})$. Obviously, this kind of lower semicontinuity is stronger than the ${\cal T}_{\varepsilon,\lambda}-$lower semicontinuity, and it seems that the latter is more natural.

\end{remark}

Theorem 2.10 below is the Ekeland's variational principle  on $d_{\varepsilon,\lambda}-$complete random metric spaces.

\begin{theorem}

 Let $(E,d)$ be a $d_{\varepsilon,\lambda}-$complete random metric space  with base
$(\Omega,{\cal F},P)$ and  $\phi: E\rightarrow
\bar{L}^{0}(\mathcal{F})$  a proper  $\mathcal
{T}_{\varepsilon,\lambda}-$lower semicontinuous function which is
bounded from below. Then for each $x_{0}\in dom(\phi)$, there exists
$v\in dom(\phi)$ such that the following are satisfied:\\

\noindent$(1)$ $\phi(v)\leq \phi(x_{0})-d(x_{0},v)$;\\
\noindent$(2)$ for each  $x\neq v$ in $E$,  $\phi(x)\nleq \phi(v)-d(x,v)$ holds, namely there exists $ A_{x} \in \mathcal
{F}$ with $P(A_{x})>0$ such that $\phi(x)>\phi(v)-d(x,v)$ on $A_{x}$.

\end{theorem}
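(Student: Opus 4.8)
The plan is to derive Theorem 2.10 from the general ordering principle (Theorem 2.6) by equipping $E$ with the Br{\o}ndsted--Ekeland partial order $\preceq$ defined by $x\preceq y \Leftrightarrow \phi(y)+d(x,y)\leq\phi(x)$ (equivalently $\phi(y)\leq\phi(x)-d(x,y)$). Reflexivity is clear, and transitivity follows by adding the defining inequalities and invoking the triangle inequality ($RM$-3). To avoid the pathologies caused by the value $+\infty$ (where antisymmetry may fail), I would not work on all of $E$ but restrict to $X:=S(x_{0})=\{x\in E:x_{0}\preceq x\}$. Since $x_{0}\in dom(\phi)$ and $x_{0}\preceq x$ forces $\phi(x)\leq\phi(x_{0})-d(x_{0},x)\leq\phi(x_{0})<+\infty$ on $\Omega$, every $x\in X$ lies in $dom(\phi)$, so $\phi|_{X}$ is $L^{0}(\mathcal{F})$-valued, proper and bounded from below; on $X$ the axiom $d(x,y)=0\Leftrightarrow x=y$ then yields antisymmetry, so $\preceq$ is a genuine partial order on $X$.

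Next I would verify the three hypotheses of Theorem 2.6 for $(X,\preceq,\phi|_{X})$. The central step is hypothesis $(1)$: for $x\in X$, the set $S(x)=\{y:\phi(y)+d(x,y)\leq\phi(x)\}$ is $\mathcal{T}_{\varepsilon,\lambda}$-closed. As $\mathcal{T}_{\varepsilon,\lambda}$ is metrizable I would argue with sequences: if $y_{n}\in S(x)$ and $y_{n}\to y$, then the reverse triangle inequality gives $|d(x,y_{n})-d(x,y)|\leq d(y_{n},y)\to 0$ in probability, so $r_{n}:=\phi(x)-d(x,y_{n})\to r:=\phi(x)-d(x,y)$ in $\mathcal{T}_{\varepsilon,\lambda}$; since $(y_{n},r_{n})\in epi(\phi)$ and $epi(\phi)$ is closed by $\mathcal{T}_{\varepsilon,\lambda}$-lower semicontinuity, the limit $(y,r)$ lies in $epi(\phi)$, i.e. $\phi(y)\leq\phi(x)-d(x,y)$, so $y\in S(x)$. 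Taking $x=x_{0}$ shows $X=S(x_{0})$ is closed in the $d_{\varepsilon,\lambda}$-complete space $E$, hence is itself a complete Hausdorff uniform space. Hypothesis $(2)$ is immediate because $d\geq 0$. For hypothesis $(3)$, if $G\subset X$ is totally ordered and $g\preceq g'$, the defining inequality gives $0\leq d(g,g')\leq\phi(g)-\phi(g')=|\phi(g)-\phi(g')|$; hence whenever $\{\phi(g):g\in G\}$ is $d_{\varepsilon,\lambda}$-Cauchy in $L^{0}(\mathcal{F})$, the estimate $P\{d(g,g')<\varepsilon\}\geq P\{|\phi(g)-\phi(g')|<\varepsilon\}$ forces $\{x_{g}:g\in G\}$ to be a $d_{\varepsilon,\lambda}$-Cauchy net in $X$.

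With these verified, Theorem 2.6 applied to $(X,\preceq,\phi|_{X})$ with starting point $x_{0}$ yields a maximal element $v\in X$ with $x_{0}\preceq v$. Conclusion $(1)$ is then exactly the relation $x_{0}\preceq v$, namely $\phi(v)\leq\phi(x_{0})-d(x_{0},v)$. For conclusion $(2)$ I would first upgrade maximality in $X$ to maximality in $E$: if $v\preceq x$ with $x\neq v$, then $x_{0}\preceq v\preceq x$ gives $x\in S(x_{0})=X$, contradicting maximality of $v$ in $X$. Thus for every $x\neq v$ the relation $v\preceq x$ fails, i.e. (using $d(x,v)=d(v,x)$) the inequality $\phi(x)\leq\phi(v)-d(x,v)$ does not hold in $\bar{L}^{0}(\mathcal{F})$; since the order on $\bar{L}^{0}(\mathcal{F})$ is the a.s. pointwise order, the failure of this inequality means precisely $\phi(x)\nleq\phi(v)-d(x,v)$, that is, there exists $A_{x}\in\mathcal{F}$ with $P(A_{x})>0$ on which $\phi(x)>\phi(v)-d(x,v)$, which is conclusion $(2)$.

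I expect the main obstacle to be the careful verification of hypothesis $(1)$, the $\mathcal{T}_{\varepsilon,\lambda}$-closedness of $S(x)$: it is here that the epigraph formulation of lower semicontinuity must be combined with the $(\varepsilon,\lambda)$-continuity of the random metric, and it is here that the restriction to $X\subset dom(\phi)$ is essential, so that $\phi(x)-d(x,y)$ is genuinely $L^{0}(\mathcal{F})$-valued and the pair $(y,r)$ lives in $E\times L^{0}(\mathcal{F})$ where $epi(\phi)$ is defined. A secondary delicate point is the correct reading of the negation $\phi(x)\nleq\phi(v)-d(x,v)$ as a positive-measure statement, which is exactly what gives conclusion $(2)$ its content in the $\bar{L}^{0}$-setting.
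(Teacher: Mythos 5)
Your proposal is correct and is essentially the paper's own proof: both derive the theorem from Theorem 2.6 via the Br{\o}ndsted--Ekeland ordering, with the same epigraph-plus-reverse-triangle-inequality argument for the $\mathcal{T}_{\varepsilon,\lambda}$-closedness of $S(x)$ and the same estimate $d(g,g')\leq|\phi(g)-\phi(g')|$ to verify hypothesis $(3)$. The only cosmetic difference is how the value $+\infty$ is neutralized: the paper keeps $X=E$ but defines the order disjunctively (``$x\leq y$ iff $x=y$, or $x,y\in dom(\phi)$ and $d(x,y)\leq\phi(x)-\phi(y)$'', so points outside $dom(\phi)$ are comparable only to themselves), whereas you restrict to the closed, hence complete, set $S(x_{0})\subset dom(\phi)$ and then upgrade maximality back to $E$ --- an equivalent device.
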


\begin{proof}
Define an ordering $\leq $ on $E$ as follows: $x\leq y$ if and only
if either $x=y$,  or $x$ and $y \in dom(\phi) \textmd{ are  such that } d(x,y) \leq
\phi(x)- \phi(y) $.

It is easy to check that $\leq $ is a partial ordering. We now prove
that $X=E$ and $\phi$ satisfy the hypotheses  of Theorem 2.6 as
follows.

(1). Given an arbitrary  $x$ in $E$, then we now prove $S(x):=\{y\in
E :x\leq y\}$ is $\mathcal {T}_{\varepsilon,\lambda}$-closed. In
fact, first $S(x)=\{x\}$ when $x$ does not belong to $dom(\phi)$,
then when $x$ belongs to $dom(\phi)$, $S(x)=\{y\in dom(\phi): d(x,y)
\leq \phi(x)- \phi(y)\}$, we will prove, at this time, $S(x)=\{y\in
dom(\phi):  d(x,y) \leq \phi(x)- \phi(y)\}$ is $\mathcal
{T}_{\varepsilon,\lambda}-$closd as follows. Since
$(\varepsilon,\lambda)-$topology is metrizable,  let us suppose that
a sequence $\{x_{n}:n \in N\} $  in $ S(x)$ converges in the
$(\varepsilon,\lambda)-$topology to $a$, then  $ d(x,x_{n})\leq
\phi(x)- \phi(x_{n}), \forall n\in N$. Let $r_{n}=
\phi(x)-d(x,x_{n})$, then we have $(x_{n},r_{n}) \in
epi(\phi),\forall n\in N$, further since $\phi$ is $\mathcal
{T}_{\varepsilon,\lambda}-$lower semicontinuous, one can have that
$epi(\phi)$ is closed  in
$(E,\mathcal{T}_{\varepsilon,\lambda})\times (L^{0}(\mathcal
{F}),\mathcal{T}_{\varepsilon,\lambda})$ by definition and since
$\{x_{n},n \in N\}$ converges in the
$(\varepsilon,\lambda)-$topology to $a$, it follows that
$\{r_{n}:n\in N\}$ converges in the $(\varepsilon,\lambda)-$topology
to $\phi(x)-d(x,a)$. Thus one can obtain $(a,\phi(x)-d(x,a))\in
epi(\phi)$, namely $S(x)$ is $\mathcal
{T}_{\varepsilon,\lambda}-$closd.

 (2). By the definition of the ordering $\leq$ on $E$,  it is obvious that  $x\leq y \Rightarrow \phi(y)\leq
\phi(x)$.

(3). Suppose that  $M$ is a totally ordered subset in $E$ such that
$\{\phi(m),m\in M\}$ is a $d_{\varepsilon,\lambda}-$Cauchy net in $L^{0}(\mathcal{F})$, where $M$ is still understood
as the net $\{x_{m},m\in M \}$, where $x_{m}=m$. By the definition of the ordering $\leq$ on $E$, $M=\{x_{m},m \in M\}$ is a
$d_{\varepsilon,\lambda}-$Cauchy net in $E$.

Thus according to  Theorem 2.6,  for each $x_{0}\in dom(\phi)$, there
exists $v\in dom(\phi)$ such that $x_{0}\leq v$ and $v$ is a
maximal element in $E$, which just satisfies our desire. $\square$

\end{proof}

Theorem 2.10  can be easily derived from Theorem 2.11 below by replacing
$E$ with its closed subset $M:=\{x\in E: \phi(x)\leq
\phi(x_{0})-d(x_{0},x) \}$ and taking $\phi|_{M}$ instead of $\phi$, in fact, one can easily see that they are equivalent to each other.

\begin{theorem}
 Let $(E,d)$ be a $d_{\varepsilon,\lambda}-$complete random metric space  with base
$(\Omega,{\cal F},P)$ and  $\phi: E\rightarrow
\bar{L}^{0}(\mathcal{F})$  a proper  $\mathcal
{T}_{\varepsilon,\lambda}-$lower semicontinuous function which is
bounded from below. Then there exists $v\in E$ such that  $\phi(x)\nleq \phi(v)-d(x,v),\forall x\neq v$.

\end{theorem}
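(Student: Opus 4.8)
The plan is to derive Theorem 2.11 as an immediate application of the abstract ordering principle in Theorem 2.6, exactly mirroring the structure already used in the proof of Theorem 2.10. First I would introduce on $E$ the same partial ordering $\leq$: declare $x\leq y$ if and only if either $x=y$, or both $x,y\in dom(\phi)$ and $d(x,y)\leq \phi(x)-\phi(y)$. The three axioms of a partial ordering (reflexivity, antisymmetry, transitivity) are routine to verify; transitivity uses the triangle inequality ($RM$-3), since $d(x,z)\leq d(x,y)+d(y,z)\leq(\phi(x)-\phi(y))+(\phi(y)-\phi(z))=\phi(x)-\phi(z)$, while antisymmetry uses that $d(x,y)\leq\phi(x)-\phi(y)$ together with $d(y,x)\leq\phi(y)-\phi(x)$ forces $d(x,y)=0$, hence $x=y$ by ($RM$-1).

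Next I would check that $\phi$ and $(E,\mathscr{U})$, with $\mathscr{U}$ the complete $d_{\varepsilon,\lambda}$-uniformity, satisfy hypotheses $(1)$, $(2)$, $(3)$ of Theorem 2.6. Condition $(2)$, namely $x\leq y\Rightarrow\phi(y)\leq\phi(x)$, is immediate from the definition of $\leq$. Condition $(1)$, that each $S(x)=\{y\in E:x\leq y\}$ is $\mathcal{T}_{\varepsilon,\lambda}$-closed, is established precisely as in the proof of Theorem 2.10: if $x\notin dom(\phi)$ then $S(x)=\{x\}$ is closed, and if $x\in dom(\phi)$ one takes a sequence $\{x_n\}\subset S(x)$ converging in the metrizable $(\varepsilon,\lambda)$-topology to some $a$, sets $r_n=\phi(x)-d(x,x_n)$ so that $(x_n,r_n)\in epi(\phi)$, and uses $\mathcal{T}_{\varepsilon,\lambda}$-lower semicontinuity (closedness of $epi(\phi)$) together with the continuity of $d(x,\cdot)$ in the $(\varepsilon,\lambda)$-topology to conclude $(a,\phi(x)-d(x,a))\in epi(\phi)$, i.e. $a\in S(x)$. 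Condition $(3)$ follows from the definition of the ordering: if $M$ is a totally ordered subset with $\{\phi(m),m\in M\}$ a $d_{\varepsilon,\lambda}$-Cauchy net in $L^{0}(\mathcal{F})$, then for $m\leq m'$ one has $d(x_m,x_{m'})\leq\phi(x_m)-\phi(x_{m'})=|\phi(x_m)-\phi(x_{m'})|$, so the Cauchyness of $\{\phi(m)\}$ transfers directly to Cauchyness of $\{x_m\}$ in $E$.

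Having verified all hypotheses, I would apply Theorem 2.6 to obtain a maximal element. Concretely, pick any $x_0\in dom(\phi)$ (nonempty since $\phi$ is proper); Theorem 2.6 yields $v\in dom(\phi)$ with $x_0\leq v$ and $v$ maximal in $E$. Maximality means no $y\neq v$ satisfies $v\leq y$, that is, there is no $y\neq v$ in $dom(\phi)$ with $d(v,y)\leq\phi(v)-\phi(y)$. Rewriting this with the roles relabeled (setting $x=y$ and using symmetry $d(x,v)=d(v,x)$ from ($RM$-2)), maximality of $v$ says exactly that for every $x\neq v$ we cannot have $\phi(x)\leq\phi(v)-d(x,v)$ with $x\in dom(\phi)$; and if $x\notin dom(\phi)$ then $\phi(x)=+\infty$ on a set of positive measure so the inequality fails there as well. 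Hence $\phi(x)\nleq\phi(v)-d(x,v)$ for all $x\neq v$, which is the desired conclusion.

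I do not anticipate a genuine obstacle here, since the theorem is engineered to be a corollary of Theorem 2.6; the only point requiring a little care is the faithful translation between the maximality statement of Theorem 2.6 and the negated-inequality conclusion of Theorem 2.11, making sure that the case $x\notin dom(\phi)$ is handled (there the strict inequality holds on a positive-measure set automatically because $\phi(v)-d(x,v)\in L^{0}(\mathcal{F})$ is finite a.s. while $\phi(x)=+\infty$ on a positive-measure set), and that one correctly reads off the pointwise-on-$A_x$ reformulation of $\nleq$ in the complete lattice $\bar{L}^{0}(\mathcal{F})$.
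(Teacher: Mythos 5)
Your proof is correct and follows essentially the same route as the paper: the paper obtains Theorem 2.11 as the maximality part of Theorem 2.10, whose proof applies the ordering principle (Theorem 2.6) with exactly the Bishop--Phelps-type ordering $x\leq y \Leftrightarrow x=y$ or $d(x,y)\leq\phi(x)-\phi(y)$, verifying the same three hypotheses as you do. Your only addition is to spell out the translation of maximality into $\phi(x)\nleq\phi(v)-d(x,v)$, including the case $x\notin dom(\phi)$, which the paper leaves implicit.
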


 Theorem 2.12 below is  the  Caristi's fixed point theorem on $d_{\varepsilon,\lambda}$-complete random
metric spaces. One can prove Theorem 2.12 by Theorem 2.11 and that they are equivalent to each other.

\begin{theorem}
 Let $(E,d)$ be a $d_{\varepsilon,\lambda}-$complete
random metric space  with base $(\Omega,{\cal F},P)$,  $\phi:
E\rightarrow \bar{L}^{0}(\mathcal{F})$  a proper  $\mathcal
{T}_{\varepsilon,\lambda}-$lower semicontinuous function which is
bounded from below, and $T:E \rightarrow E$  a mapping such that $\phi(Tu)+d(Tu,u)\leq\phi(u),\forall u\in E.$ Then $T$ has a fixed
point.

\end{theorem}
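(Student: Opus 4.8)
The plan is to deduce Theorem 2.12 directly from Theorem 2.11, in exactly the way the classical Caristi fixed point theorem is obtained from the Ekeland principle. First I would invoke Theorem 2.11, whose hypotheses ($d_{\varepsilon,\lambda}$-completeness of $E$, properness, $\mathcal{T}_{\varepsilon,\lambda}$-lower semicontinuity and lower boundedness of $\phi$) are precisely those assumed here, to produce a point $v\in E$ with the property
\[
\phi(x)\nleq\phi(v)-d(x,v),\qquad \forall\, x\neq v.
\]
The entire argument then reduces to testing this $v$ against its image $Tv$.

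Next I would feed the Caristi hypothesis $\phi(Tu)+d(Tu,u)\leq\phi(u)$ at the particular point $u=v$, which reads $\phi(Tv)+d(Tv,v)\leq\phi(v)$, and rewrite it as $\phi(Tv)\leq\phi(v)-d(Tv,v)$. Here a little care with the arithmetic in the complete lattice $\bar{L}^{0}(\mathcal{F})$ is needed: since $\phi$ is proper we have $\phi(Tv)>-\infty$ and $\phi(v)>-\infty$ on $\Omega$, and since $d(Tv,v)\in L^{0}_{+}(\mathcal{F})$ is a genuine (finite, nonnegative) element of $L^{0}(\mathcal{F})$, subtracting $d(Tv,v)$ from both sides is legitimate on representatives for almost all $\omega$, including on the set where $\phi(v)=+\infty$. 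Using the symmetry axiom $(RM$-$2)$, $d(Tv,v)=d(v,Tv)$, this yields $\phi(Tv)\leq\phi(v)-d(Tv,v)$ in $\bar{L}^{0}(\mathcal{F})$.

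Finally I would conclude by contradiction. Suppose $Tv\neq v$. Then, setting $x=Tv$ in the conclusion of Theorem 2.11, we obtain $\phi(Tv)\nleq\phi(v)-d(Tv,v)$, which flatly contradicts the inequality established in the previous step. Hence $Tv=v$, so $v$ is a fixed point of $T$ and the proof is complete. I do not anticipate any serious obstacle: the single delicate point is the $\bar{L}^{0}$-valued rearrangement in the middle step, which is exactly where properness of $\phi$ is used to rule out an indeterminate $(+\infty)-(+\infty)$ form. (For completeness one may also observe the converse direction, recovering Theorem 2.11 from Theorem 2.12: if no such $v$ existed, a choice function selecting for each $u$ some $x\neq u$ with $\phi(x)\leq\phi(u)-d(x,u)$ would define a fixed-point-free map $T$ satisfying the Caristi inequality, contradicting Theorem 2.12; thus the two statements are equivalent, as claimed in the text.)
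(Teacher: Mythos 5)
Your proposal is correct and is exactly the derivation the paper intends: the text states that Theorem 2.12 ``can be proved by Theorem 2.11'' without writing out the details, and your argument (take the maximal point $v$ from Theorem 2.11, apply the Caristi inequality at $u=v$ to get $\phi(Tv)\leq\phi(v)-d(Tv,v)$, and conclude $Tv=v$ by contradiction) is that standard route, with the lattice arithmetic in $\bar{L}^{0}(\mathcal{F})$ handled correctly via properness and the finiteness of $d(Tv,v)$. Your sketch of the converse also matches the paper's claim that the two theorems are equivalent.
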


\begin{remark}

Since we employ a weaker and more natural lower semicontinuity than that used in the papers \cite{You-Zhu,Bai-Xiong} and  also allow the function to take values in $\bar{L}^{0}(\mathcal{F})$ unlike the papers \cite{You-Zhu,Bai-Xiong} where only $L^{0}(\mathcal{F})$-valued  functions were considered, our Theorems 2.10, 2.11 and 2.12 improve those in \cite{You-Zhu,Bai-Xiong}.

\end{remark}


\section{The precise forms of the Ekeland's variational principle on a complete $RN$ module under
 two kinds of topologies}
\label{}

 The Ekeland's variational principle for a proper and  lower bounded  extended real-valued function $f$ on a complete metric space $E$ can give the location of the approximate minimal point of $f$, since the following fact always holds: for any given positive real number $\varepsilon$, there exists a point $x_{\varepsilon}$ in $E$ such that $f(x_{\varepsilon})\leq inf f(E)+\varepsilon$. Whereas such a simple fact unnecessarily holds for a proper and  lower bounded $\bar{L}^{0}(\mathcal{F})$-valued function $f$ on a $d_{\varepsilon,\lambda}-$complete $RM$ space, which makes our Theorem 2.10 not able to give the location of the approximate minimal point $v$ of $f$. The weakness of Theorem 2.10 can be overcome in the context of complete $RN$ modules through Theorem 3.5 below. On the other hand, since the $\mathcal {T}_{c}$-topology on $L^{0}(\mathcal{F})$ is too strong to ensure that an a.s. convergent sequence is necessarily convergent in the $\mathcal{T}_{c}$-topology, Theorems 2.10, 2.11 and 2.12 derived from Theorem 2.6 do not have the corresponding version when an $RM$ space is endowed with the $d_{c}$-uniformity, such an unpleasant state of affairs can also be overcome by making use of the relations between the basic results derived from the two kinds of topologies \cite{Guotx-relation}. To sum up, the results obtained under the framework of $RN$ modules overcome all the above shortcomings and thus are also most useful in Section 4 and in the future optimization problems for conditional risk measures.

This section  is devoted to establishing the precise form of Ekeland's variational principle
for lower semicontinuous $\bar{L}^{0}-$valued functions on complete $RN$ modules
 under  two kinds of topologies (namely
$\mathcal {T}_{\varepsilon,\lambda}$ and $\mathcal {T}_{c}$),
namely Theorem 3.6 and 3.10 below.

\begin{definition} [\cite{Guotx-basic}]

 An ordered pair
$(E,\|\cdot\|)$ is called a random normed space (briefly, an $RN$ space) over $K$ with base
$(\Omega,{\cal F},P)$ if $E$ is a linear space and $\|\cdot\|$ is a mapping from $E$ to
$L^{0}_{+}(\mathcal{F})$ such that the following three axioms are
satisfied:\\

\noindent(1) $\|x\|=0$ if and only if $x=\theta$ (the null vector of $E$);\\
\noindent(2) $\|\alpha x\|=|\alpha |\|x\|,\forall \alpha\in K$ and $x\in E$;\\
\noindent(3) $\|x+y\|\leq \|x\|+\|y\|,\forall x,y\in E$,\\

\noindent where the mapping $\|\cdot\|$ is called the random norm
on $E$ and $\|x\|$ is called the random norm of a vector $x\in E$.

In addition, if $E$ is left module over the algebra $L^{0}({\cal F},K)$ such that the following is also satisfied:\\

\noindent(4) $\|\xi x\|=|\xi|\|x\|,\forall\xi\in L^{0}({\cal F},K)$ and $x\in E$,

\noindent then such an $RN$ space is called an $RN$ module over $K$ with base $(\Omega, \mathcal {F},P)$ and such a random norm $\|\cdot\|$ is called an $L^{0}$-norm on $E$.
\end{definition}

 Let $(E,\|\cdot\|)$ be an $RN$ space over $K$ with base $(\Omega, \mathcal {F},P)$, then $E$ is an $RM$ space endowed with the random metric $d: E\times E \rightarrow L^{0}_{+}(\mathcal {F})$ by $d(x,y)=\|x-y\|, \forall x,y\in E$. Throughout this paper, the $(\varepsilon,\lambda)$-topology and $\mathcal {T}_{c}$-topology are always assumed to be those induced by the random metric $d$. Since every $RN$ space uniquely determines a probabilistic normed space (briefly, a $PN$ space) \cite{SS}, in this sense an $RN$ space can be regarded as a special $PN$ space, so the $(\varepsilon,\lambda)$-topology is a metrizable linear topology, please refer to \cite{CBA3,BJC1,BC,CS} for the studies related to the  $(\varepsilon,\lambda)$-topology for a general $PN$ space. In particular,  it is well known from \cite{Guotx-relation} that $(L^{0}(\mathcal {F},K),\mathcal {T}_{\varepsilon,\lambda})$ is a topological algebra over $K$ and  an $RN$ module $(E,\|\cdot\|)$ over $K$ with base $(\Omega, \mathcal {F},P)$ is a topological module over the topological algebra $(L^{0}(\mathcal {F},K),\mathcal {T}_{\varepsilon,\lambda})$ when $E$ is endowed with its $(\varepsilon,\lambda)$-topology. On the other hand, the  $\mathcal {T}_{c}$-topology for an $RN$ module is just the locally $L^{0}$-convex topology, in particular, $(L^{0}(\mathcal {F},K),\mathcal {T}_{c})$ is only a topological ring  and  an $RN$ module $(E,\|\cdot\|)$ over $K$ with base $(\Omega, \mathcal {F},P)$ is a topological module over the topological ring $(L^{0}(\mathcal {F},K),\mathcal {T}_{c})$ when $E$ is endowed with its locally $L^{0}$-convex topology, see \cite{Filipovic-Kupper} for details.

 Let $(E,\|\cdot\|)$ be an $RN$ module over $K$ with base $(\Omega, \mathcal {F},P)$,  $p_{A}=\tilde{I}_{A}\cdot p$
is called the $A-$stratification of $p$ for each given $A\in \mathcal {F}$ and $p$ in $E$. The so-called stratification structure of $E$ means that $E$ includes every stratification of an element in $E$.  Clearly, $p_{A}=\theta$ when
$P(A)=0$ and $p_{A}=p$ when $P(\Omega \setminus A)=0$, which  are both called trivial stratifications of $p$. Further, when $(\Omega,\mathcal {F},P)$ is trivial probability space every element in $E$ has merely the two trivial stratifications since $\mathcal{F}=\{\Omega,\emptyset\}$; when $(\Omega, \mathcal {F},P)$ is arbitrary, every element in $E$ can possess arbitrarily many nontrivial intermediate stratifications. It is this kind of rich stratification structure of $RN$ modules that makes the theory of $RN$ modules deeply developed and also become the most useful part of  random metric theory.

To introduce the main results of this paper, let us first recall:

\begin{definition} [\cite{Guotx-relation}]

 Let $E$ be a
left module over the algebra $L^0(\mathcal {F}, K)$. A formal sum
$\sum_{n \in N}\widetilde{I}_{A_n}x_n$ is called a \emph{{countable
concatenation}} of a sequence $\{x_n\mid n\in N\}$ in $E$ with
respect to a countable partition $\{A_n\mid n\in N\}$  of $\Omega$
to $\mathcal {F}$. Moreover,  a countable concatenation $\sum_{n \in
N}\widetilde{I}_{A_n}x_n$ is well defined or $\sum_{n \in
N}\widetilde{I}_{A_n}x_n\in E$ if there is $x\in E$ such that
$\widetilde{I}_{A_n}x=\widetilde{I}_{A_n}x_n, \forall n\in N$.  A
subset $G$ of $E$ is said to
 \emph {have the countable concatenation property} if
every countable concatenation $\sum_{n \in N}\tilde{I}_{A_n}x_n$ with $x_n\in
 G$ for each $n\in N$ still belongs to $G$, namely $\sum_{n  \in N}\tilde{I}_{A_n}x_n$ is well defined and there exists $x\in G$
 such that $x=\sum_{n \in N}\tilde{I}_{A_n}x_n$.
\end{definition}

\begin{definition}[\cite{Filipovic-Kupper}]
 Let $E$ be a left module over the algebra
$L^{0}(\mathcal {F})$ and  $f$ a function from $E$ to $\bar {L}^{0}(\mathcal{F})$, then \\

\noindent(1) $f$ is $L^{0}(\mathcal {F})$-convex if $f(\xi x+(1-\xi)y)\leq \xi
f(x)+(1-\xi)f(y)$ for all $x$ and $y$ in $E$ and $\xi \in L_{+}^{0}(\mathcal{F})$
such that $0\leq \xi \leq 1$ (Here we make the convention that
$0\cdot (\pm \infty)=0$ and $\infty -\infty =\infty~!)$.\\
\noindent(2) $f$ is said to have the local property if
$\tilde{I}_Af(x)=\tilde{I}_Af(\tilde{I}_{A}x)$ for all $x\in E$ and
$A\in \cal F$.
\end{definition}

It is well known from \cite{Filipovic-Kupper} that
 $f:E \rightarrow \bar{L}^{0}(\mathcal {F})$ is $L^{0}(\mathcal {F})$-convex iff $f$ has the local property and
$epi(f)$ is $L^{0}(\mathcal {F})$-convex.

\begin{lemma}

 Let $E$ be an $RN$ module over $R$ with base
$(\Omega,{\cal F},P)$, $G\subset E$ a subset such that $ \tilde{I}_{A} G+\tilde{I}_{A^{c}} G \subset G$ and $f:E\rightarrow
\bar{L}^{0}(\mathcal{F})$ a function  with the local property. Then
$\{f(x):x\in G\}$ is both directed downwards and directed upwards.

\end{lemma}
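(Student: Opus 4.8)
The plan is to prove both directedness assertions by one explicit gluing (concatenation) construction that exploits the hypothesis on $G$ together with the local property of $f$; no completeness or topological input will be needed. Fix $x,y\in G$. To witness downward directedness, I would set $A=[f(x)\leq f(y)]\in\mathcal{F}$ (the equivalence class of $\{\omega: f(x)(\omega)\leq f(y)(\omega)\}$) and define $z=\tilde{I}_A x+\tilde{I}_{A^c}y$. The assumption $\tilde{I}_A G+\tilde{I}_{A^c}G\subset G$ is precisely what guarantees $z\in G$, so $f(z)$ is a legitimate member of $\{f(w):w\in G\}$.

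Next I would compute $f(z)$ using the local property. Since $\tilde{I}_A\tilde{I}_{A^c}=0$, one has $\tilde{I}_A z=\tilde{I}_A x$ and $\tilde{I}_{A^c} z=\tilde{I}_{A^c} y$. Applying the local property twice then gives $\tilde{I}_A f(z)=\tilde{I}_A f(\tilde{I}_A z)=\tilde{I}_A f(\tilde{I}_A x)=\tilde{I}_A f(x)$, and likewise $\tilde{I}_{A^c} f(z)=\tilde{I}_{A^c} f(y)$. Summing and using $\tilde{I}_A+\tilde{I}_{A^c}=1$ yields $f(z)=\tilde{I}_A f(x)+\tilde{I}_{A^c} f(y)$. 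By the very choice of $A$, on $A$ one has $f(x)\leq f(y)$ and on $A^c$ one has $f(y)<f(x)$, so the right-hand side is exactly the pointwise (a.s.) minimum $f(x)\wedge f(y)$. In particular $f(z)\leq f(x)$ and $f(z)\leq f(y)$, which shows $\{f(w):w\in G\}$ is directed downwards.

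For upward directedness I would run the symmetric argument: replacing $A$ by $A'=[f(x)\geq f(y)]$ and setting $z'=\tilde{I}_{A'}x+\tilde{I}_{(A')^c}y\in G$, the identical computation gives $f(z')=\tilde{I}_{A'}f(x)+\tilde{I}_{(A')^c}f(y)=f(x)\vee f(y)$, which dominates both $f(x)$ and $f(y)$. This completes the argument.

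The one point deserving care — and the step I would treat most carefully — is the bookkeeping with $\bar{L}^{0}(\mathcal{F})$-values, since $f(x)$ and $f(y)$ may equal $+\infty$. Every product such as $\tilde{I}_A f(x)$ must be read through the convention $0\cdot(\pm\infty)=0$, so that the decomposition $f(z)=\tilde{I}_A f(z)+\tilde{I}_{A^c} f(z)$ is valid and no $\infty-\infty$ ever arises (on $A$ the term $\tilde{I}_{A^c}f(z)$ vanishes, and on $A^c$ the term $\tilde{I}_A f(z)$ vanishes). With that convention the identification of $\tilde{I}_A f(x)+\tilde{I}_{A^c} f(y)$ with $f(x)\wedge f(y)$ is checked representative-wise on $\Omega$, and the two applications of the local property are then immediate, so the lemma follows.
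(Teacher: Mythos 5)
Your proof is correct and takes essentially the same route as the paper's: the paper likewise sets $A=\{\omega: f^{0}(x)(\omega)\leq f^{0}(y)(\omega)\}$, forms $z=\tilde{I}_{A}x+\tilde{I}_{A^{c}}y\in G$, and concludes $f(z)=f(x)\wedge f(y)$ from the local property, with the upward case handled symmetrically. Your write-up simply supplies the stratification computation and the $0\cdot(\pm\infty)=0$ bookkeeping for $\bar{L}^{0}(\mathcal{F})$-values that the paper leaves as ``easy to check.''
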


\begin{proof}
Let  $x$ and $y$ be any two elements in $G$ and $f^{0}(x)$ and
$f^{0}(y)$ arbitrarily chosen representatives of $f(x)$ and
$f(y)$, respectively.

Take $A=\{\omega \in \Omega:f^{0}(x)(\omega)\leq f^{0}(y)(\omega)\}$ and $z_{1}=\tilde{I}_{A} \cdot
x+\tilde{I}_{A^{c}} \cdot y$, then  $z_{1} \in G$ and it is easy
to check that $f(z_{1})=f(x)\bigwedge f(y)$ by the local property of
$f$ and hence $\{f(x):x\in G\}$ is directed downwards. Similarly, one can prove that $\{f(x):x\in G\}$ is  directed upwards. $\square$

\end{proof}

Let $(E,\|\cdot\|)$ be an $RN$ module over $R$
with base $(\Omega,{\cal F},P)$ and $G$ a subset of $E$. Since $G$
is an $RM$ space, as a subspace of the $RM$ space $(E,\|\cdot\|)$,
then we can say that $f:G \rightarrow \bar{L}^{0}(\mathcal{F})$ is
proper, $\mathcal{T}_{\varepsilon,\lambda}-$lower semicontinuous and
$\mathcal{T}_{c}-$lower semicontinuous in the sense of Section 2.

\begin{theorem}

Let $(E,\|\cdot\|)$ be an $RN$ module over $R$ with base
$(\Omega,{\cal F},P)$, $G\subset E$ a subset with   the countable
concatenation property and   $f:E\rightarrow
\bar{L}^{0}(\mathcal{F})$ have the local property. If $f|_{G}$ is
proper and bounded from below on $G$ $($resp.,  bounded from above on
$G$$)$. Then for each $\varepsilon \in L^{0}_{++}(\mathcal {F})$,
there exists $x_{\varepsilon} \in G$ such that
$f(x_{\varepsilon})\leq \bigwedge f(G)+\varepsilon$ (accordingly,
$f(x_{\varepsilon})\geq \bigvee f(G)-\varepsilon$).
\end{theorem}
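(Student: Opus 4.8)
The plan is to exploit the rich stratification structure of the $RN$ module together with the local property of $f$ to convert the infimum $\bigwedge f(G)$, which is a priori only a lattice-theoretic supremum/infimum in $\bar{L}^{0}(\mathcal{F})$, into a quantity attained up to $\varepsilon$ by a single element of $G$. The key structural input is Lemma 3.4: since $G$ has the countable concatenation property, it in particular satisfies $\tilde{I}_{A}G+\tilde{I}_{A^{c}}G\subset G$, so Lemma 3.4 applies and $\{f(x):x\in G\}$ is directed downwards with respect to $\leq$. This is the crucial bridge, because a directed-downwards family in the complete lattice $L^{0}(\mathcal{F})$ (note $f|_{G}$ is bounded below, so $\bigwedge f(G)\in L^{0}(\mathcal{F})$) can have its infimum approximated along a \emph{monotone sequence}, not merely a net.

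First I would set $\eta:=\bigwedge f(G)\in L^{0}(\mathcal{F})$ and invoke Proposition 2.1: since $\{f(x):x\in G\}$ is directed downwards, there is a sequence $\{x_{n}\mid n\in N\}\subset G$ with $\{f(x_{n})\mid n\in N\}$ nonincreasing and $\wedge_{n\geqslant 1}f(x_{n})=\eta$. The heart of the argument is then a concatenation-gluing step: I would partition $\Omega$ into the disjoint $\mathcal{F}$-sets on which $f(x_{n})$ first drops within $\varepsilon$ of $\eta$. Concretely, set $B_{1}=[f(x_{1})\leq \eta+\varepsilon]$ and, inductively, $B_{n}=[f(x_{n})\leq\eta+\varepsilon]\setminus\bigcup_{k<n}B_{k}$. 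Because $\{f(x_{n})\}$ decreases to $\eta$ pointwise (a.s.) and $\varepsilon\in L^{0}_{++}(\mathcal{F})$, every $\omega$ eventually enters some $B_{n}$, so $\{B_{n}\mid n\in N\}$ is (up to a null set) a countable partition of $\Omega$.

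Next I would define $x_{\varepsilon}:=\sum_{n\in N}\tilde{I}_{B_{n}}x_{n}$. The countable concatenation property of $G$ guarantees that this formal sum is well defined and lies in $G$. Using the local property of $f$, one has $\tilde{I}_{B_{n}}f(x_{\varepsilon})=\tilde{I}_{B_{n}}f(\tilde{I}_{B_{n}}x_{\varepsilon})=\tilde{I}_{B_{n}}f(\tilde{I}_{B_{n}}x_{n})=\tilde{I}_{B_{n}}f(x_{n})$ for each $n$, so on each $B_{n}$ we have $f(x_{\varepsilon})=f(x_{n})\leq\eta+\varepsilon$ by construction of $B_{n}$. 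Summing over the partition yields $f(x_{\varepsilon})\leq\eta+\varepsilon=\bigwedge f(G)+\varepsilon$ on all of $\Omega$, as required. The bounded-above case is entirely dual: apply Lemma 3.4 to get directedness upwards, use Proposition 2.1 for a nondecreasing sequence approaching $\bigvee f(G)$, and glue analogously.

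The step I expect to be the main obstacle is verifying carefully that $\{B_{n}\}$ is genuinely a partition of $\Omega$ up to a null set and that the local-property computation $\tilde{I}_{B_{n}}f(x_{\varepsilon})=\tilde{I}_{B_{n}}f(x_{n})$ is legitimate for the concatenated element; both hinge on manipulating representatives cleanly and on the convention $0\cdot(\pm\infty)=0$ so that the stratification identities hold even where $f(x_{n})=+\infty$. Once the partition and the local-property identity are pinned down, the inequality $f(x_{\varepsilon})\leq\bigwedge f(G)+\varepsilon$ is immediate.
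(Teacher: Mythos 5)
Your proposal is correct, and it reaches the conclusion by a genuinely more direct route than the paper. Both arguments share the same skeleton: Lemma 3.4 (via $\tilde{I}_{A}G+\tilde{I}_{A^{c}}G\subset G$) gives downward directedness of $f(G)$, Proposition 2.1 gives a sequence $\{x_{n}\}$ with $f(x_{n})$ nonincreasing to $\eta=\bigwedge f(G)\in L^{0}(\mathcal{F})$, and the countable concatenation property plus the local property are used to glue. Where you diverge is the middle: the paper invokes Egoroff's theorem to get $P$-uniform convergence of $f(x_{n})$ to $\eta$ on sets $E_{m}$ exhausting $\Omega$, builds a partition $\{F_{m}\}$ from these, performs a \emph{first} concatenation to produce, for each integer $k$, an element $x^{(k)}\in G$ with $f(x^{(k)})\leq\eta+\frac{1}{k}$, and only then handles the random $\varepsilon$ by stratifying it into the sets $A_{k}$ where $\frac{1}{k+1}\leq\varepsilon^{0}<\frac{1}{k}$ and gluing the $x^{(k)}$ in a \emph{second} concatenation. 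You instead fold $\varepsilon$ into the partition from the start: the first-entry sets $B_{n}=[f(x_{n})\leq\eta+\varepsilon]\setminus\bigcup_{k<n}B_{k}$ exhaust $\Omega$ up to a null set simply because the nonincreasing a.s.\ convergence $f^{0}(x_{n})\downarrow\eta^{0}$ together with $\varepsilon^{0}>0$ a.s.\ forces eventual entry, so a single concatenation $x_{\varepsilon}=\sum_{n}\tilde{I}_{B_{n}}x_{n}$ and the local-property identity $\tilde{I}_{B_{n}}f(x_{\varepsilon})=\tilde{I}_{B_{n}}f(x_{n})\leq\tilde{I}_{B_{n}}(\eta+\varepsilon)$ finish the proof. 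Your version avoids Egoroff entirely and one of the two gluing stages, which is a real simplification; what the paper's longer route buys is the intermediate family $\{x^{(k)}\}$ of approximate minimizers at deterministic tolerances $\frac{1}{k}$, a byproduct that could be reused elsewhere but is not needed for the statement itself. The two points you flag as delicate — that $\{B_{n}\}$ is a partition modulo null sets, and that the stratification identities survive where $f(x_{n})=+\infty$ under the convention $0\cdot(\pm\infty)=0$ — are handled exactly as you indicate, and properness of $f|_{G}$ together with lower boundedness guarantees $\eta\in L^{0}(\mathcal{F})$, so the comparison $f(x_{n})\leq\eta+\varepsilon$ is meaningful; the dual bounded-above case goes through verbatim.
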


\begin{proof}
We only need to prove the case when $f$ is bounded from below on $G$ as
follows.

Since $G$ has the countable concatenation property, $G$ must satisfy the property that $ \tilde{I}_{A} G+\tilde{I}_{A^{c}} G \subset G$. Further, since $f$ has the local property, $\{f(x):x\in
G\}$ is directed downwards by Lemma 3.4. According to Proposition
2.1, there exists a sequence $\{x_{n},n\in N\}$ in $G$ such that
$\{f(x_{n}),n\in N\}$  converges to $ \eta:=\bigwedge f(G)$ in a nonincreasing way, then  it follows from Egoroff's Theorem that $\{f(x_{n}),n\in N\}$ converges $P-$uniformly to $\eta$. Thus there exists $E_{m}\in \mathcal {F}$ for
each $m\in N$ such that
$P(\Omega\setminus E_{m})< \frac{1}{m} $ and $\{f(x_{n}),n\in N\}$
converges uniformly to $\eta$ on $E_{m}$, which is denoted by
$f(x_{n})\rightrightarrows \eta $  on $E_{m}$ for convenience.

Since $P(\bigcup^{\infty}_{n=1}E_{n})=1$, we can suppose $\Omega=\bigcup^{\infty}_{n=1}E_{n}$. Further, let $E_{n}'=\bigcup_{k=1}^{n} E_{k}, \forall n\in N$, then $\bigcup^{\infty}_{m=1}E_{m}'=\bigcup^{\infty}_{m=1}E_{m}=\Omega$ and $E_{m}'\subset E_{m+1}', \forall m\in N$.

Taking $F_{1}=E_{1}', F_{n}=E_{n}'\setminus \bigcup^{n-1}_{k=1}E_{k}', \forall n\geq 2$, one
can have $F_{i}\bigcap  F_{j}=\emptyset ( i\neq j)$ and $\bigcup^{\infty}_{n=1}F_{n}=\Omega$.

First, we prove that for each $k\in N$ there exists $x^{(k)}\in
G$ such that $f(x^{(k)})\leq \eta + \frac{1}{k}$ as follows. Let
$f^{0}(x_{n})$ and $\eta ^{0}$ be arbitrarily chosen
representatives of $f(x_{n})$ and $\eta$, respectively. From
$f(x_{n})\rightrightarrows \eta $  on $F_{m},\forall m\in N$ , it
follows that for each $k \in N $, there exists $N(k,m)\in N$
such that $|f^{0}(x_{n})(\omega)-\eta ^{0}(\omega)|\leq \frac{1}{k},\forall
\omega \in F_{m}  \mbox{ and }  n\geq N(k,m)$, and hence $f(x_{n})\leq
\eta +\frac{1}{k}$ on $F_{m}$,$\forall n\geq N(k,m)$.

 By the hypothesis that $G$ has the countable concatenation property, one can
 have $x^{(k)}:=\sum ^{\infty}_{m=1}\tilde{I}_{F_{m}}\cdot x_{N(k,m)}\in G$ is well defined and
 $\tilde{I}_{F_{m}}\cdot x^{(k)}=\tilde{I}_{F_{m}}\cdot x_{N(k,m)},\forall  m\in N$. Hence one can have
 $\tilde{I}_{F_{m}}\cdot f(\tilde{I}_{F_{m}}\cdot x^{(k)})=\tilde{I}_{F_{m}}\cdot f(\tilde{I}_{F_{m}}\cdot x_{N(k,m)})$, which
 implies $\tilde{I}_{F_{m}}\cdot f(x^{(k)})=\tilde{I}_{F_{m}}\cdot  f(x_{N(k,m)}) \leq \tilde{I}_{F_{m}}\cdot (\eta +\frac{1}{k}),\forall  m\in N$ by
  the local property of $f$. Since $\bigcup^{\infty}_{n=1}F_{n}=\Omega$, we have  $f(x^{(k)})\leq \eta +\frac{1}{k}$.

 Second, for each $\varepsilon \in  L^{0}_{++}(\mathcal {F})$, let $A_{1}=\{\omega :\varepsilon ^{0}(\omega)\geq 1\}$,  $A_{k+1}=\{\omega:\frac{1}{k+1} \leq \varepsilon ^{0} (\omega)<\frac{1}{k}\},\forall k\geq 1 $, where $\varepsilon ^{0}$ is an
 arbitrarily chosen representative of $\varepsilon$. Then  $\{ A_{i}, i\geq 1 \}$ forms a countable partition of $\Omega$ to $\mathcal {F}$. It
 is easy to see that $f(x^{(k)})\leq \eta +\frac{1}{k}\leq \eta +\varepsilon$ on $A_{k}$, $\forall k\geq 1 $.

From the countable concatenation property of $G$, it follows that
$x_{\varepsilon}:=\sum ^{\infty}_{k=1}\tilde{I}_{A_{k}}\cdot x^{(k)} \in G$ is
well defined. Further, by the local property of $f$, it is obvious
that $\tilde{I}_{A_{k}}f(x_{\varepsilon})=\tilde{I}_{A_{k}}f(\tilde{I}_{A_{k}}
\cdot x_{\varepsilon}) =\tilde{I}_{A_{k}}f(\tilde{I}_{A_{k}} \cdot
x^{(k)})=\tilde{I}_{A_{k}}f( x^{(k)}) \leq \tilde{I}_{A_{k}} \cdot
(\eta +\varepsilon)$, $\forall k\geq 1$. Since
$\bigcup^{\infty}_{k=1}A_{k}=\Omega$, we have $f(x_{\varepsilon})\leq \eta
+\varepsilon.$

Similarly, we can prove this theorem when  $f$ is bounded from above
on $G$. $\square$

\end{proof}

 According to Theorem 3.5, there does exist $x_{0}$ satisfying the
 hypothesis of Theorem 3.6 below if $\varphi$ has the local property and $G$ has the countable
concatenation property. By Theorem 2.10, one can obtain the following precise form of Ekeland's variational principle on a $\mathcal{T}_{\varepsilon,\lambda}-$complete RN module:

\begin{theorem}
 Let $(E,\|\cdot\|)$ be a $\mathcal
{T}_{\varepsilon,\lambda}-$complete RN module over $R$ with base
$(\Omega,{\cal F},P)$, $G$ a $\mathcal
{T}_{\varepsilon,\lambda}-$closed subset of $E$, $ \varepsilon \in
L^{0}_{++}(\mathcal {F})$ and  $\varphi: G \rightarrow
\bar{L}^{0}(\mathcal{F})$ a proper,
$\mathcal{T}_{\varepsilon,\lambda}-$lower semicontinuous and bounded
from below on $G$. Then for each point $x_{0}\in G$ satisfying
$\varphi(x_{0})\leq \bigwedge \varphi(G)+\varepsilon$ and each $
\alpha \in L^{0}_{++}(\mathcal {F})$, there exists $z\in G$ such
that the following are satisfied:\\

\noindent $(1)$ $\varphi(z)  \leq \varphi(x_{0})-\alpha \|z-x_{0}\| $;\\
\noindent $(2)$ $\|z-x_{0}\| \leq \alpha^{-1}\cdot \varepsilon$;\\
\noindent $(3)$ for each $x \in G$ such that $x\neq z$,  $\varphi(x)\nleq
\varphi(z)-\alpha\|x-z\|$.

\end{theorem}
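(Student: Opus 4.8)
The plan is to reduce Theorem 3.6 to the general principle Theorem 2.10 by passing to a rescaled random metric. First I would introduce on $G$ the mapping $d_{\alpha}(x,y)=\alpha\|x-y\|$ for $x,y\in G$. Since $\alpha\in L^{0}_{++}(\mathcal{F})$, one checks that $d_{\alpha}$ satisfies the three axioms of an $RM$ space: positivity together with the identity of indiscernibles follows from $\alpha>0$ on $\Omega$ and $\|x-y\|=0\Leftrightarrow x=y$; symmetry is clear; and the triangle inequality follows by multiplying that of $\|\cdot\|$ by the nonnegative $\alpha$. Thus $(G,d_{\alpha})$ is again an $RM$ space with base $(\Omega,\mathcal{F},P)$.

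The crucial preliminary step is to verify that $(G,d_{\alpha})$ is $d_{\varepsilon,\lambda}$-complete and that $\varphi$ remains proper, $\mathcal{T}_{\varepsilon,\lambda}$-lower semicontinuous and bounded from below on it. Here the key observation is that multiplication by $\alpha\in L^{0}_{++}(\mathcal{F})$ preserves convergence in probability: if $\xi_{n}\to 0$ in probability then $\alpha\xi_{n}\to 0$ in probability, and conversely $\xi_{n}=\alpha^{-1}(\alpha\xi_{n})\to 0$ since $\alpha^{-1}$ is a.s.\ finite. A truncation argument---choosing $M$ with $P\{\alpha>M\}$ small and splitting the event $\{\alpha\|x-y\|>\delta\}$ accordingly---extends this to $d_{\varepsilon,\lambda}$-Cauchy nets, so $d_{\alpha}$ and $\|\cdot\|$ induce the same $(\varepsilon,\lambda)$-topology and the same Cauchy nets on $G$. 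Consequently, since $G$ is $\mathcal{T}_{\varepsilon,\lambda}$-closed in the complete $RN$ module $E$, it is $d_{\varepsilon,\lambda}$-complete for $d_{\alpha}$; moreover $epi(\varphi)$ is unchanged as a set and the product topology in which it lives is unchanged, so lower semicontinuity persists. I expect this step to be the main obstacle, precisely because $\alpha$ may be unbounded, so one cannot simply invoke a Lipschitz equivalence of the two metrics and must argue through convergence in probability.

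With these verifications in hand, I would apply Theorem 2.10 to the $d_{\varepsilon,\lambda}$-complete $RM$ space $(G,d_{\alpha})$, the function $\varphi$, and the starting point $x_{0}$, which lies in $dom(\varphi)$ because $\varphi(x_{0})\leq\bigwedge\varphi(G)+\varepsilon<+\infty$ on $\Omega$. Theorem 2.10 then yields a point $z\in dom(\varphi)$ with $\varphi(z)\leq\varphi(x_{0})-d_{\alpha}(x_{0},z)=\varphi(x_{0})-\alpha\|z-x_{0}\|$, which is exactly conclusion $(1)$, and such that for every $x\neq z$ in $G$ one has $\varphi(x)\nleq\varphi(z)-d_{\alpha}(x,z)=\varphi(z)-\alpha\|x-z\|$, which is exactly conclusion $(3)$.

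Finally, conclusion $(2)$ would be extracted from $(1)$ by an order estimate. From $(1)$ we get $\alpha\|z-x_{0}\|\leq\varphi(x_{0})-\varphi(z)$ in $L^{0}(\mathcal{F})$. Since $z\in G$ gives $\varphi(z)\geq\bigwedge\varphi(G)$ while the hypothesis gives $\varphi(x_{0})\leq\bigwedge\varphi(G)+\varepsilon$, it follows that $\alpha\|z-x_{0}\|\leq\varepsilon$. Multiplying by the nonnegative $\alpha^{-1}$ and using $\alpha^{-1}\alpha=I_{[\alpha\neq 0]}=1$ (because $\alpha>0$ on $\Omega$) yields $\|z-x_{0}\|\leq\alpha^{-1}\varepsilon$, establishing $(2)$ and completing the proof.
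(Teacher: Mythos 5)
Your proof is correct and takes essentially the same route as the paper, which deduces Theorem 3.6 directly from Theorem 2.10 applied to the $\mathcal{T}_{\varepsilon,\lambda}$-closed (hence complete) set $G$ equipped with the rescaled random metric $\alpha\|x-y\|$, with conclusion $(2)$ then following from $(1)$ together with $\varphi(z)\geq\bigwedge\varphi(G)$ and $\varphi(x_{0})\leq\bigwedge\varphi(G)+\varepsilon$. The paper leaves the verifications implicit, and your truncation argument showing that $\|\cdot\|$ and $\alpha\|\cdot\|$ induce the same $(\varepsilon,\lambda)$-uniformity (so completeness and lower semicontinuity persist despite $\alpha$ being possibly unbounded) supplies exactly the details the paper's one-line derivation presupposes.
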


To obtain the precise form of Ekeland's variational principle under the  locally $L^{0}$-convex topology, we need the following key results obtained in \cite{Guotx-relation,Guotx-recent}:

\begin{proposition}  [\cite{Guotx-relation}] Let
$(E,\|\cdot\|)$ be an $RN$ module over $K$ with base $(\Omega,{\cal
F},P)$. Then  $E$ is ${\cal
T}_{\varepsilon,\lambda}-$complete if and only if $E$ is ${\cal
T}_{c}-$complete and has the countable concatenation property.

\end{proposition}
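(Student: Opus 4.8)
The plan is to prove the two implications separately, exploiting throughout two standard facts: the $\mathcal{T}_{c}$-uniformity is finer than the $d_{\varepsilon,\lambda}$-uniformity (since $\{(p,q):\|p-q\|\leq\varepsilon'\}\subset U(\varepsilon',\lambda)$ for every real $\varepsilon'>0$), and $\mathcal{T}_{\varepsilon,\lambda}$ is metrizable, so $\mathcal{T}_{\varepsilon,\lambda}$-completeness may be tested on sequences whereas $\mathcal{T}_{c}$-completeness must be tested on nets. I shall also use repeatedly that the random norm, the module operations and the multiplications $x\mapsto\tilde{I}_{A}x$ are $\mathcal{T}_{\varepsilon,\lambda}$-continuous, and that each order interval $\{\xi\in L^{0}(\mathcal{F}):\xi\leq\varepsilon\}$ is $\mathcal{T}_{\varepsilon,\lambda}$-closed (a limit in probability of terms bounded above by $\varepsilon$ is still bounded above by $\varepsilon$, by passing to an a.s. convergent subsequence).

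For the implication ``$\mathcal{T}_{\varepsilon,\lambda}$-complete $\Rightarrow$ $\mathcal{T}_{c}$-complete and countable concatenation property'', I would proceed as follows. For $\mathcal{T}_{c}$-completeness, take a $\mathcal{T}_{c}$-Cauchy net $\{x_{\alpha}\}$; being finer, $\mathcal{T}_{c}$-Cauchy implies $d_{\varepsilon,\lambda}$-Cauchy, so $\{x_{\alpha}\}$ $\mathcal{T}_{\varepsilon,\lambda}$-converges to some $x\in E$. To upgrade this, fix $\varepsilon\in L^{0}_{++}(\mathcal{F})$ and choose $\alpha_{0}$ with $\|x_{\alpha}-x_{\beta}\|\leq\varepsilon$ for $\alpha,\beta\succeq\alpha_{0}$; letting $\beta$ run through the cofinal (still $\mathcal{T}_{\varepsilon,\lambda}$-convergent) subnet, $\|x_{\alpha}-x_{\beta}\|\to\|x_{\alpha}-x\|$ in probability, and closedness of the order interval gives $\|x_{\alpha}-x\|\leq\varepsilon$ for every $\alpha\succeq\alpha_{0}$, i.e. $x_{\alpha}\to x$ in $\mathcal{T}_{c}$. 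For the countable concatenation property, given a partition $\{A_{n}\}$ of $\Omega$ and $\{x_{n}\}\subset E$, the partial sums $s_{N}=\sum_{n=1}^{N}\tilde{I}_{A_{n}}x_{n}$ satisfy $\|s_{N}-s_{M}\|=\sum_{n=M+1}^{N}\tilde{I}_{A_{n}}\|x_{n}\|$, supported on $\bigcup_{n>M}A_{n}$ and hence $\mathcal{T}_{\varepsilon,\lambda}$-Cauchy; its limit $x\in E$ satisfies $\tilde{I}_{A_{n}}x=\tilde{I}_{A_{n}}x_{n}$ by continuity of $x\mapsto\tilde{I}_{A_{n}}x$, so $\sum_{n}\tilde{I}_{A_{n}}x_{n}$ is well defined in $E$.

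The converse ``$\mathcal{T}_{c}$-complete and countable concatenation $\Rightarrow$ $\mathcal{T}_{\varepsilon,\lambda}$-complete'' is where the real work lies, and here the naive attempt fails: a $\mathcal{T}_{\varepsilon,\lambda}$-Cauchy sequence is typically not $\mathcal{T}_{c}$-Cauchy (almost sure control is far weaker than $\mathcal{T}_{c}$-control), so one cannot invoke $\mathcal{T}_{c}$-completeness on the sequence. Instead I would manufacture a $\mathcal{T}_{c}$-Cauchy \emph{net} by concatenation. Starting from a $\mathcal{T}_{\varepsilon,\lambda}$-Cauchy sequence, extract a subsequence $\{y_{k}\}$ with $P\{\|y_{k+1}-y_{k}\|>2^{-k}\}<2^{-k}$, so that $\widehat{R}_{k}:=\sum_{i\geq k}\|y_{i+1}-y_{i}\|$ decreases to $0$ a.s. and dominates $\|y_{l}-y_{k}\|$ for $l\geq k$. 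For each $\varepsilon\in L^{0}_{++}(\mathcal{F})$ set $K_{\varepsilon}=\min\{k:\widehat{R}_{k}\leq\varepsilon\}$ (finite a.s.) and define $v_{\varepsilon}=\sum_{j}\tilde{I}_{[K_{\varepsilon}=j]}y_{j}\in E$, which exists by the countable concatenation property. Directing $L^{0}_{++}(\mathcal{F})$ downwards, a direct computation gives $\|v_{\varepsilon}-v_{\varepsilon'}\|\leq\varepsilon'$ whenever $\varepsilon\leq\varepsilon'$, and hence that $\{v_{\varepsilon}\}$ is $\mathcal{T}_{c}$-Cauchy; by $\mathcal{T}_{c}$-completeness it $\mathcal{T}_{c}$-converges to some $v\in E$. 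Passing to the limit in $\|v_{\varepsilon}-v_{\varepsilon'}\|\leq\varepsilon'$ along the cofinal subnet $\{\varepsilon\leq\varepsilon'\}$ yields $\|v-v_{\varepsilon'}\|\leq\varepsilon'$ for \emph{every} $\varepsilon'$; combining this with $\|y_{l}-v_{\varepsilon'}\|\leq\varepsilon'$ on $[K_{\varepsilon'}\leq l]$ and taking $\varepsilon'$ to be a small constant $\delta$ gives $P\{\|y_{l}-v\|>2\delta\}\leq P\{K_{\delta}>l\}\to0$, i.e. $y_{l}\to v$ in probability. Since the original sequence is $\mathcal{T}_{\varepsilon,\lambda}$-Cauchy with a subsequence converging to $v$, it too converges to $v$, proving completeness.

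The main obstacle, and the step I would spend the most care on, is exactly this construction in the converse: recognizing that the correct object to feed into $\mathcal{T}_{c}$-completeness is the concatenated net $v_{\varepsilon}=\sum_{j}\tilde{I}_{[K_{\varepsilon}=j]}y_{j}$ indexed by the random accuracy $\varepsilon$, with the random stopping index $K_{\varepsilon}$ chosen so that the Cauchy estimate $\|v_{\varepsilon}-v_{\varepsilon'}\|\leq\varepsilon'$ becomes an \emph{order} inequality valid on all of $\Omega$ rather than merely a deterministic or almost-sure rate. A secondary subtlety is that the constants $\delta\downarrow0$ are not cofinal in $L^{0}_{++}(\mathcal{F})$, so convergence $v_{\varepsilon}\to v$ cannot be read off along them directly; this is circumvented by first extracting the clean inequality $\|v-v_{\varepsilon'}\|\leq\varepsilon'$ for all $\varepsilon'$ and only afterwards specializing $\varepsilon'$ to constants to recover convergence in probability.
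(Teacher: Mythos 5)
The paper states this proposition without proof, merely citing \cite{Guotx-relation}, so your argument has to stand on its own --- and it does. The forward direction is handled correctly: since $\{(p,q):\|p-q\|\leq\varepsilon'\}\subset U(\varepsilon'',\lambda)$ for constants $\varepsilon'<\varepsilon''$, a $\mathcal{T}_{c}$-Cauchy net is $d_{\varepsilon,\lambda}$-Cauchy and hence has a $\mathcal{T}_{\varepsilon,\lambda}$-limit, which you correctly upgrade to a $\mathcal{T}_{c}$-limit via the closedness (under convergence in probability) of the order interval $\{\xi:\xi\leq\varepsilon\}$; and your partial-sum argument for the countable concatenation property, using that $\|s_{N}-s_{M}\|$ is supported on $\bigcup_{n>M}A_{n}$, is the standard and correct one. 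The converse is indeed the substantive half, and your central device --- the concatenated net $v_{\varepsilon}=\sum_{j}\tilde{I}_{[K_{\varepsilon}=j]}y_{j}$ built from a rapidly Cauchy subsequence via the random stopping index $K_{\varepsilon}$, engineered precisely so that the Cauchy estimate $\|v_{\varepsilon}-v_{\varepsilon'}\|\leq\varepsilon'$ holds as an order inequality on all of $\Omega$ --- is exactly the stratification technique on which the cited source relies, so your route matches the known proof in spirit and in its key construction. Two small points deserve explicit wording. First, $K_{\varepsilon}$ must be defined pointwise through chosen representatives, $K_{\varepsilon}(\omega)=\min\{k:\widehat{R}_{k}^{0}(\omega)\leq\varepsilon^{0}(\omega)\}$, since for no single $k$ need $\widehat{R}_{k}\leq\varepsilon$ hold as an inequality in $L^{0}$; one should note that $[K_{\varepsilon}=j]=[\widehat{R}_{j}\leq\varepsilon]\setminus[\widehat{R}_{j-1}\leq\varepsilon]$ is measurable and that $K_{\varepsilon}<\infty$ a.s.\ because $\widehat{R}_{k}\downarrow 0$ a.s.\ (Borel--Cantelli applied to $P\{\|y_{k+1}-y_{k}\|>2^{-k}\}<2^{-k}$). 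Second, when you pass to the limit in $\|v_{\varepsilon}-v_{\varepsilon'}\|\leq\varepsilon'$ along the cofinal subnet, you are implicitly using that $\mathcal{T}_{c}$-convergence gives $\|v-v_{\varepsilon}\|\leq\delta$ eventually for every $\delta\in L^{0}_{++}$, whence $\|v-v_{\varepsilon'}\|\leq\varepsilon'+\delta$ for all constant $\delta>0$ and so $\|v-v_{\varepsilon'}\|\leq\varepsilon'$; this one-line justification should appear. With these glosses the proof is complete, including the final specialization of $\varepsilon'$ to constants $\delta$, which correctly circumvents the fact that constants are not cofinal in the downward-directed $L^{0}_{++}(\mathcal{F})$.
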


Obviously, Proposition 7.2.3 \cite{Guotx-recent} also holds for a subset with the countable concatenation property:

\begin{proposition}
Let $(E,\|\cdot\|)$ be an $RN$ module
 over $R$ with base $(\Omega,{\cal F},P)$ such that $E$ has
the countable concatenation property, $G\subset E$ a subset with the countable concatenation property and $f:E\rightarrow
\bar{L}^{0}(\mathcal {F})$ a function with the local property.
Then $f|_{G}$ is ${\cal T}_{\varepsilon,\lambda}-$lower semiconinuous  iff
$f|_{G}$ is ${\cal T}_{c}-$lower semicontinuous, in particular,  this is true
when $f$ is  $L^{0}(\mathcal {F})$-convex.

\end{proposition}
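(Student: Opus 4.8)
The plan is to establish the two implications of the equivalence separately; only one of them carries real content, and it is handled exactly as in Proposition 7.2.3 of \cite{Guotx-recent} once one checks that it is the countable concatenation property of $G$ (rather than that of the whole module) that the argument consumes. Recall from the introduction that the $\mathcal{T}_{c}$-topology is finer than the $(\varepsilon,\lambda)$-topology, hence so are the corresponding subspace topologies that $G\times L^{0}(\mathcal{F})$ inherits from $E\times L^{0}(\mathcal{F})$. Since a set closed in a coarser topology is closed in a finer one, closedness of $epi(f|_{G})$ in the $(\varepsilon,\lambda)$-topology immediately gives closedness in the $\mathcal{T}_{c}$-topology; thus $\mathcal{T}_{\varepsilon,\lambda}$-lower semicontinuity of $f|_{G}$ always implies $\mathcal{T}_{c}$-lower semicontinuity, with no appeal to the local property or to countable concatenation.

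For the converse I would first verify that $epi(f|_{G})$ has the countable concatenation property. Indeed, let $(x_{n},r_{n})\in epi(f|_{G})$ for $n\in N$ and let $\{A_{n}\mid n\in N\}$ be a countable $\mathcal{F}$-partition of $\Omega$; put $x=\sum_{n\in N}\tilde{I}_{A_{n}}x_{n}$ and $r=\sum_{n\in N}\tilde{I}_{A_{n}}r_{n}$. The countable concatenation property of $G$ gives $x\in G$ (and $r\in L^{0}(\mathcal{F})$ is automatic), while the local property of $f$ yields $\tilde{I}_{A_{n}}f(x)=\tilde{I}_{A_{n}}f(\tilde{I}_{A_{n}}x_{n})=\tilde{I}_{A_{n}}f(x_{n})\leq \tilde{I}_{A_{n}}r_{n}$ for every $n$; summing over $n$ (the $A_{n}$ partition $\Omega$) gives $f(x)\leq r$, so $(x,r)\in epi(f|_{G})$. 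This is precisely the step in which the hypothesis on $G$ replaces the countable concatenation property of $E$ used in Proposition 7.2.3 of \cite{Guotx-recent}.

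The crux is then to pass from $\mathcal{T}_{c}$-closedness to $\mathcal{T}_{\varepsilon,\lambda}$-closedness of the set $C:=epi(f|_{G})$, which is the relative form of the relation between the two topologies established in \cite{Guotx-relation}. Since the $(\varepsilon,\lambda)$-topology is metrizable it suffices to take a sequence $(x_{n},r_{n})\in C$ converging in it to some $(x,r)\in G\times L^{0}(\mathcal{F})$ and to show $(x,r)\in C$. One cannot feed this sequence directly into the $\mathcal{T}_{c}$-closedness hypothesis, because $\mathcal{T}_{c}$ is not metrizable and an $(\varepsilon,\lambda)$-convergent sequence need not $\mathcal{T}_{c}$-converge; instead I would build from it a $\mathcal{T}_{c}$-convergent \emph{net} inside $C$. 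Convergence in the $(\varepsilon,\lambda)$-topology is convergence in probability of $\|(x_{n},r_{n})-(x,r)\|$, so after passing to a subsequence (again denoted $(x_{n},r_{n})$) it converges almost surely; then for each $\varepsilon\in L^{0}_{++}(\mathcal{F})$ the sets $A^{\varepsilon}_{n}=\{\omega:\ n\text{ is the least index with }\|(x_{n},r_{n})-(x,r)\|(\omega)\leq \varepsilon(\omega)\}$ form a countable $\mathcal{F}$-partition of $\Omega$, and $z_{\varepsilon}:=\sum_{n\in N}\tilde{I}_{A^{\varepsilon}_{n}}(x_{n},r_{n})$ lies in $C$ by the countable concatenation property just established and satisfies $\|z_{\varepsilon}-(x,r)\|\leq \varepsilon$. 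The net $\{z_{\varepsilon}\mid \varepsilon\in L^{0}_{++}(\mathcal{F})\}$, directed downwards by $\varepsilon$, therefore $\mathcal{T}_{c}$-converges to $(x,r)$; since its members lie in $C$ and $(x,r)\in G\times L^{0}(\mathcal{F})$, the $\mathcal{T}_{c}$-closedness of $C$ forces $(x,r)\in C$, which is the desired $\mathcal{T}_{\varepsilon,\lambda}$-lower semicontinuity.

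I expect the main obstacle to be exactly this conversion of an $(\varepsilon,\lambda)$-convergent sequence into a $\mathcal{T}_{c}$-convergent net of concatenations: one must choose the partitions $\{A^{\varepsilon}_{n}\}$ so that the glued points beat every prescribed $\varepsilon\in L^{0}_{++}(\mathcal{F})$ while staying inside $C$, and it is precisely the countable concatenation property of $G$, through that of $epi(f|_{G})$, that keeps them in $C$. Finally, the last clause is immediate: as recalled after Definition 3.3, an $L^{0}(\mathcal{F})$-convex function has the local property, so the equivalence just proved applies to it.
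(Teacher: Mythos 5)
Your proof is correct and follows essentially the route the paper intends: the trivial direction is the comparison of the two topologies, and the substantive direction rests on showing that $epi(f|_{G})$ has the countable concatenation property (using the local property of $f$ and the countable concatenation property of $G$, with $E$'s countable concatenation property guaranteeing well-definedness of the glued points) and then identifying the $\mathcal{T}_{c}$- and $\mathcal{T}_{\varepsilon,\lambda}$-closures of such a set. The only difference is presentational: the paper disposes of this by citing Proposition 7.2.3 of \cite{Guotx-recent}, whereas your explicit least-index-partition net construction re-proves the closure identity $\bar{A}_{c}=\bar{A}_{\varepsilon,\lambda}$ for sets with the countable concatenation property, which is exactly Proposition 3.9 stated in the paper and could have been invoked directly.
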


\begin{proposition}[\cite{Guotx-recent}]

Let $(E,\|\cdot\|)$ be an RN module  over $K$ with base
$(\Omega,{\cal F},P)$   and $A$ a subset with the countable
concatenation property of $E$. Then
$\bar{A}_c=\bar{A}_{\varepsilon,\lambda}$, where $\bar{A}_c$ and
$\bar{A}_{\varepsilon,\lambda}$ stand for the ${\cal T}_c-$closure and
${\cal T}_{\varepsilon,\lambda}-$closure of $A$, respectively.

\end{proposition}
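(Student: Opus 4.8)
The plan is to prove the two inclusions $\bar{A}_c \subseteq \bar{A}_{\varepsilon,\lambda}$ and $\bar{A}_{\varepsilon,\lambda} \subseteq \bar{A}_c$ separately. The first is immediate and uses no hypothesis on $A$: since the $\mathcal{T}_c$-topology is stronger than the $(\varepsilon,\lambda)$-topology, every $\mathcal{T}_{\varepsilon,\lambda}$-closed set is also $\mathcal{T}_c$-closed, so the smallest $\mathcal{T}_c$-closed set containing $A$ is contained in the smallest $\mathcal{T}_{\varepsilon,\lambda}$-closed set containing $A$, i.e. $\bar{A}_c \subseteq \bar{A}_{\varepsilon,\lambda}$. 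Hence all the work goes into the reverse inclusion, which is where the countable concatenation property of $A$ is used.

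For the reverse inclusion I would fix $x \in \bar{A}_{\varepsilon,\lambda}$ and show that every basic $\mathcal{T}_c$-neighborhood $\{y : \|y-x\| \leq \varepsilon\}$ of $x$, with $\varepsilon \in L^{0}_{++}(\mathcal{F})$ arbitrary, meets $A$; this is exactly the assertion $x \in \bar{A}_c$. Since the $(\varepsilon,\lambda)$-topology is metrizable (equivalently, it is the topology of convergence in probability $P$), membership in $\bar{A}_{\varepsilon,\lambda}$ yields a sequence $\{a_n\}$ in $A$ with $\|a_n - x\| \to 0$ in probability, and passing to a subsequence $\{a_{n_k}\}$ I may assume $\|a_{n_k} - x\| \to 0$ almost surely. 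Fixing a representative of $\varepsilon$ that is strictly positive a.s., I would then build a countable partition of $\Omega$ by a first-hitting rule, setting $B_k := \{\omega : \|a_{n_k}-x\|(\omega) \leq \varepsilon(\omega)\} \setminus \bigcup_{j<k} B_j$. Because $\|a_{n_k}-x\| \to 0$ a.s. while $\varepsilon > 0$ a.s., for almost every $\omega$ some index $k$ satisfies the defining inequality, so the disjoint sets $\{B_k : k \in N\}$ cover $\Omega$ up to a $P$-null set.

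The crux is the concatenation step. Setting $a := \sum_{k\in N} \tilde{I}_{B_k}\cdot a_{n_k}$, the countable concatenation property of $A$ guarantees $a \in A$, and the defining relation $\tilde{I}_{B_k}\cdot a = \tilde{I}_{B_k}\cdot a_{n_k}$ together with the $L^{0}$-norm identity $\tilde{I}_{B_k}\|v\| = \|\tilde{I}_{B_k}\cdot v\|$ (axiom (4) of an $RN$ module) gives $\tilde{I}_{B_k}\|a-x\| = \tilde{I}_{B_k}\|a_{n_k}-x\| \leq \tilde{I}_{B_k}\cdot \varepsilon$ for every $k$; since $\{B_k\}$ partitions $\Omega$, this forces $\|a - x\| \leq \varepsilon$. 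Thus $a$ lies both in $A$ and in the prescribed $\mathcal{T}_c$-neighborhood of $x$, proving $x \in \bar{A}_c$ and completing the reverse inclusion. I expect the main obstacle to be precisely this construction: converting the almost-sure (pointwise) smallness of the subsequence into a \emph{single} element of $A$ that is uniformly within $\varepsilon$ of $x$ in the $L^{0}$-norm. This is exactly what the countable concatenation property is designed to supply; without it one obtains only approximation in probability, and the reverse inclusion genuinely fails.
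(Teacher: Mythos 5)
Your proposal is correct, and since this paper states Proposition 3.9 without proof (citing \cite{Guotx-recent}), the relevant comparison is with the cited source, whose argument is essentially the one you give: the trivial inclusion $\bar{A}_c\subseteq\bar{A}_{\varepsilon,\lambda}$ from the fact that ${\cal T}_c$ is finer, then for $x\in\bar{A}_{\varepsilon,\lambda}$ an a.s.-convergent subsequence, the first-hitting partition $\{B_k\}$ on which $\|a_{n_k}-x\|\leq\varepsilon$, and the countable concatenation $a=\sum_k\tilde{I}_{B_k}a_{n_k}\in A$ with $\|a-x\|\leq\varepsilon$ by axiom (4) of the $L^0$-norm. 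All the delicate points (metrizability of ${\cal T}_{\varepsilon,\lambda}$, closed balls forming a ${\cal T}_c$-neighborhood base, absorbing the null set into the partition) are handled correctly, so there is nothing to add.
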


We can now give the precise form of Ekeland's variational principle under $\mathcal{T}_{c}$, namely Theorem 3.10 below, the difference between Theorem 3.6 and Theorem 3.10 lies in that the local property of $\varphi$ in Theorem 3.10 must be assumed to apply Proposition 3.8.

\begin{theorem}
 Let $(E,\|\cdot\|)$ be a $\mathcal
{T}_{c}-$complete RN module over $R$ with base $(\Omega,{\cal F},P)$
such that $E$ has the countable concatenation property, $ \varepsilon \in L^{0}_{++}(\mathcal {F})$ and  $\varphi: E \rightarrow \bar{L}^{0}(\mathcal{F})$ have the local property. If $G \subset E$ is a $\mathcal {T}_{c}-$closed subset with  the countable concatenation property and  $\varphi|_{G}$ is a proper, $\mathcal {T}_{c}-$lower semicontinuous and bounded from below on $G$, then for each point $x_{0}\in G$ satisfying $\varphi(x_{0}) \leq
\bigwedge \varphi(G)+\varepsilon$ and each $ \alpha \in
L^{0}_{++}(\mathcal {F})$, there exists $z\in G$
such that the following are satisfied:\\

\noindent $(1)$ $\varphi(z)  \leq \varphi(x_{0})-\alpha \|z-x_{0}\| $;\\
\noindent $(2)$ $\|z-x_{0}\| \leq \alpha^{-1}\cdot \varepsilon$;\\
\noindent $(3)$ for each $x \in G$ such that $x\neq z$,  $\varphi(x)\nleq \varphi(z)-\alpha\|x-z\|$.

\end{theorem}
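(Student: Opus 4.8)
The plan is to\textbf{ reduce Theorem 3.10 to the already-proven Theorem 3.6} by transferring the $\mathcal{T}_c$-hypotheses to the corresponding $\mathcal{T}_{\varepsilon,\lambda}$-hypotheses, applying Theorem 3.6, and then checking that the conclusions of Theorem 3.6 already give exactly what Theorem 3.10 asserts. The key observation is that the conclusions (1)--(3) are purely order-theoretic and metric inequalities among elements and their $\bar{L}^0(\mathcal{F})$-values; they do not refer to any topology. Hence once I locate the point $z\in G$ via Theorem 3.6, the same $z$ will automatically satisfy the three assertions of Theorem 3.10. So the entire content of the proof is showing that the two theorems operate on the same underlying data.

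\medskip

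First I would upgrade completeness: since $E$ is assumed $\mathcal{T}_c$-complete and has the countable concatenation property, Proposition 3.7 immediately gives that $E$ is $\mathcal{T}_{\varepsilon,\lambda}$-complete, so $(E,\|\cdot\|)$ is a $\mathcal{T}_{\varepsilon,\lambda}$-complete $RN$ module as Theorem 3.6 requires. Second I would upgrade closedness of $G$: $G$ is $\mathcal{T}_c$-closed and has the countable concatenation property, so by Proposition 3.9, $\bar{G}_c=\bar{G}_{\varepsilon,\lambda}$; since $G=\bar{G}_c$ we get $G=\bar{G}_{\varepsilon,\lambda}$, i.e.\ $G$ is $\mathcal{T}_{\varepsilon,\lambda}$-closed. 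Third, and this is where the local-property hypothesis on $\varphi$ is genuinely used, I would upgrade lower semicontinuity: since $E$ has the countable concatenation property, $G\subset E$ has the countable concatenation property, and $\varphi$ has the local property, Proposition 3.8 tells us that $\varphi|_G$ is $\mathcal{T}_{\varepsilon,\lambda}$-lower semicontinuous if and only if it is $\mathcal{T}_c$-lower semicontinuous; our hypothesis is the $\mathcal{T}_c$-version, so we obtain the $\mathcal{T}_{\varepsilon,\lambda}$-version. The remaining hypotheses---$\varphi|_G$ proper and bounded from below on $G$, $\varepsilon,\alpha\in L^0_{++}(\mathcal{F})$, and the existence of $x_0\in G$ with $\varphi(x_0)\leq \bigwedge\varphi(G)+\varepsilon$---are stated identically in both theorems and carry over unchanged.

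\medskip

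With all hypotheses of Theorem 3.6 verified for the data $(E,\|\cdot\|)$, $G$, $\varepsilon$, $\varphi|_G$, $x_0$, $\alpha$, I would apply Theorem 3.6 to produce $z\in G$ satisfying (1) $\varphi(z)\leq\varphi(x_0)-\alpha\|z-x_0\|$, (2) $\|z-x_0\|\leq\alpha^{-1}\cdot\varepsilon$, and (3) $\varphi(x)\nleq\varphi(z)-\alpha\|x-z\|$ for each $x\in G$ with $x\neq z$. Since these three statements are precisely the conclusions of Theorem 3.10 and involve no topology, the proof is complete by taking this same $z$.

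\medskip

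I do not expect a serious obstacle here; the work is entirely in recognizing that Propositions 3.7, 3.8, and 3.9 are exactly the bridges needed to convert each $\mathcal{T}_c$-hypothesis into its $\mathcal{T}_{\varepsilon,\lambda}$-counterpart. If anything requires care, it is making sure the countable concatenation property is available for \emph{both} $E$ and the subset $G$ wherever each proposition demands it---Proposition 3.7 needs it for $E$, Proposition 3.8 needs it for $E$ and $G$, and Proposition 3.9 needs it for $G$---and that is why both the ambient-module and the subset versions of the hypothesis appear in the statement of Theorem 3.10. The role of assuming the local property of $\varphi$ (rather than just on $G$) is solely to invoke Proposition 3.8, exactly as the remark preceding the theorem indicates.
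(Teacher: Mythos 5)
Your proposal is correct and follows exactly the paper's own proof: the authors likewise invoke Proposition 3.7 for $\mathcal{T}_{\varepsilon,\lambda}$-completeness of $E$, Proposition 3.8 to transfer lower semicontinuity of $\varphi|_{G}$, and Proposition 3.9 for $\mathcal{T}_{\varepsilon,\lambda}$-closedness of $G$, then conclude directly from Theorem 3.6. Your added remarks on where each countable concatenation hypothesis is consumed, and that the conclusions are topology-free, are accurate elaborations of the same one-line argument.
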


\begin{proof}

Since $E$ is  ${\cal T}_{\varepsilon,\lambda}-$complete by Proposition 3.7, $\varphi$ is also  ${\cal T}_{\varepsilon,\lambda}-$lower semicontinuous on $G$ by Proposition 3.8 and $G$ is  $\mathcal {T}_{\varepsilon,\lambda}-$closed by Proposition 3.9, then our desired conclusion follows immediately from Theorem 3.6.        $\square$

\end{proof}

Similarly, we can obtain the following Caristi's fixed point theorem under $\mathcal
{T}_{c}$:

\begin{theorem}
 Let $(E,\|\cdot\|)$ be a $\mathcal
{T}_{c}-$complete RN module over $R$ with base $(\Omega,{\cal F},P)$
such that $E$ has  the countable concatenation property and  $\varphi: E \rightarrow \bar{L}^{0}(\mathcal {F})$  a proper
function  such that $\varphi$ is $\mathcal {T}_{c}-$lower semicontinuous and
bounded from below and has the local property.  If $T:E \rightarrow E$ is a mapping  such that
$\varphi(Tu) +\|Tu-u\| \leq\varphi(u),\forall u\in E$, then $T$ has
a fixed point.

\end{theorem}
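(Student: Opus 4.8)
The plan is to reduce the whole statement to the $(\varepsilon,\lambda)$-setting, where the corresponding Caristi-type result has already been established as Theorem 2.12, and then to read the conclusion straight off. The point is that the $RN$ module $E$ is automatically an $RM$ space under the induced random metric $d(x,y)=\|x-y\|$, and under this identification the contractive hypothesis $\varphi(Tu)+\|Tu-u\|\leq\varphi(u)$ is literally the Caristi condition $\varphi(Tu)+d(Tu,u)\leq\varphi(u)$ of Theorem 2.12. So it suffices to check that, when $E$ carries its $(\varepsilon,\lambda)$-topology, the pair $(E,\varphi)$ satisfies every hypothesis of Theorem 2.12; this mirrors the proof of Theorem 3.10, the only change being that the target is now the Caristi theorem rather than the Ekeland principle and that no proper subset $G$ intervenes (equivalently, one takes $G=E$ throughout).

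First I would upgrade the completeness. Since $E$ is $\mathcal{T}_c$-complete and has the countable concatenation property, Proposition 3.7 gives that $E$ is $\mathcal{T}_{\varepsilon,\lambda}$-complete, i.e.\ $(E,d)$ is a $d_{\varepsilon,\lambda}$-complete $RM$ space. Next I would upgrade the lower semicontinuity. Applying Proposition 3.8 with $G=E$ (which is legitimate because $E$ itself has the countable concatenation property and $\varphi$ has the local property), the $\mathcal{T}_c$-lower semicontinuity of $\varphi$ is equivalent to its $\mathcal{T}_{\varepsilon,\lambda}$-lower semicontinuity; hence $\varphi$ is $\mathcal{T}_{\varepsilon,\lambda}$-lower semicontinuous. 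The remaining hypotheses of Theorem 2.12 — that $\varphi$ is proper and bounded from below — are assumed outright and do not depend on which of the two topologies is used.

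With these facts in hand, $(E,d)$ is a $d_{\varepsilon,\lambda}$-complete $RM$ space, $\varphi$ is proper, $\mathcal{T}_{\varepsilon,\lambda}$-lower semicontinuous and bounded from below, and $T$ satisfies $\varphi(Tu)+d(Tu,u)\leq\varphi(u)$ for every $u\in E$. Theorem 2.12 then applies verbatim and produces a fixed point of $T$, completing the proof. In effect the argument is a bookkeeping exercise that channels Theorem 2.12 through the bridge Propositions 3.7 and 3.8.

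The step that carries the real weight is the appeal to Proposition 3.8. The only nontrivial inclusion is that $\mathcal{T}_c$-lower semicontinuity implies $\mathcal{T}_{\varepsilon,\lambda}$-lower semicontinuity, which in general fails because $\mathcal{T}_c$ is strictly finer than $\mathcal{T}_{\varepsilon,\lambda}$; it is precisely here that the local property of $\varphi$ and the countable concatenation property of $E$ become indispensable, exactly as flagged in the discussion opening Section 3. Everything else — the metric identification and the matching of the Caristi inequality with $d(Tu,u)=\|Tu-u\|$ — is routine and requires no further estimate.
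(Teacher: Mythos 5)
Your proposal is correct and is exactly the argument the paper intends: Theorem 3.11 is stated with only the word ``Similarly,'' pointing back to the proof of Theorem 3.10, which transfers completeness via Proposition 3.7 and lower semicontinuity via Proposition 3.8 and then invokes the $(\varepsilon,\lambda)$-version of the result --- here Theorem 2.12 in place of Theorem 3.6, with $G=E$ so that Proposition 3.9 is not needed, precisely as you observe. Your identification of the $\mathcal{T}_c$-to-$\mathcal{T}_{\varepsilon,\lambda}$ direction of Proposition 3.8 as the step where the local property and the countable concatenation property do the real work is also accurate.
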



\section{ The Bishop-Phelps theorem in complete  RN modules}
\label{}

 In this section, applying the results in Section 3  we establish the Bishop-Phelps theorems in complete  $RN$ modules under the framework of random conjugate spaces and proceed under the two kinds of topologies, respectively. The main results in this section are Theorems 4.2 and  4.3 below. To introduce them, we first give some necessary notation and terminology.

Let us first recall the notion of a random conjugate space, though it can be introduced for any $RN$ space \cite{Guotx-basic}, to save space we only need the following:

\begin{definition}[\cite{Guotx-relation}]
 Let $(E,\|\cdot\|)$ be an $RN$ module  over
$K$ with base ($\Omega,{\cal F},P$). Then
$E^{\ast}_{\varepsilon,\lambda}=\{f:E\rightarrow L^{0}({\cal
F},K)| f$ is a continuous module homomorphism from $(E,{\cal
T}_{\varepsilon,\lambda})$ to $(L^{0}({\cal F},K),{\cal
T}_{\varepsilon,\lambda})\}$ and $E^{\ast}_{c}=\{f:E\rightarrow
L^{0}({\cal F},K)| f$ is a continuous module homomorphism from
$(E,{\cal T}_{c})$ to $(L^{0}({\cal F},K),{\cal T}_{c})\}$, are
called the random conjugate spaces of $(E,\|\cdot\|)$ under ${\cal
T}_{\varepsilon,\lambda}$ and ${\cal T}_{c}$, respectively.

\end{definition}

 It is well known from \cite{Guotx-relation} that an $RN$ module $(E,\|\cdot\|)$ over
$K$ with base $(\Omega,{\cal F},P)$ has the same random conjugate space under ${\cal
T}_{\varepsilon,\lambda}$ and ${\cal T}_{c}$, namely $E^{\ast}_{\varepsilon,\lambda}=E^{\ast}_{c}$, and thus they can be denoted by the same notation  $E^{\ast}$. It is well known that a function $f$ from $E$ to $L^{0}({\cal F},K)$ belongs to $E^{\ast}$ if and only if $f$ is a linear operator and there is $\xi\in L^{0}_+(\mathcal {F})$ such that $|f(x)|\leqslant\xi\cdot\|x\|,\forall x\in E$, so an element of  $E^{\ast}$ is also called an a.s. bounded random linear functional on $E$. Further, define $\|\cdot\|^{\ast}:E^{\ast}\rightarrow L^{0}_+(\mathcal {F})$ by
$\|f\|^{\ast}=\wedge\{\xi\in
L^{0}_+(\mathcal {F}):|f(x)|\leqslant\xi\cdot\|x\|,\forall x\in E\}$, then
$(E^{\ast},\|\cdot\|^{\ast})$ is also an $RN$ module over $K$ with base
$(\Omega,{\cal F},P)$ and $\|f\|^{\ast}=\vee\{|f(x)|: x\in
E$ and $\|x\|\leqslant 1\}$ for any $f \in E^{\ast}$. Besides, it is known from \cite{Guotx-recent} that $E^{\ast}$ is ${\cal T}_{\varepsilon,\lambda}-$complete, so $E^{\ast}$ must have the countable concatenation property \cite{Guotx-relation}.

 Let $E$ be a left module over the
algebra $L^{0}(\mathcal{F},K)$, a nonempty subset $M$ of $E$ is
called $L^{0}(\mathcal{F})$-convex  if $\xi x+\eta y \in M$ for any $x$ and $y \in M$ and $\xi $ and $\eta\in L^{0}_{+}(\mathcal {F})$ such that $\xi+\eta=1$. In addition, it is
called an $L^{0}(\mathcal{F})$-convex cone if  $\xi x+\eta y \in M$ for any $x$ and $y \in M$ and $\xi $ and $\eta\in L^{0}_{+}(\mathcal {F})$, further $M$ is called pointed if $M\bigcap (-M)=\theta$.

Let $E$ be an $RN$ module over $R$ with base $(\Omega,{\cal F},P)$, $G\subset E$ a subset and $f\in E^{\ast}\setminus \{0\}$ such that $f$ is bounded from above on $G$. If $x \in G$ is such that $f(x)=\bigvee f(G)$, then $x$ is called a support point of $f$ and $f$ is called an a.s. bounded random linear functional supporting $G$ at $x$.

We can now state the main results in this section.

\begin{theorem}
 Let $(E,\|\cdot\|)$ be a $\mathcal
{T}_{c}-$complete RN module over $R$ with base $(\Omega,{\cal F},P)$
such that $E$ has the countable concatenation
property and $G$  a  $\mathcal {T}_{c}-$closed $L^{0}(\mathcal
{F})-$convex subset of $E$ such that $G$ has the countable
concatenation property. Then the set of  support points of $G$ is
$\mathcal {T}_{c}-$dense in the $\mathcal {T}_{c}-$boundary of $G$ $($briefly, $\partial_{c}G$$)$.

\end{theorem}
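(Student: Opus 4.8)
The plan is to reduce the density statement to a single local task: given $p\in\partial_{c}G$ and an arbitrary radius $\varepsilon_{0}\in L^{0}_{++}(\mathcal{F})$, produce a support point $z$ of $G$ with $\|z-p\|\leq\varepsilon_{0}$. The engine will be the precise Ekeland principle, Theorem 3.10, applied on $G$ to the function $\varphi=-f|_{G}$ for a suitably chosen separating functional $f\in E^{\ast}$; the point $z$ delivered by Theorem 3.10 will turn out to be a conical support point, from which the desired supporting functional is recovered by one further separation.

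First I would manufacture $f$. Since $G$ is $\mathcal{T}_{c}$-closed and $p$ lies in $\partial_{c}G$, every basic $\mathcal{T}_{c}$-neighborhood of $p$ meets $E\setminus G$, so I can pick $w\notin G$ with $\|w-p\|$ as small as desired (in particular $\leq\varepsilon_{0}/2$, after a harmless perturbation keeping the relevant radius in $L^{0}_{++}(\mathcal{F})$). Applying the random Hahn-Banach separation theorem for the locally $L^{0}$-convex topology \cite{Filipovic-Kupper} to the $L^{0}(\mathcal{F})$-convex $\mathcal{T}_{c}$-closed set $G$ and the point $w$ yields $f\in E^{\ast}$ with $\|f\|^{\ast}=1$ and some $\gamma\in L^{0}(\mathcal{F})$ such that $f(x)\leq\gamma<f(w)$ for all $x\in G$; in particular $f$ is bounded above on $G$. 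Because $\|f\|^{\ast}=1$ forces $|f(p)-f(w)|\leq\|p-w\|$, one gets $\bigvee f(G)-f(p)\leq\|p-w\|$, equivalently $\varphi(p)=-f(p)\leq\bigwedge\varphi(G)+\varepsilon$ as soon as $\varepsilon\geq\|p-w\|$. Moreover $\varphi=-f|_{G}$ is proper and bounded from below on $G$, has the local property (being the negative of a module homomorphism), and is $\mathcal{T}_{c}$-continuous, hence $\mathcal{T}_{c}$-lower semicontinuous; thus all hypotheses of Theorem 3.10 hold with starting point $p$.

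Next I would invoke Theorem 3.10 with this $\varphi$, the point $p$, a radius $\varepsilon$ chosen so that $\|p-w\|\leq\varepsilon\leq\alpha\varepsilon_{0}$, and a fixed $\alpha\in L^{0}_{++}(\mathcal{F})$ with $0<\alpha<1$. It produces $z\in G$ with $\|z-p\|\leq\alpha^{-1}\varepsilon\leq\varepsilon_{0}$ and, crucially, conclusion $(3)$: for every $x\neq z$ in $G$, $\varphi(x)\nleq\varphi(z)-\alpha\|x-z\|$. Unravelling the definitions, $\varphi(x)\leq\varphi(z)-\alpha\|x-z\|$ holding a.s. is exactly $f(x)-f(z)\geq\alpha\|x-z\|$ a.s.; hence conclusion $(3)$ says precisely that no $x\neq z$ in $G$ lies in $z+C$, where $C=\{v\in E:f(v)\geq\alpha\|v\|\}$ is the Bishop-Phelps $L^{0}$-cone. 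In other words $(z+C)\cap G=\{z\}$, so $z$ is a conical support point. It is exactly the form ``$\nleq$'' (failure on a set of positive measure) that fits here: membership in the cone demands the inequality a.s. on all of $\Omega$, which conclusion $(3)$ forbids for every $x\neq z$.

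Finally, since $\alpha<1=\|f\|^{\ast}$ the cone $C$ is $L^{0}(\mathcal{F})$-convex with nonempty $\mathcal{T}_{c}$-interior, so a second separation separates the $L^{0}(\mathcal{F})$-convex sets $G$ and $z+C$, which touch only at $z$: this gives $g\in E^{\ast}\setminus\{0\}$ with $g(x)\leq g(z)$ for all $x\in G$, i.e. $g(z)=\bigvee g(G)$, so that $z$ is a support point of $G$ within $\varepsilon_{0}$ of $p$. As $p\in\partial_{c}G$ and $\varepsilon_{0}$ were arbitrary, the support points are $\mathcal{T}_{c}$-dense in $\partial_{c}G$. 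The main obstacle I anticipate is the two separation steps: one must have at hand a random Hahn-Banach separation theorem valid for the locally $L^{0}$-convex topology on RN modules enjoying the countable concatenation property, verify that the cone $C$ is $L^{0}(\mathcal{F})$-convex with nonempty $\mathcal{T}_{c}$-interior, and confirm that the separating functionals genuinely lie in $E^{\ast}$ and have the local property. A secondary technical point is coordinating $\varepsilon$, $\alpha$ and $\|w-p\|$ so that the approximation radius $\alpha^{-1}\varepsilon$ stays below $\varepsilon_{0}$ while keeping $\alpha$ bounded away from both $0$ and $1$.
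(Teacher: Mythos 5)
Your overall architecture is the same as the paper's (separate a nearby exterior point from $G$ via the random separation theorem to get $f$, run Theorem 3.10 with $\varphi=-f$ to get a point $z$ with $G\cap\bigl(z+K(f,\alpha)\bigr)=\{z\}$ — this is exactly the paper's Lemma 4.9 — and then recover a supporting functional by a second separation), and your translation of conclusion $(3)$ of Theorem 3.10 into the conical condition is correct. But the final step contains a genuine gap: the claim that the Bishop--Phelps cone $C=\{v\in E: f(v)\geq\alpha\|v\|\}$ has nonempty $\mathcal{T}_{c}$-interior is false in general in this setting. The separation theorem (Proposition 4.6) only yields strict separation on the hereditarily disjoint stratification $H(\{w\},G)$, so $\|f\|^{\ast}$ may vanish on a set $A\in\mathcal{F}$ with $P(A)>0$; your normalization $\|f\|^{\ast}=1$ a.s.\ is not available (the paper can only normalize to $\|f\|^{\ast}=\tilde{I}_{[\|f\|^{\ast}\neq 0]}$, and forcing full support by adding a functional living on $A^{c}$ would in general destroy boundedness of $f$ from above on $G$, since $G$ need not be a.s.\ bounded in Theorem 4.2). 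On such a set $A$, any $v\in C$ satisfies $0\leq\alpha\tilde{I}_{A}\|v\|\leq\tilde{I}_{A}f(v)=f(\tilde{I}_{A}v)\leq\|f\|^{\ast}\tilde{I}_{A}\|v\|=0$, so $\tilde{I}_{A}v=0$; hence $C$ lies in a proper closed submodule and has empty $\mathcal{T}_{c}$-interior whenever $\tilde{I}_{A}E\neq\{0\}$. With empty interior, the two-convex-set separation theorem (Proposition 4.7) is inapplicable to $G$ and $z+C$, so your second separation step, and with it the existence of a nonzero $g$, breaks down.

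The paper's fix is precisely what its Lemma 4.12 does: instead of separating $G$ from $z+C$ inside $E$, lift to the product module $E\times L^{0}(\mathcal{F})$ and separate $C_{1}=epi(\phi)$, where $\phi(x)=\alpha\|x\|-f(x)$, from $C_{2}=(G-z)\times\{0\}$. The epigraph $C_{1}$ \emph{always} has nonempty $\mathcal{T}_{c}$-interior (it contains $(0,1)$, since $\phi(0)=0<1$ on $\Omega$), regardless of where $\|f\|^{\ast}$ vanishes; the conical condition $G\cap(z+C)=\{z\}$ translates into the stratified disjointness hypotheses of Proposition 4.7, and the separating functional $F(x,r)=g(x)+r^{\ast}r$ with $r^{\ast}>0$ (normalized to $r^{\ast}=1$) yields $g$ with $g(z)=\bigvee g(G)$ and, importantly, the quantitative bound $\|f-g\|^{\ast}\leq\alpha$. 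That bound is also how the paper certifies $g\neq 0$: with $\|f\|^{\ast}=\tilde{I}_{[\|f\|^{\ast}\neq 0]}$ and $\alpha=\tfrac{1}{2}$, one gets $\|f-g\|^{\ast}\leq\tfrac{1}{2}<\|f\|^{\ast}$ on $[\|f\|^{\ast}\neq 0]$, forcing $g\neq 0$ — a point your sketch leaves unsecured once the interior claim fails. If you replace your last step by this epigraph separation (i.e., by Lemma 4.12 and Corollary 4.13), your argument becomes the paper's proof.
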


A ${\cal T}_{\varepsilon,\lambda}$-complete $L^{0}(\mathcal{F})$-convex subset $G$ must have the countable concatenation property, but we wonder whether Theorem 4.2 is true or not under the $(\varepsilon,\lambda)$-topology, namely, let $(E,\|\cdot\|)$ be a $\mathcal{T}_{\varepsilon,\lambda}-$complete RN module over $R$ with base $(\Omega,{\cal F},P)$
 and $G$  a  $\mathcal {T}_{\varepsilon,\lambda}-$closed $L^{0}(\mathcal
{F})-$convex subset of $E$, then is the set of  support points of $G$
$\mathcal {T}_{\varepsilon,\lambda}-$dense in the $\mathcal {T}_{\varepsilon,\lambda}-$boundary of $G$ $($briefly, $\partial_{\varepsilon,\lambda}G$$)$?

\begin{theorem}
 Let $(E,\|\cdot\|)$ be a $\mathcal
{T}_{c}-$complete RN module over $R$ with base $(\Omega,{\cal F},P)$
such that $E$ has the countable concatenation property, and $G$ an
a.s. bounded $($namely, $\bigvee\{\|p\|:p\in G\} \in
L^{0}_{+}(\mathcal{F})$$)$,  $\mathcal {T}_{c}$-closed and
$L^{0}(\mathcal {F})$-convex subset of $E$ such that $G$ has the
countable concatenation property.  Then the set of  a.s. bounded
random linear functionals supporting $G$ is $\mathcal {T}_{c}-$dense
in $E^{\ast}$.

\end{theorem}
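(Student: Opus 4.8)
The plan is to follow the cone-and-separation strategy of the classical Bishop--Phelps argument, with the precise $\mathcal{T}_{c}$-Ekeland principle (Theorem 3.10) as the engine. Since the $\mathcal{T}_{c}$-topology on the random conjugate space $(E^{\ast},\|\cdot\|^{\ast})$ is the locally $L^{0}$-convex topology generated by $\|\cdot\|^{\ast}$, $\mathcal{T}_{c}$-density amounts to showing: for every $f_{0}\in E^{\ast}$ and every $\varepsilon\in L^{0}_{++}(\mathcal{F})$ there is a support functional $g$ of $G$ with $\|g-f_{0}\|^{\ast}\leq\varepsilon$. First I would record the inputs needed to run Theorem 3.10 on $\varphi:=-f_{0}|_{G}$. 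The functional $f_{0}$ is $\mathcal{T}_{c}$-continuous, so $\varphi$ is $\mathcal{T}_{c}$-lower semicontinuous; $f_{0}$ is a module homomorphism, so $\varphi$ has the local property, since $\tilde{I}_{A}\varphi(\tilde{I}_{A}x)=-\tilde{I}_{A}f_{0}(x)=\tilde{I}_{A}\varphi(x)$; and $\varphi$ is bounded from below on $G$ because $G$ is a.s. bounded, giving $f_{0}(x)\leq\|f_{0}\|^{\ast}\cdot\bigvee\{\|p\|:p\in G\}\in L^{0}_{+}(\mathcal{F})$ for all $x\in G$. Using Theorem 3.5 I would first produce $x_{0}\in G$ with $\varphi(x_{0})\leq\bigwedge\varphi(G)+\varepsilon^{2}$, and then, for a slope parameter $\alpha\in L^{0}_{++}(\mathcal{F})$ to be tuned, apply Theorem 3.10 to obtain $z\in G$ satisfying its conclusions $(1)$--$(3)$.

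The geometric heart is to convert conclusion $(3)$ into a clean disjointness between $G-z$ and the $L^{0}$-convex cone
$$K:=\{y\in E:\alpha\|y\|\leq f_{0}(y)\}.$$
Indeed, if some $x\in G$ with $x\neq z$ had $x-z\in K$, then $\alpha\|x-z\|\leq f_{0}(x-z)=f_{0}(x)-f_{0}(z)$ on all of $\Omega$, i.e. $\varphi(x)\leq\varphi(z)-\alpha\|x-z\|$, contradicting the negation in $(3)$. Hence $(G-z)\cap K=\{\theta\}$, and since $K\neq E$ the apex $\theta$ is not $\mathcal{T}_{c}$-interior, so $(G-z)\cap\mathrm{int}_{c}(K)=\emptyset$, where $\mathrm{int}_{c}$ is the $\mathcal{T}_{c}$-interior. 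I would stress that the awkward lattice-negation in $(3)$ is used here only in this one direction, where membership in $K$ forces an everywhere-inequality, so no patching of partial-$\Omega$ inequalities is needed at this stage.

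Next I would invoke the random Hahn--Banach separation theorem for $L^{0}$-convex sets in an $RN$ module with the countable concatenation property (available in the theory of random conjugate spaces) to separate $G-z$ from $K$, producing $g\in E^{\ast}\setminus\{\theta\}$ with $g(y)\leq 0$ for all $y\in G-z$ and $g\geq 0$ on $K$. The first inequality gives $g(x)\leq g(z)$ for all $x\in G$, i.e. $g(z)=\bigvee g(G)$, so $g$ supports $G$ at $z$. The second, by the standard dual-cone slope estimate carried out with $L^{0}$-coefficients and justified stratification-wise via the local property, forces $g$ to lie near $f_{0}$: after normalizing $g$ appropriately one obtains $\|g-f_{0}\|^{\ast}\leq\alpha$, and choosing $\alpha$ suitably small relative to $\|f_{0}\|^{\ast}$ yields $\|g-f_{0}\|^{\ast}\leq\varepsilon$.

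The main obstacle I anticipate is the normalization intertwined with the separation step. For $K$ to have nonempty $\mathcal{T}_{c}$-interior one needs a vector $u$ with $\alpha\|u\|<f_{0}(u)$ on all of $\Omega$, which forces $\alpha<\|f_{0}\|^{\ast}$ a.s. and therefore breaks down on $[\|f_{0}\|^{\ast}=0]$, where $f_{0}$ vanishes yet the support functional must remain nonzero. I expect this to be handled by stratifying $\Omega$ into $[\|f_{0}\|^{\ast}>0]$ and $[\|f_{0}\|^{\ast}=0]$, running the cone argument on the former (choosing $\alpha$ below $\|f_{0}\|^{\ast}$ via the local property and the formula $\|f_{0}\|^{\ast}=\bigvee\{|f_{0}(x)|:\|x\|\leq 1\}$), constructing a small nonzero support functional on the latter, and gluing the two pieces through the countable concatenation property of $E^{\ast}$. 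Keeping the $L^{0}$-valued slope estimate uniform across this partition, and verifying the hypotheses of the random separation theorem on each stratum, is the delicate part.
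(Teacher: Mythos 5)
Your plan is essentially correct and matches the paper's argument through the Ekeland stage: the paper's Lemma 4.9 is exactly your application of Theorems 3.5 and 3.10 to $\varphi=-f_{0}$, producing a point $x_{0}\in G$ with $G\cap\bigl(K(f_{0},\alpha)+x_{0}\bigr)=\{x_{0}\}$, which is your disjointness $(G-z)\cap K=\{\theta\}$; the reduction of $\mathcal{T}_{c}$-density to $\|g-f_{0}\|^{\ast}\leq\varepsilon$ and the boundedness of $-f_{0}$ on the a.s. bounded $G$ are likewise as in the paper. Where you genuinely diverge is the separation step. You separate $G-z$ from the cone $K$ directly in $E$, which needs $K$ to have nonempty $\mathcal{T}_{c}$-interior, hence $\alpha<\|f_{0}\|^{\ast}$ on $\Omega$; as you yourself flag, this breaks down on $[\|f_{0}\|^{\ast}=0]$ and forces a stratification of $\Omega$, a normalization of $g$, an $L^{0}$-valued dual-cone slope estimate (classically this gives $2\alpha$ rather than $\alpha$, so halve your parameter), and a gluing via countable concatenation. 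The paper's Lemma 4.12 sidesteps all of this with an epigraph trick: it separates, in $E\times L^{0}(\mathcal{F})$, the set $C_{1}=epi(\phi)$ with $\phi(x)=k\|x\|-f(x)$ from the flat set $C_{2}=(G-x_{0})\times\{0\}$ via Proposition 4.7; the interior point $(0,1)\in C_{1}^{o}$ exists unconditionally, so the lemma holds for \emph{every} $k\in L^{0}_{++}(\mathcal{F})$ with no constraint relating $k$ to $\|f\|^{\ast}$, the stratum $[\|f\|^{\ast}=0]$ needs no special treatment, and $\|f-g\|^{\ast}\leq k$ falls out linearly from $0\leq g(x)+k\|x\|-f(x)$ applied to $\pm x$, with no normalization of $g$ (nonvanishing of $g$ is extracted afterwards, in Corollary 4.14, from $\|f-g\|^{\ast}\leq\delta<\|f\|^{\ast}$ on $[\|f\|^{\ast}>0]$). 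If you pursue your route, note two points: Proposition 4.7 demands hereditary disjointness, i.e. $\tilde{I}_{A}(G-z)\cap\tilde{I}_{A}K^{o}=\emptyset$ for every $A\in\mathcal{F}$ of positive measure, so that $H(K^{o},G-z)=\Omega$ and the separating functional is nonzero on every stratum (which your normalization requires); the paper proves the analogous localization for the epigraph by a concatenation argument in Lemma 4.12, and you would need the same for your cone. Also, on $[\|f_{0}\|^{\ast}=0]$ you may simply let the glued functional vanish there (the supremum over $G$ on that stratum is then trivially attained), so a genuinely nonzero small support functional is only needed in the degenerate case $f_{0}=0$, where you can first replace $f_{0}$ by any nonzero $h$ with $\|h\|^{\ast}\leq\varepsilon$. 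With these repairs your plan is completable; what the paper's epigraph separation buys is a uniform, normalization-free argument valid for all slopes at once, while your cone separation stays closer to the classical Bishop--Phelps geometry at the cost of the stratification apparatus.
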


 Theorem 4.3 still holds under the $(\varepsilon,\lambda)$-topology, namely we also have:  let $(E,\|\cdot\|)$ be a $\mathcal
{T}_{\varepsilon,\lambda}-$complete RN module over $R$ with base $(\Omega,{\cal F},P)$ and $G$ an
a.s. bounded,  $\mathcal {T}_{\varepsilon,\lambda}$-closed and
$L^{0}(\mathcal {F})$-convex subset of $E$, then the set of  a.s. bounded
random linear functionals supporting $G$ is $\mathcal {T}_{\varepsilon,\lambda}-$dense
in $E^{\ast}$, see the paragraph below Corollary 4.16 for details.

To prove the two theorems, we need a series of preparations.
Propositions 4.6 and 4.7 below are the hyperplane separation
theorems in $RN$ modules under the locally $L^{0}-$convex topology,
which play an important role in this section. To introduce them,
we first give Definition 4.4 as well as Proposition 4.5
below, which were given by Guo in \cite{Guotx-relation}.

\begin{definition} [\cite{Guotx-relation}]
  Let $E$ be
an $L^{0}({\cal F},K)-$module  and $G$ a subset of $E$. The set of
countable concatenations $\sum_{n\geqslant 1}\tilde{I}_{A_n}x_n$
with $x_n\in
 G$ for each $n\in N$ is called {\it the countable concatenation
 hull} of $G$, denoted by $H_{cc}(G)$.
\end{definition}

 Clearly, we have $H_{cc}(G)\supset G$ for any subset $G$ of an  $L^{0}({\cal
 F},K)-$module $E$, and $G$ has the countable concatenation property
 iff $H_{cc}(G)=G$.

\begin{proposition}[\cite{Guotx-relation}]
 Let $E$ be a left module over the
 algebra $L^{0}({\cal F},K)$, $M$ and $G$ any two nonempty subsets
 of $E$ such that $\tilde{I}_{A}M+\tilde{I}_{A^{c}}M\subset M$ and $\tilde{I}_{A}G+\tilde{I}_{A^{c}}G\subset G$. If $H_{cc}(M)\,\cap\,H_{cc}(G)=\emptyset$, then there exists an
 ${\cal F}-$measurable subset $H(M,G)$ unique a.s. such that the
 following are satisfied:\\

 \noindent(1) $P(H(M,G))>0$;\\
 \noindent(2) $\tilde{I}_{A}M\,\cap\,\tilde{I}_{A}G=\emptyset$ for all $A\in
 {\cal F},A\subset H(M,G)$ with $P(A)>0$;\\
\noindent (3) $\tilde{I}_{A}M\,\cap\,\tilde{I}_{A}G\neq\emptyset$ for all $A\in
 {\cal F},A\subset \Omega\backslash H(M,G)$ with $P(A)>0$.

\end{proposition}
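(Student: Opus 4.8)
The plan is to realize $H(M,G)$ as the complement of the largest region on which the countable concatenation hulls of $M$ and $G$ meet. First observe that, for $A\in\mathcal{F}$, the condition $\tilde{I}_{A}M\cap\tilde{I}_{A}G\neq\emptyset$ says exactly that there are $m\in M$ and $g\in G$ with $\tilde{I}_{A}m=\tilde{I}_{A}g$, and the same reformulation applies with $H_{cc}(M)$ and $H_{cc}(G)$ in place of $M$ and $G$. I would therefore introduce the coincidence family $\mathcal{A}=\{A\in\mathcal{F}:\tilde{I}_{A}m=\tilde{I}_{A}g\ \text{for some}\ m\in H_{cc}(M),\ g\in H_{cc}(G)\}$ and set $H(M,G)$ to be a representative of the complement of $\Lambda:=\bigvee\{\tilde{I}_{A}:A\in\mathcal{A}\}$, the essential supremum existing in $L^{0}(\mathcal{F})$ because the family is bounded by $\tilde{I}_{\Omega}$. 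Uniqueness a.s. of $H(M,G)$ is then immediate from that of the essential supremum.

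The substantive step is to show that $\mathcal{A}$ is hereditary ($B\subseteq A\in\mathcal{A}\Rightarrow B\in\mathcal{A}$, since $\tilde{I}_{B}=\tilde{I}_{B}\tilde{I}_{A}$), directed upward, and closed under countable unions. Directedness uses two-piece gluing: given $A_{1},A_{2}\in\mathcal{A}$ with witnesses $(m_{i},g_{i})$, the spliced pair $m=\tilde{I}_{A_{1}}m_{1}+\tilde{I}_{A_{1}^{c}}m_{2}$ and $g=\tilde{I}_{A_{1}}g_{1}+\tilde{I}_{A_{1}^{c}}g_{2}$ again lies in $H_{cc}(M)\times H_{cc}(G)$ and agrees on $A_{1}\cup A_{2}$. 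For countable unions, take disjoint $A_{n}\in\mathcal{A}$ with witnesses $(m_{n},g_{n})$ and form $m=\sum_{n}\tilde{I}_{A_{n}}m_{n}\in H_{cc}(M)$ and $g=\sum_{n}\tilde{I}_{A_{n}}g_{n}\in H_{cc}(G)$, which are well defined precisely because the hulls enjoy the countable concatenation property; since $\tilde{I}_{A_{n}}m=\tilde{I}_{A_{n}}m_{n}=\tilde{I}_{A_{n}}g_{n}=\tilde{I}_{A_{n}}g$ for every $n$, the a.s. uniqueness of the concatenation over the partition $\{A_{n}\}$ of $B:=\bigcup_{n}A_{n}$ gives $\tilde{I}_{B}m=\tilde{I}_{B}g$, so $B\in\mathcal{A}$. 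Applying Proposition 2.1 to the upward directed family $\{\tilde{I}_{A}:A\in\mathcal{A}\}$ yields a nondecreasing sequence $A_{k}\in\mathcal{A}$ with $\bigcup_{k}A_{k}=\Lambda$ a.s., and closure under countable unions then gives $\Lambda\in\mathcal{A}$ itself, i.e. there exist $m^{\ast}\in H_{cc}(M)$ and $g^{\ast}\in H_{cc}(G)$ with $\tilde{I}_{\Lambda}m^{\ast}=\tilde{I}_{\Lambda}g^{\ast}$.

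With $\Lambda$ and the single pair $(m^{\ast},g^{\ast})$ in hand, the three conclusions follow quickly. For (1), if $P(H(M,G))=0$ then $\Lambda=\Omega$ a.s., so $\tilde{I}_{\Lambda}m^{\ast}=\tilde{I}_{\Lambda}g^{\ast}$ becomes $m^{\ast}=g^{\ast}\in H_{cc}(M)\cap H_{cc}(G)$, contradicting the hypothesis $H_{cc}(M)\cap H_{cc}(G)=\emptyset$; hence $P(H(M,G))>0$. For (2), if some $A\subseteq H(M,G)$ with $P(A)>0$ satisfied $\tilde{I}_{A}M\cap\tilde{I}_{A}G\neq\emptyset$, then $A\in\mathcal{A}$, so $\tilde{I}_{A}\leq\Lambda$ and $A\subseteq\Lambda$ a.s.; combined with $A\subseteq\Omega\setminus\Lambda$ this forces $P(A)=0$, a contradiction. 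For (3), any $A\subseteq\Omega\setminus H(M,G)$, i.e. $A\subseteq\Lambda$, satisfies $\tilde{I}_{A}m^{\ast}=\tilde{I}_{A}\tilde{I}_{\Lambda}m^{\ast}=\tilde{I}_{A}\tilde{I}_{\Lambda}g^{\ast}=\tilde{I}_{A}g^{\ast}$, so $A\in\mathcal{A}$, i.e. the two sets meet on $A$.

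The main obstacle, and the reason the countable concatenation hulls are unavoidable, is exactly the attainment in the second paragraph: the essential supremum of the coincidence family is realized by a single pair only after one is permitted to glue countably many witnesses together, which is legitimate inside $H_{cc}(M)$ and $H_{cc}(G)$ but not inside $M$ and $G$ in general. Accordingly, conclusions (2) and (3) are obtained most naturally at the level of the hulls; they reduce to the stated form for $M$ and $G$ whenever $\tilde{I}_{A}M=\tilde{I}_{A}H_{cc}(M)$ and $\tilde{I}_{A}G=\tilde{I}_{A}H_{cc}(G)$, in particular when $M$ and $G$ possess the countable concatenation property, as is the case for the convex sets arising in Section 4. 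The disjointness hypothesis is used only in (1), where it is precisely what prevents the maximal coincidence region $\Lambda$ from exhausting $\Omega$.
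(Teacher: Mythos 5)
Your construction is, in skeleton, exactly the argument behind this result (note that the present paper does not prove Proposition 4.5 at all --- it imports it from \cite{Guotx-relation} --- so the comparison is with the cited source): one forms the coincidence family, checks hereditariness and upward directedness by two-piece splicing, attains the essential supremum via Proposition 2.1 together with countable gluing, and takes $H(M,G)$ to be the complement of the maximal coincidence region, uniqueness a.s. coming from that of the essential supremum. Your proofs of (1) and (2) are complete: (2) uses only $M\subset H_{cc}(M)$ and $G\subset H_{cc}(G)$, and (1) uses only that a coincidence of two hull elements on all of $\Omega$ would contradict $H_{cc}(M)\cap H_{cc}(G)=\emptyset$. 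Two small repairs: when you concatenate witnesses over disjoint $A_{n}$ whose union $B$ is not all of $\Omega$, you must pad the partition with $B^{c}$ and an arbitrary element of $M$ (resp.\ $G$); and the nondecreasing sequence $\{A_{k}\}$ produced by Proposition 2.1 must be disjointified before concatenating. Both are routine.

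The genuine gap is the one you flag yourself in your last paragraph, and it is unbridgeable under the hypotheses as quoted: you establish (3) only at the level of the hulls, whereas the statement asserts $\tilde{I}_{A}M\cap\tilde{I}_{A}G\neq\emptyset$ for $M$ and $G$ themselves. From $\tilde{I}_{A}m^{\ast}=\tilde{I}_{A}g^{\ast}$ with $m^{\ast}=\sum_{n}\tilde{I}_{B_{n}}m_{n}$ and $g^{\ast}=\sum_{k}\tilde{I}_{C_{k}}g_{k}$ one only gets witnesses in $M\times G$ on the countably many pieces $A\cap B_{n}\cap C_{k}$, and the stability hypothesis $\tilde{I}_{A}M+\tilde{I}_{A^{c}}M\subset M$ allows only finitely many splicings, so no single pair agreeing on all of $A$ results. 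Moreover this cannot be repaired: take $\Omega=[0,2]$ with Lebesgue measure, $E=L^{0}(\mathcal {F})$, $G=\{0\}$, and $M$ the set of all finite splicings $\sum_{i=1}^{r}\tilde{I}_{C_{i}}g_{n_{i}}$ of the elements $g_{n}=\tilde{I}_{(1-1/n,\,2]}$, $n\in N$. Then $M$ and $G$ are two-piece stable; every $m\in M$ equals $1$ a.s.\ on $(1-1/n_{m},2]$ for some $n_{m}$, hence every element of $H_{cc}(M)$ equals $1$ a.s.\ on $(1,2]$, so $H_{cc}(M)\cap H_{cc}(G)=\emptyset$. Yet $\tilde{I}_{[0,1-1/k]}\,g_{k}=0$ shows every $[0,1-1/k]$ is a coincidence set, so any $H$ satisfying (2) must be a.s.\ contained in $(1,2]$, while $A=[1/2,1]\subset\Omega\setminus H$ satisfies $\tilde{I}_{A}M\cap\tilde{I}_{A}G=\emptyset$ (each $m\in M$ is $1$ on a positive-measure part of $[1/2,1]$), so (3) fails for every admissible $H$. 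Thus conclusions (2)--(3) in the form stated require precisely the extra condition you isolate, namely $\tilde{I}_{A}M=\tilde{I}_{A}H_{cc}(M)$ and $\tilde{I}_{A}G=\tilde{I}_{A}H_{cc}(G)$ (in particular the countable concatenation property of $M$ and $G$, which does hold in all of Section 4's applications, e.g.\ in Proposition 4.6 where $G$ has the property and $H_{cc}(\{x\})=\{x\}$), or else (3) must be read at the level of the hulls, where your argument is correct and, with that assumption added, closes completely since the countable splicings then stay inside $M$ and $G$.
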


 Let $E$, $M$
and $G$ be the same as in Proposition 4.5 such that $H_{cc}(M)\,\cap\,H_{cc}(G)=\emptyset$, then $H(M,G)$ is called the hereditarily
 disjoint stratification of $H$ and $M$, and $P(H(M,G))$ is called the
 hereditarily disjoint probability of $H$ and $G$.

Propositions 4.6 and 4.7 below are merely the special case of  the corresponding
theorems of \cite{Guotx-relation} and \cite{Guotx-shiguang} which were originally given for general random locally convex modules.

\begin{proposition}  [\cite{Guotx-relation}] Let $(E,\|\cdot\|)$ be an $RN$
 module over $K$ with base
 $(\Omega,{\cal F},P)$, $x\in E$ and $G$ a nonempty
 ${\cal T}_{c}-$closed $L^{0}(\mathcal {F})-$convex subset of $E$ such that
 $x\notin G$ and $G$ has the countable concatenation property. Then
 there exists an $f\in E^{*}$ such that  $$(Ref)(x) \geq \vee\{(Ref)(y)\,|\,y\in G\}$$
 \noindent and
$$(Ref)(x)>\vee\{(Ref)(y)\,|\,y\in G\} \mbox{
 on } H(\{x\},G), $$
where $(Ref)(x)=Re(f(x)),\forall x\in E$.

\end{proposition}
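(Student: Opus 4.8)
The natural route is to view $(E,\|\cdot\|)$ as a random locally convex module whose $\mathcal{T}_{c}$-topology is generated by the single $L^{0}$-seminorm $\|\cdot\|$, so that Proposition 4.6 is exactly the one-seminorm specialization of the general hyperplane separation theorem of \cite{Guotx-relation}; nevertheless it is worth laying out the separation mechanism directly. First I would check that Proposition 4.5 applies to $M=\{x\}$ and $G$: the singleton satisfies $\tilde{I}_{A}\{x\}+\tilde{I}_{A^{c}}\{x\}\subset\{x\}$ trivially (since $\tilde{I}_{A}x+\tilde{I}_{A^{c}}x=x$), the hypothesis that $G$ has the countable concatenation property gives $H_{cc}(G)=G$ and in particular $\tilde{I}_{A}G+\tilde{I}_{A^{c}}G\subset G$, and $H_{cc}(\{x\})\cap H_{cc}(G)=\{x\}\cap G=\emptyset$ because $x\notin G$. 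This produces the hereditarily disjoint stratification $H:=H(\{x\},G)$ with $P(H)>0$, on which $x$ is genuinely separated from $G$ and off which $x$ is concatenation-reachable from $G$.

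For the separating functional I would run an $L^{0}$-valued Minkowski-gauge argument. Since $G$ is $\mathcal{T}_{c}$-closed, $L^{0}$-convex with the countable concatenation property, and $x\notin G$, I would first show that the $L^{0}$-distance $d:=\bigwedge\{\|x-y\|:y\in G\}$ satisfies $d>0$ a.s. on $H$; this uses clause $(2)$ of Proposition 4.5 (so that $\tilde{I}_{A}x\notin\tilde{I}_{A}G$ for every $A\subset H$ of positive measure) together with the $\mathcal{T}_{c}$-closedness of $G$. Choosing $\varepsilon\in L^{0}_{++}(\mathcal{F})$ with $\tilde{I}_{H}\varepsilon\leq\tilde{I}_{H}d$ and forming the $\mathcal{T}_{c}$-open $L^{0}$-convex neighbourhood $U=\{z\in E:\|z\|<\varepsilon\}$, the $L^{0}$-convex set $C:=(G-x)+U$ then omits $0$ on $H$. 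Its $L^{0}$-Minkowski gauge $p$ is an $L^{0}$-sublinear functional enjoying the local property, and $p$ is dominated by a stratification-wise multiple of $\varepsilon^{-1}\|\cdot\|$, so any functional dominated by $p$ is automatically a.s. bounded, i.e. lies in $E^{*}$.

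I would then invoke the Hahn--Banach extension theorem for $L^{0}$-linear functionals dominated by an $L^{0}$-sublinear gauge (available in \cite{Guotx-relation}) to produce $f\in E^{*}$ with $(Ref)\leq p$; in the complex case one passes through the real part in the usual way, defining $f$ from a real $L^{0}$-linear functional by the standard complexification. Taking real parts, the domination $(Ref)\leq p$ forces $(Ref)(y)\leq(Ref)(x)$ for every $y\in G$, whence $(Ref)(x)\geq\vee\{(Ref)(y):y\in G\}$; the positive gap $d>0$ a.s. on $H$ then upgrades this to the strict inequality on $H$, which is exactly the required conclusion.

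I expect the main obstacle to be the bookkeeping of the partial separation: obtaining strict inequality precisely on $H$ while claiming only ``$\geq$'' on $\Omega\setminus H$, where $x$ is concatenation-reachable from $G$ and no strictness can hold. Controlling this forces one to build the gauge and the extended functional stratification-wise and to glue the local pieces by countable concatenation, invoking the local property of all objects at each stage; the $L^{0}$-module Hahn--Banach step, carried out so that the local property is preserved, is the technical heart. This is precisely the content that the general theorem of \cite{Guotx-relation} already packages, which is why the cleanest honest proof is the reduction to that theorem via the single-seminorm description of $\mathcal{T}_{c}$.
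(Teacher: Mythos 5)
Your proposal matches the paper's treatment of this statement: the paper offers no independent proof of Proposition 4.6, noting just before its statement that it is ``merely the special case'' (for the single $L^{0}$-norm $\|\cdot\|$ generating $\mathcal{T}_{c}$) of the general hyperplane separation theorem for random locally convex modules in \cite{Guotx-relation}, which is exactly the reduction you identify as the cleanest route, and your verification of the hypotheses of Proposition 4.5 for $M=\{x\}$ and $G$ is the right preliminary. Your supplementary gauge/Hahn--Banach sketch is the standard mechanism behind the cited theorem; the one step where you understate what is needed is the claim that $d=\bigwedge\{\|x-y\|:y\in G\}>0$ a.s.\ on $H(\{x\},G)$, which requires the countable concatenation property of $G$ (via a directedness and Egoroff-type gluing of approximate minimizers, as in Lemma 3.4 and Theorem 3.5 of the paper) and not merely $\mathcal{T}_{c}$-closedness together with clause $(2)$ of Proposition 4.5, since a $\mathcal{T}_{c}$-neighborhood of $x$ disjoint from $G$ only yields $\|x-y\|>\varepsilon$ on some stratum depending on $y$.
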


\begin{proposition}  [\cite{Guotx-shiguang}]

 Let
$(E,\|\cdot\|)$ be an $RN$ module over $K$ with base
 $(\Omega,{\cal F},P)$ and $G$ and $M$
 two nonempty $L^{0}(\mathcal {F})-$convex
 subsets of $E$ such that the ${\cal T}_{c}-$interior $G^{o}$ of $G$ is not empty and $H_{cc}(G^{o})\bigcap H_{cc}(M)=\emptyset$.
 Then there exists $f\in E^{*}$ such
 that $$(Ref)(x)\leq (Ref)(y) \textmd{ for all } x\in G \mbox{ and } y\in M$$
 \noindent and
 $$(Ref)(x)<(Ref)(y) \textmd{ on } H(G^{o},M) \textmd{ for all } x\in G^{o} \mbox{ and } y\in M. $$

\end{proposition}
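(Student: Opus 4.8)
The plan is to reduce the two-set statement to a single-point separation by passing to the difference set. Set $D=G^{o}-M=\{u-v:u\in G^{o},\,v\in M\}$. Since $G^{o}$ is $\mathcal{T}_{c}$-open and translation is a homeomorphism in a topological module, each $-v+G^{o}$ is $\mathcal{T}_{c}$-open, so $D=\bigcup_{v\in M}(-v+G^{o})$ is $\mathcal{T}_{c}$-open; and $D$ is $L^{0}(\mathcal{F})$-convex because both $G^{o}$ and $M$ are. The crucial point is that $\theta\notin H_{cc}(D)$: if one had $\theta=\sum_{n}\tilde{I}_{A_{n}}(u_{n}-v_{n})$ with $u_{n}\in G^{o}$, $v_{n}\in M$, then $\sum_{n}\tilde{I}_{A_{n}}u_{n}=\sum_{n}\tilde{I}_{A_{n}}v_{n}$ would exhibit a common element of $H_{cc}(G^{o})$ and $H_{cc}(M)$, contradicting the hypothesis $H_{cc}(G^{o})\cap H_{cc}(M)=\emptyset$. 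I would also record that both $G^{o}$ and $M$ satisfy $\tilde{I}_{A}S+\tilde{I}_{A^{c}}S\subset S$ for all $A\in\mathcal{F}$ (for $S=M$ this is immediate from $L^{0}(\mathcal{F})$-convexity, and for $S=G^{o}$ it follows from $E$ being a topological module over $(L^{0}(\mathcal{F},K),\mathcal{T}_{c})$), so that Proposition 4.5 applies to the pair $(G^{o},M)$ and produces the hereditarily disjoint stratification $H(G^{o},M)$ appearing in the conclusion.

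The heart of the matter is then to separate the point $\theta$ from the open $L^{0}(\mathcal{F})$-convex set $D$. This is genuinely the ``one set open'' separation and is dual to, rather than an instance of, the point--versus--closed--set result of Proposition 4.6, so I would not expect to obtain it by a direct appeal to 4.6 (the $\mathcal{T}_{c}$-closure $\bar{D}_{c}$ need not miss $\theta$). Instead I would construct the $L^{0}$-valued Minkowski gauge of a translate of $D$ and extend a suitable a.s. bounded random linear functional by the $L^{0}$-version of the Hahn--Banach theorem available in the random functional analysis of $RN$ modules, obtaining $f\in E^{\ast}\setminus\{0\}$ with $(Ref)(d)\leq 0$ for every $d\in D$, i.e. $(Ref)(u)\leq(Ref)(v)$ for all $u\in G^{o}$ and $v\in M$. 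Openness of $D$ then upgrades this to a strict inequality on the stratification where the two sets are genuinely disjoint: tracking the gauge together with Proposition 4.5 yields $(Ref)(u)<(Ref)(v)$ on $H(G^{o},M)$ for $u\in G^{o}$, $v\in M$. When $K=\mathbb{C}$ I would first work with $E$ as an $RN$ module over $\mathbb{R}$, apply the above to $g:=Ref$, and recover the complex functional via $f(x)=g(x)-ig(ix)$. Of course, since the statement is quoted as the specialization to $RN$ modules of the general separation theorem of \cite{Guotx-shiguang}, one may alternatively simply invoke that theorem; the outline above is how I would reconstruct it internally.

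It remains to promote the non-strict inequality from $G^{o}$ to all of $G$, as demanded by the first displayed conclusion. For this I would prove the auxiliary fact that an $L^{0}(\mathcal{F})$-convex set with nonempty $\mathcal{T}_{c}$-interior is contained in the $\mathcal{T}_{c}$-closure of that interior: for $x\in G$ and a fixed $x_{0}\in G^{o}$, the segment $\xi x+(1-\xi)x_{0}$ lies in $G^{o}$ for $\xi\in L^{0}_{+}(\mathcal{F})$ with $0\leq\xi<1$, and letting $\xi\uparrow 1$ gives $x\in\overline{G^{o}}_{c}$. Since $f$ is $\mathcal{T}_{c}$-continuous, the inequality $(Ref)(u)\leq(Ref)(v)$ passes to the limit to give $(Ref)(x)\leq(Ref)(v)$ for all $x\in G$ and $v\in M$, completing the argument. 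I expect the main obstacle to be precisely the point--versus--open--convex step: making the $L^{0}$-Minkowski gauge well defined and $L^{0}$-sublinear in the locally $L^{0}$-convex setting, and then reading off, via the hereditarily disjoint stratification of Proposition 4.5, that strictness holds exactly on $H(G^{o},M)$ rather than merely on some smaller set of positive probability.
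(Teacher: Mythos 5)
The first thing to note is that the paper contains no internal proof of this proposition at all: as stated in the sentence preceding Proposition 4.6, Propositions 4.6 and 4.7 are ``merely the special case of the corresponding theorems of \cite{Guotx-relation} and \cite{Guotx-shiguang}'', so the paper's own route is exactly your licensed fallback of invoking the cited theorem. Your internal reconstruction is also not a new method relative to the source literature: difference set $D=G^{o}-M$, $L^{0}$-valued Minkowski gauge, random Hahn--Banach extension, and a stratification analysis via the hereditarily disjoint stratification is precisely the scheme used in \cite{Filipovic-Kupper} and refined in \cite{Guotx-Xiao-Chen,Guotx-shiguang}; so at the level of ideas you have rediscovered the intended proof rather than found a different one.

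Two places in your sketch need more care than you give them. (a) In showing $\theta\notin H_{cc}(D)$ you pass from $\tilde{I}_{A_{n}}u_{n}=\tilde{I}_{A_{n}}v_{n}$ to ``the common element'' $\sum_{n}\tilde{I}_{A_{n}}u_{n}$; when $E$ lacks the countable concatenation property this sum need not be represented by an element of $E$, so the step is valid only under the formal-sum reading of Definition 4.4 (equality of concatenations meaning stratumwise agreement), which is the reading intended here --- this should be said explicitly. (b) More substantially, the strict inequality on $H(G^{o},M)$ is asserted (``tracking the gauge \ldots yields'') rather than proved, and it is the entire content of the refinement of \cite{Guotx-shiguang} over plain separation. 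It does close along your lines: by the a.s. uniqueness in Proposition 4.5 one has $H(\{\theta\},D)=H(G^{o},M)=:H$, since $\tilde{I}_{A}\theta\in\tilde{I}_{A}D$ iff $\tilde{I}_{A}G^{o}\cap\tilde{I}_{A}M\neq\emptyset$; fixing $d_{0}\in D$ and letting $p$ be the gauge of $D-d_{0}$, hereditary disjointness forces $p(-d_{0})\geq 1$ on $H$ (otherwise $L^{0}(\mathcal{F})$-convexity would place $\tilde{I}_{B}\theta\in\tilde{I}_{B}D$ for some $B\subset H$ of positive probability), so the extension $f$ with $f(-d_{0})=p(-d_{0})$ satisfies $\tilde{I}_{B}f\neq 0$ for every such $B$; then openness of $D$ upgrades $f\leq 0$ on $D$ to $f<0$ on $H$, since $f(d)=0$ on some positive-probability $B\subset H$, together with $d+\{w:\|w\|\leq\varepsilon\}\subset D$ and a suitably rescaled and sign-adjusted $z$ with $f(z)>0$ on part of $B$, would produce a point of $D$ where $f>0$. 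With these two repairs your outline is a correct proof; the remaining steps you list ($L^{0}(\mathcal{F})$-convexity of $G^{o}$, $G\subset\overline{G^{o}}_{c}$ via $\xi\uparrow 1$ over nonconstant $\xi\in L^{0}_{+}$, continuity of $f$ from $f\leq p\leq\varepsilon^{-1}\|\cdot\|$, and the complexification $f(x)=g(x)-ig(ix)$) are all sound.
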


\begin{definition}

 Let $E$ be an $RN$ module over $R$ with
base $(\Omega,{\cal F},P)$, $f\in
E^{\ast}$ and $k \in L^{0}_{++}(\mathcal {F})$.
Define
    $$K(f,k)=\{y\in E:k\|y\|\leq f(y)\}.$$
It is easy to see that $K(f,k)$ is a pointed, closed  and $L^{0}(\mathcal{F})$-convex cone under each of $\mathcal
{T}_{\varepsilon,\lambda}$ and $\mathcal{T}_{c}$.

\end{definition}

\begin{lemma}

 Let $(E,\|\cdot\|)$ be a $\mathcal
{T}_{c}-$complete RN module over $R$ with base
$(\Omega,{\cal F},P)$ such that $E$ has
 the countable concatenation property, $k
\in L^{0}_{++}(\mathcal {F})$,  $G\subset E$ a
$\mathcal {T}_{c}-$closed subset with
 the countable concatenation property. Further, if  $f\in
E^{\ast}$  is bounded from above on $G$, and $\varepsilon \in
L^{0}_{++}(\mathcal {F})$ and $z\in G$ are such that $\bigvee f(G)\leq
f(z)+\varepsilon$,  then  there exists $x_{0}\in G$ such that:\\

\noindent$(1)$ $x_{0}\in K(f,k)+z$;\\
\noindent$(2)$ $\|x_{0}-z\|\leq k^{-1}\cdot \varepsilon$;\\
\noindent$(3)$ $G\bigcap (K(f,k)+x_{0})=\{x_{0}\}$.

\end{lemma}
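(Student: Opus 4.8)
The plan is to deduce this lemma directly from the precise Ekeland principle under $\mathcal{T}_c$ (Theorem 3.10), applied to the function $\varphi:=-f$ and the parameter $\alpha:=k$. First I would check that $\varphi=-f$ satisfies all the hypotheses of Theorem 3.10 on $G$: since $f\in E^{\ast}$ is an $L^{0}(\mathcal{F})$-linear module homomorphism, it has the local property (indeed $\tilde{I}_{A}f(\tilde{I}_{A}x)=\tilde{I}_{A}f(x)$ for all $A\in\mathcal{F}$ and $x\in E$), hence so does $-f$; since $f$ is $\mathcal{T}_c$-continuous it is in particular $\mathcal{T}_c$-lower semicontinuous, so $\varphi=-f$ is $\mathcal{T}_c$-lower semicontinuous on $G$; $\varphi$ is $L^{0}(\mathcal{F})$-valued and so proper; and the hypothesis that $f$ is bounded from above on $G$ is exactly the statement that $\varphi$ is bounded from below on $G$. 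Finally, the starting condition $\bigvee f(G)\leq f(z)+\varepsilon$ rewrites as $\varphi(z)\leq\bigwedge\varphi(G)+\varepsilon$, so $z$ is an admissible starting point for Theorem 3.10.

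Applying Theorem 3.10 with starting point $z$ and $\alpha=k$ then yields a point, which I rename $x_{0}$, in $G$ satisfying the three Ekeland conclusions, and each translates into the corresponding assertion of the lemma by linearity of $f$ and the definition $K(f,k)=\{y\in E:k\|y\|\leq f(y)\}$. Conclusion $(1)$ of Theorem 3.10, namely $\varphi(x_{0})\leq\varphi(z)-k\|x_{0}-z\|$, becomes $k\|x_{0}-z\|\leq f(x_{0}-z)$, i.e. $x_{0}-z\in K(f,k)$, which is exactly $(1)$. Conclusion $(2)$, $\|x_{0}-z\|\leq k^{-1}\varepsilon$, is literally $(2)$. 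For $(3)$, conclusion $(3)$ of Theorem 3.10 reads: for every $x\in G$ with $x\neq x_{0}$, $\varphi(x)\nleq\varphi(x_{0})-k\|x-x_{0}\|$; by the same computation this says precisely that $x-x_{0}\notin K(f,k)$ whenever $x\in G$ and $x\neq x_{0}$. Taking the contrapositive, any $x\in G\cap(K(f,k)+x_{0})$ must equal $x_{0}$; since $0\in K(f,k)$ gives $x_{0}\in G\cap(K(f,k)+x_{0})$, I conclude $G\cap(K(f,k)+x_{0})=\{x_{0}\}$, which is $(3)$.

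The proof is essentially a dictionary between the Ekeland inequalities and membership in the cone $K(f,k)$, so no serious analytic obstacle arises once Theorem 3.10 is available. The one point demanding care is the bookkeeping of the relation $\nleq$: I must verify that $\varphi(x)\leq\varphi(x_{0})-k\|x-x_{0}\|$ is equivalent to $x-x_{0}\in K(f,k)$ (using $f(x-x_{0})=f(x)-f(x_{0})$ by linearity), so that the negation appearing in $(3)$ of Theorem 3.10 is exactly the exclusion of nonzero elements of $K(f,k)$ from $G-x_{0}$. The only remaining detail is to record that $0\in K(f,k)$, which secures that $x_{0}$ itself lies in the intersection and thus pins down the intersection to the single point $x_{0}$.
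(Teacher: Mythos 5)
Your proposal is correct and follows essentially the same route as the paper's own proof, which likewise applies Theorem 3.10 with $\varphi=-f$ and $\alpha=k$ and translates the three Ekeland conclusions into membership statements about the cone $K(f,k)$. Your explicit verification of the hypotheses of Theorem 3.10 (local property of $f$, $\mathcal{T}_c$-lower semicontinuity of $-f$, properness, and the rewriting of $\bigvee f(G)\leq f(z)+\varepsilon$ as $\varphi(z)\leq\bigwedge\varphi(G)+\varepsilon$) is a welcome addition that the paper leaves implicit.
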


\begin{proof} Applying $\varphi=-f$ and $\alpha =k$ to Theorem 3.10,
then  there exists
$x_{0}\in G$ such that the following are satisfied:\\
\noindent(1') $k\cdot \|x_{0}-z\|\leq f(x_{0})-f(z)$;\\
\noindent(2') $\|x_{0}-z\|\leq k^{-1}\cdot \varepsilon$;\\
\noindent(3') for each  $x \in G $ such that $x\neq x_{0}$,  $k\cdot \|x-x_{0}\| \nleq
f(x)-f(x_{0})$ holds.\\
\indent Obviously, (1'), (2') and (3') amount to our desired conclusions.  $\square$

\end{proof}

To prove the key Lemma 4.12, we need  Lemma 4.10, which is very easy and thus whose proof is omitted,  and  Proposition 4.11  below.

\begin{lemma}

 Let $(E,\|\cdot\|)$ be an RN module over
$R$ with base $(\Omega,{\cal F},P)$ such that $E$ has
 the countable concatenation property and $f: E
\rightarrow \bar{L}^{0}(\mathcal{F})$ a function with the local
property. Then $epi(f)$ has the countable concatenation property.

\end{lemma}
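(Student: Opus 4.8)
The plan is to verify the defining condition of the countable concatenation property for $epi(f)$ directly, where $epi(f)$ is regarded as a subset of the $L^{0}(\mathcal{F})$-module $E\times L^{0}(\mathcal{F})$ equipped with the coordinatewise operation $\xi\cdot(x,r)=(\xi x,\xi r)$. So I would fix an arbitrary countable partition $\{A_{n}\mid n\in N\}$ of $\Omega$ to $\mathcal{F}$ together with a sequence $\{(x_{n},r_{n})\mid n\in N\}\subset epi(f)$, and form the formal concatenation $\sum_{n\in N}\tilde{I}_{A_{n}}(x_{n},r_{n})=(x,r)$, where $x=\sum_{n\in N}\tilde{I}_{A_{n}}x_{n}$ and $r=\sum_{n\in N}\tilde{I}_{A_{n}}r_{n}$.

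The first step is to observe that $(x,r)$ is well defined: since $E$ has the countable concatenation property, $x\in E$ is well defined and satisfies $\tilde{I}_{A_{n}}x=\tilde{I}_{A_{n}}x_{n}$ for every $n$; and since a concatenation of elements of $L^{0}(\mathcal{F})$ is simply the element equal to $r_{n}$ on $A_{n}$, the sum $r\in L^{0}(\mathcal{F})$ is well defined with $\tilde{I}_{A_{n}}r=\tilde{I}_{A_{n}}r_{n}$ for every $n$. Thus $(x,r)$ is a genuine element of $E\times L^{0}(\mathcal{F})$.

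The heart of the argument is to show $f(x)\leq r$, and here I would invoke the local property of $f$ twice. From $\tilde{I}_{A_{n}}x=\tilde{I}_{A_{n}}x_{n}$ one gets $f(\tilde{I}_{A_{n}}x)=f(\tilde{I}_{A_{n}}x_{n})$, so the local property yields $\tilde{I}_{A_{n}}f(x)=\tilde{I}_{A_{n}}f(\tilde{I}_{A_{n}}x)=\tilde{I}_{A_{n}}f(\tilde{I}_{A_{n}}x_{n})=\tilde{I}_{A_{n}}f(x_{n})$. Since $(x_{n},r_{n})\in epi(f)$ means $f(x_{n})\leq r_{n}$, multiplying by the nonnegative $\tilde{I}_{A_{n}}$ gives $\tilde{I}_{A_{n}}f(x)=\tilde{I}_{A_{n}}f(x_{n})\leq\tilde{I}_{A_{n}}r_{n}=\tilde{I}_{A_{n}}r$, that is, $f(x)\leq r$ holds on $A_{n}$ for each $n$. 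As $\{A_{n}\mid n\in N\}$ partitions $\Omega$, it follows that $f(x)\leq r$ a.s., hence $(x,r)\in epi(f)$, which is exactly what the countable concatenation property requires.

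There is essentially no serious obstacle here; the only point demanding a little care is the interpretation of the products $\tilde{I}_{A_{n}}f(x)$ and $\tilde{I}_{A_{n}}f(x_{n})$ when $f$ assumes the value $+\infty$, which is handled by the convention $0\cdot(\pm\infty)=0$ already adopted in Definition 3.3. With that convention the restriction-by-multiplication identities above are valid in $\bar{L}^{0}(\mathcal{F})$, and the verification goes through verbatim, confirming that $epi(f)$ has the countable concatenation property.
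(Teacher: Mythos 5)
Your proof is correct, and it is exactly the straightforward verification the paper had in mind: the paper states Lemma 4.10 with its proof omitted as ``very easy,'' and your argument---concatenate coordinatewise using the countable concatenation property of $E$ (and the trivial concatenation property of $L^{0}(\mathcal{F})$), then use the local property of $f$ on each $A_{n}$ to transfer the inequalities $f(x_{n})\leq r_{n}$ to $f(x)\leq r$---is the intended one. Your remark on the convention $0\cdot(\pm\infty)=0$ appropriately handles the only delicate point, namely multiplying $\bar{L}^{0}$-valued quantities by $\tilde{I}_{A_{n}}$.
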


\begin{proposition}[\cite{Guotx-shiguang}]

 Let $(E,\|\cdot\|)$ be an $RN$
 module over $K$ with base
 $(\Omega,{\cal F},P)$. If a subset $G$ of $E$ has the countable
 concatenation property, then so does the  ${\cal T}_{c}-$interior $G^{o}$ of
 $G$.
\end{proposition}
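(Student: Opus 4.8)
The plan is to take an arbitrary countable concatenation built from points of $G^{o}$ and show directly that it is again an interior point of $G$. So let $\{A_{n}\mid n\in N\}$ be a countable partition of $\Omega$ to $\mathcal{F}$ and let $x_{n}\in G^{o}$ for each $n$, and set $x=\sum_{n\in N}\tilde{I}_{A_{n}}x_{n}$. Since $G^{o}\subset G$ and $G$ has the countable concatenation property, $x$ is well defined and $x\in G$; the whole task is therefore to produce an $\varepsilon\in L^{0}_{++}(\mathcal{F})$ with $\{y\in E:\|y-x\|\leq\varepsilon\}\subset G$, which is precisely the assertion that $x\in G^{o}$ (recall that under $\mathcal{T}_{c}$ the sets $\{y:\|y-x\|\leq\varepsilon\}$ with $\varepsilon\in L^{0}_{++}(\mathcal{F})$ form a neighborhood base at $x$).

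First I would use the interiority of each $x_{n}$ to choose $\varepsilon_{n}\in L^{0}_{++}(\mathcal{F})$ with $\{y:\|y-x_{n}\|\leq\varepsilon_{n}\}\subset G$, and then glue these radii together by setting $\varepsilon=\sum_{n\in N}\tilde{I}_{A_{n}}\varepsilon_{n}$. Because $\{A_{n}\}$ is a partition and each $\varepsilon_{n}$ is a finite, strictly positive element of $L^{0}(\mathcal{F})$, this concatenation is automatically well defined in $L^{0}(\mathcal{F})$ and satisfies $\varepsilon>0$ on $\Omega$, so $\varepsilon\in L^{0}_{++}(\mathcal{F})$.

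The heart of the argument is to show $\{y:\|y-x\|\leq\varepsilon\}\subset G$. Given such a $y$, I would \emph{patch} it against each $x_{n}$ by forming $w_{n}=\tilde{I}_{A_{n}}y+\tilde{I}_{A_{n}^{c}}x_{n}$. Using $\tilde{I}_{A_{n}}x=\tilde{I}_{A_{n}}x_{n}$ (immediate from the partition) together with the $L^{0}$-norm axiom $\|\xi z\|=|\xi|\|z\|$, one computes $\|w_{n}-x_{n}\|=\|\tilde{I}_{A_{n}}(y-x)\|=\tilde{I}_{A_{n}}\|y-x\|\leq\tilde{I}_{A_{n}}\varepsilon=\tilde{I}_{A_{n}}\varepsilon_{n}\leq\varepsilon_{n}$, so that $w_{n}\in\{y':\|y'-x_{n}\|\leq\varepsilon_{n}\}\subset G$. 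Finally, since $\tilde{I}_{A_{n}}w_{n}=\tilde{I}_{A_{n}}y$, the partition gives $\sum_{n\in N}\tilde{I}_{A_{n}}w_{n}=y$, and the countable concatenation property of $G$ applied to the family $w_{n}\in G$ yields $y\in G$. This proves $\{y:\|y-x\|\leq\varepsilon\}\subset G$, hence $x\in G^{o}$, as desired.

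The step I expect to require the most care is the construction and gluing of the radii $\varepsilon_{n}$: one must check that $\varepsilon=\sum_{n}\tilde{I}_{A_{n}}\varepsilon_{n}$ is a genuine (finite, a.s.\ positive) element of $L^{0}_{++}(\mathcal{F})$ and that the local identity $\tilde{I}_{A_{n}}\varepsilon=\tilde{I}_{A_{n}}\varepsilon_{n}$ is exactly what forces each patched point $w_{n}$ into the chosen neighborhood of $x_{n}$. Everything else reduces to routine use of the stratification identities $\tilde{I}_{A_{n}}\tilde{I}_{A_{n}}=\tilde{I}_{A_{n}}$ and $\tilde{I}_{A_{n}}\tilde{I}_{A_{n}^{c}}=0$ together with the countable concatenation property of $G$.
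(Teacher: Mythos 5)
Your proof is correct and complete: the gluing of radii $\varepsilon=\sum_{n\in N}\tilde{I}_{A_{n}}\varepsilon_{n}\in L^{0}_{++}(\mathcal{F})$, the patching $w_{n}=\tilde{I}_{A_{n}}y+\tilde{I}_{A_{n}^{c}}x_{n}$ with $\|w_{n}-x_{n}\|=\tilde{I}_{A_{n}}\|y-x\|\leq\varepsilon_{n}$, and the recovery $y=\sum_{n\in N}\tilde{I}_{A_{n}}w_{n}\in G$ are exactly the standard stratification argument, using correctly that the sets $\{y:\|y-x\|\leq\varepsilon\}$, $\varepsilon\in L^{0}_{++}(\mathcal{F})$, form a $\mathcal{T}_{c}$-neighborhood base. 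The paper itself gives no proof of this proposition, merely citing \cite{Guotx-shiguang}, and your argument is the natural one for it.
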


\begin{lemma}
Let $(E,\|\cdot\|)$ be an $RN$ module  over $R$ with base
$(\Omega,{\cal F},P)$ such that $E$ has the countable concatenation
property, $k \in L^{0}_{++}(\mathcal {F})$,  $f\in E^{\ast}$ and  $G$ an
$L^{0}(\mathcal {F})-$convex subset of $E$ such that $G$  has the countable
concatenation property. Further, if $x_{0}\in G$ satisfies $G\bigcap
(K(f,k)+x_{0})=\{x_{0}\}$, then there exists $g \in E^{\ast}$ such
that
$$\bigvee g(G)=g(x_{0}) \textmd{ and } \|f-g\|^{\ast}\leq k.$$

\end{lemma}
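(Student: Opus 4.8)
The statement is a separation‐type result: given that $x_0$ maximally touches the cone $K(f,k)+x_0$ inside $G$, I want to produce an a.s.\ bounded random linear functional $g$ supporting $G$ at $x_0$ and lying within $L^0$-norm $k$ of $f$. The natural strategy is to separate the $L^0$-convex set $G$ from the cone $K(f,k)$ (translated to the origin) by a hyperplane, read off the supporting functional $g$ from the separation, and then estimate $\|f-g\|^{\ast}$ using the defining inequality $k\|y\|\le f(y)$ of the cone $K(f,k)$. Concretely, I would set $C:=K(f,k)+x_0$ and observe that the hypothesis $G\cap C=\{x_0\}$ says $G-x_0$ and $K(f,k)$ meet only at $\theta$; to get a genuine separation I would work with the interior of the cone, noting that $K(f,k)$ has nonempty $\mathcal{T}_c$-interior (since $f$ is a.s.\ bounded and the inequality $k\|y\|\le f(y)$ is strict on a neighborhood of suitable directions), and apply the interior separation Proposition~4.7 to the two $L^0$-convex sets $G$ and $C^o$ (the interior of the translated cone), whose countable concatenation hulls are disjoint by Proposition~4.11 together with the hypothesis that $G$ has the countable concatenation property.

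The key steps, in order, are as follows. First I would verify the hypotheses of Proposition~4.7: that $G$ and the cone are $L^0$-convex, that the $\mathcal{T}_c$-interior of the cone is nonempty, and that $H_{cc}(C^o)\cap H_{cc}(G)=\emptyset$, the latter following from $G\cap C=\{x_0\}$, the fact that $x_0$ lies on the boundary rather than the interior, and Proposition~4.11 guaranteeing $C^o$ inherits the countable concatenation property. Second, Proposition~4.7 yields $g\in E^{\ast}$ (taking real parts, which is automatic here since $K=R$) with $g(y)\le g(x_0)$ for all $y\in G$, i.e.\ $\bigvee g(G)=g(x_0)$, after normalizing so that the separating functional points the right way. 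Third, and this is the crux, I would prove $\|f-g\|^{\ast}\le k$: since $g$ separates $G-x_0$ from the cone $K(f,k)$, one obtains $g(y)\le 0$ whenever $k\|y\|\le f(y)$, and by testing this on rays $y=t w$ and on vectors close to the boundary of the cone one extracts the inequality $|f(w)-g(w)|\le k\|w\|$ for all $w\in E$, which is exactly $\|f-g\|^{\ast}\le k$ by the supremum formula for the dual $L^0$-norm.

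The main obstacle will be the third step, the norm estimate $\|f-g\|^{\ast}\le k$. Translating the geometric separation "$g\le 0$ on the cone $K(f,k)$" into the pointwise a.s.\ functional inequality $|(f-g)(w)|\le k\|w\|$ requires carefully exploiting the stratification structure: one cannot simply argue scalar-by-scalar as in the classical Bishop–Phelps lemma, because the relevant multipliers live in $L^0_{++}(\mathcal{F})$ and the inequalities are only a.s.\ inequalities on measurable sets. I expect to handle this by applying the local property and the hereditarily disjoint stratification $H(C^o,G)$ from Proposition~4.5 to localize the estimate on a suitable partition of $\Omega$, then patching via the countable concatenation property of $E$ and the dual formula $\|f-g\|^{\ast}=\bigvee\{|(f-g)(w)|:\|w\|\le 1\}$ recorded in Section~4. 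The convexity/cone inequality $k\|w\|\le f(w)$ together with $g(w)\le 0$ on the cone is what forces the bound, and getting both the supporting relation $\bigvee g(G)=g(x_0)$ and the norm bound simultaneously from a single separating $g$ is the delicate point.
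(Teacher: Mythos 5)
Your plan breaks at its first substantive step: the cone $K(f,k)$ need \emph{not} have nonempty $\mathcal{T}_{c}$-interior, and the lemma is stated (and is applied) in exactly the generality where it fails to. For any $y\in K(f,k)$ one has $k\|y\|\leq f(y)\leq \|f\|^{\ast}\|y\|$, so $\tilde{I}_{A}y=0$ where $A=[\|f\|^{\ast}<k]$; thus on any stratum where $\|f\|^{\ast}<k$ the cone reduces to $\{\theta\}$, and if $f=0$ then $K(f,k)=\{\theta\}$ outright, with empty interior, so Proposition 4.7 is simply inapplicable. Your heuristic that the inequality ``is strict on a neighborhood of suitable directions'' in effect requires $\|f\|^{\ast}>k$ on all of $\Omega$, which the hypotheses do not grant; worse, the applications need the degenerate case (in the proof of Theorem 4.2 the functional is normalized so that $\|f\|^{\ast}=\tilde{I}_{[\|f\|^{\ast}\neq 0]}$ while $k=\frac{1}{2}$, so the cone is degenerate on $[\|f\|^{\ast}=0]$). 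There is a second, independent gap at your ``crux'' step: a separating functional produced by Proposition 4.7 is determined only up to multiplication by elements of $L^{0}_{++}(\mathcal{F})$, and nonnegativity of $g$ on $K(f,k)$ carries no scale information, so it cannot by itself yield $\|f-g\|^{\ast}\leq k$. Concretely, $g=2f$ is nonnegative on $K(f,k)$ (there $f\geq k\|\cdot\|\geq 0$), yet $\|f-2f\|^{\ast}=\|f\|^{\ast}$ can exceed $k$; and testing on rays $y=tw$ only reproduces the same homogeneous sign data for $g$ and $\lambda g$ alike, so no ray test can pin down the needed normalization. You never produce the scaling, and even in the classical scalar setting this is where naive cone-separation arguments lose a factor (giving $2k$ instead of $k$) unless the multiplier is chosen optimally.

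The paper's proof avoids both problems at once by lifting to $E\times L^{0}(\mathcal{F})$: set $\phi(x)=k\|x\|-f(x)$ (an $L^{0}(\mathcal{F})$-convex function with the local property), $C_{1}=epi(\phi)$ and $C_{2}=(G-x_{0})\times\{0\}$. The interior $C_{1}^{o}=\{(x,r):\phi(x)<r \mbox{ on } \Omega\}$ is \emph{always} nonempty --- $(0,1)\in C_{1}^{o}$ --- no matter how degenerate $K(f,k)$ is; the countable concatenation hypotheses are checked via Lemma 4.10 and Proposition 4.11; and your hypothesis $G\cap(K(f,k)+x_{0})=\{x_{0}\}$ translates into $C_{1}\cap C_{2}=\{(0,0)\}$, which after a stratified boundary argument gives $\tilde{I}_{A}C_{1}^{o}\cap\tilde{I}_{A}C_{2}=\emptyset$ for every $A$ with $P(A)>0$, i.e.\ $H(C_{1}^{o},C_{2})=\Omega$, so Proposition 4.7 separates strictly on all of $\Omega$. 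Writing the separating $F$ as $F(x,r)=g(x)+r^{\ast}r$, the strict inequality at $(0,1)\in C_{1}^{o}$ forces $r^{\ast}>0$ on $\Omega$, and normalizing $r^{\ast}=1$ is precisely the missing scaling: testing $C_{2}$ gives $g(x_{0})=\bigvee g(G)$, while testing $(x,\phi(x))\in C_{1}$ gives $0\leq g(x)+k\|x\|-f(x)$ for all $x\in E$, whence (replacing $x$ by $-x$) $|f(x)-g(x)|\leq k\|x\|$ and $\|f-g\|^{\ast}\leq k$ exactly. One could try to rescue your route by stratifying --- take $\tilde{I}_{[\|f\|^{\ast}\leq k]}g=0$ there (where $\|f-g\|^{\ast}\leq k$ holds trivially) and run a cone separation on $[\|f\|^{\ast}>k]$, gluing by countable concatenation --- but that patching plus the normalization argument is exactly what the epigraph construction delivers in one stroke, which is why the paper proceeds as it does.
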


\begin{proof}
 Define a function  $\phi: E\rightarrow
L^{0}(\mathcal{F})$ by $\phi(x)=k\|x\|-f(x), \forall x\in E$. It is easy to check that $\phi$ is $L^{0}(\mathcal {F})-$convex and has the local
property.

Let $C_{1}:=epi(\phi)$ and $C_{2}:= (G-x_{0})\times \{0\}$.

We now prove that $C_{1}$ and $C_{2}$ satisfy the hypotheses of Proposition 4.7 as follows.

(1).  Obviously,  $C_{1}$ and $C_{2}$ are nonempty by $(0,0)\in C_{1}\bigcap C_{2}$. Since $\phi$ and $C$ are both $L^{0}(\mathcal {F})-$convex,  it is easy to check that $C_{1}$ and $C_{2}$ are both
 $L^{0}(\mathcal {F})-$convex.

It is clear that the ${\cal T}_{c}-$interior of $C_{1}$ denoted by  $C_{1}^{o}=\{(x,r)\in E\times
L^{0}(\mathcal{F}): \phi(x)< r \textmd{ on } \Omega\}$ is not empty by  $(0,1)\in
C_{1}^{o}.$

(2). Since $E$ has the countable concatenation property and $\phi$ has
the local property, it follows that $C_{1}$ has the countable
concatenation property by Lemma 4.10. Thus  $C_{1}^{o}$ has the
countable concatenation property by Proposition 4.11. By the countable concatenation
property of $G$, it is easy to check that $C_{2}$ has the countable concatenation property.

(3). We can now prove  $\tilde{I}_{A}\cdot C_{1}^{o}\bigcap
\tilde{I}_{A}\cdot C_{2}=\emptyset$  for any $A \in \mathcal {F}$ with
$P(A)>0$ as follows.

First, from  $G\bigcap (K(f,k)+x_{0})=\{x_{0}\}$, one can have
$(G-x_{0})\bigcap K(f,k)=\{0\}$, which implies $C_{1}\bigcap
C_{2}=\{(0,0)\}$, and it is clear that  $C_{1}^{o}\bigcap
C_{2}=\emptyset$ since $(0,0) \in \partial_{c}C_{1}$.

Second, from $C_{1}\bigcap C_{2}=\{(0,0)\}$, we can deduce $\tilde{I}_{A}\cdot
C_{1}\bigcap \tilde{I}_{A}\cdot C_{2}=\tilde{I}_{A}\cdot \{(0,0)\}$
for any $ A \in \mathcal {F}$ with $P(A)>0$. Otherwise, there exists
some $B\in \mathcal {F}$ with $P(B)>0$ and $\hat{y}\in E\times
L^{0}(\mathcal{F})$ such that  $\tilde{I}_{B}\cdot \hat{y} \in
\tilde{I}_{B}\cdot C_{1}\bigcap \tilde{I}_{B}\cdot C_{2}$ and
$\tilde{I}_{B}\cdot \hat{y}\neq \tilde{I}_{B}\cdot (0,0).$ Let us
take $z=\tilde{I}_{B}\cdot \hat{y} +\tilde{I}_{B^{c}}\cdot (0,0)$, then it is easy to  see that $\tilde{I}_{B^{c}}\cdot (0,0)\in\tilde{I}_{B^{c}}\cdot (C_{1}\bigcap C_{2}) \subset
\tilde{I}_{B^{c}}\cdot C_{1} \bigcap \tilde{I}_{B^{c}}\cdot C_{2}.$
Thus we can have $z\in C_{1}\bigcap C_{2}=\{(0,0)\}$, which implies $ \tilde{I}_{B}\cdot \hat{y}=
\tilde{I}_{B}\cdot (0,0)$,  a contradiction.

Third, we consider the problem in the relative topology. Since $\tilde{I}_{A}\cdot C_{1}^{o}$ is the relative $\mathcal {T}_{c}$-interior of $\tilde{I}_{A}\cdot C_{1}$ in $\tilde{I}_{A}\cdot (E\times L^{0}(\mathcal{F}))$ and $\tilde{I}_{A}\cdot (0,0)$ is a relative $\mathcal {T}_{c}$-boundary point of $\tilde{I}_{A}\cdot C_{1}$ in $\tilde{I}_{A}\cdot (E\times L^{0}(\mathcal{F}))$, we can have $\tilde{I}_{A}\cdot C_{1}^{o}\bigcap
\tilde{I}_{A}\cdot C_{2}=\emptyset$ for any $ A \in \mathcal {F}$ with
$P(A)>0.$

Since $(E\times L^{0}(\mathcal{F}))^{\ast}=E^{\ast} \times L^{0}(\mathcal{F})^{\ast}=E^{\ast} \times L^{0}(\mathcal{F})$ by noting $L^{0}(\mathcal{F})^{\ast}=L^{0}(\mathcal{F})$, then applying Proposition 4.7 to the special case that $H( C_{1}^{o}, C_{2})=\Omega$ we have that there exists $F\in E^{\ast}\times
L^{0}(\mathcal{F})$ such that

$$F(p)<F(q)\textmd{ on } \Omega \textmd{  for all } p\in C_{2} \textmd{ and } q\in C_{1}^{o} \eqno(4.1)$$

\noindent and

$$F(p)\leq F(q)  \textmd{  for all } p\in C_{2} \textmd{ and } q\in C_{1}, \eqno(4.2)$$

\noindent and hence we have $\bigvee F(C_{2})=0=\bigwedge  F(C_{1}).$

Further, there exists $g\in E^{\ast}$
and $r^{\ast} \in L^{0}(\mathcal{F})$ such that $F(x,r)=g(x)+r^{\ast}\cdot r, \forall (x,r)\in E\times L^{0}(\mathcal{F}).$
From  $(0,1)\in C_{1}^{o}$, it follows that $F(0,1)>0$ on $\Omega$ by (4.1), which implies $r^{\ast}>0$ on $\Omega$. Thus we can, without loss of generality, suppose $r^{\ast}=1$, and hence $F(x,r)=g(x)+ r,\forall (x,r)\in E\times L^{0}(\mathcal{F}).$

 Since $(x-x_{0},0 ) \in  C_{2}$ for any $x\in G$,  it follows that $0\geq F(x-x_{0},0)=g(x)-g(x_{0})$  by (4.2) and hence  $g(x_{0})=\bigvee  g(G).$

Since  $(x,\phi(x))\in C_{1}$ for $x\in E$, one can have that $0\leq F(x,\phi(x))=g(x)+\phi(x)=g(x)+ k\|x\|-f(x)$ by (4.2), which implies $\|f-g\|^{\ast}\leq k.$ $\square$

 \end{proof}

\begin{corollary}
 Let $(E,\|\cdot\|)$ be a $\mathcal
{T}_{c}-$complete RN module over $R$ with base $(\Omega,{\cal F},P)$
such that $E$ has  the countable concatenation
property, $G$ a $\mathcal {T}_{c}-$closed $L^{0}(\mathcal {F})-$convex subset of $E$ such that $G$
has the countable concatenation property, and  $f\in E^{\ast}$ which is bounded from above on $G$. Further, if  $\varepsilon \in
L^{0}_{++}(\mathcal {F})$ and $z\in G$ are such that $\bigvee f(G)\leq
f(z)+\varepsilon$, then for each $k \in L^{0}_{++}(\mathcal {F})$, there exist $g \in E^{\ast}$ and $x_{0}\in G$ such that:\\

\noindent $(1)$ $g(x_{0})=\bigvee g(G)$;\\
\noindent $(2)$ $\|x_{0}-z\|\leq k^{-1}\cdot \varepsilon$;\\
\noindent $(3)$ $\|f-g\|^{\ast}\leq k.$

\end{corollary}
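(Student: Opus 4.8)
The plan is to read Corollary 4.13 as the straightforward composition of the Ekeland-type Lemma 4.9 and the separation-type Lemma 4.12, so that no genuinely new analysis is required beyond verifying that the hypotheses align. First I would observe that the data $(E,\|\cdot\|)$, $G$, $f$, $\varepsilon$ and $z$ already satisfy exactly the hypotheses of Lemma 4.9: $E$ is a $\mathcal{T}_{c}$-complete $RN$ module with the countable concatenation property, $G$ is $\mathcal{T}_{c}$-closed with the countable concatenation property, $f\in E^{\ast}$ is bounded from above on $G$, and $\bigvee f(G)\leq f(z)+\varepsilon$ with $z\in G$. Hence, for the given $k\in L^{0}_{++}(\mathcal{F})$, Lemma 4.9 furnishes an $x_{0}\in G$ satisfying $x_{0}\in K(f,k)+z$, $\|x_{0}-z\|\leq k^{-1}\cdot\varepsilon$, and, crucially, $G\bigcap(K(f,k)+x_{0})=\{x_{0}\}$.

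Next I would feed this same $x_{0}$ into Lemma 4.12. Its hypotheses demand only that $E$ have the countable concatenation property, that $f\in E^{\ast}$, that $G$ be $L^{0}(\mathcal{F})$-convex with the countable concatenation property, and that $x_{0}\in G$ satisfy $G\bigcap(K(f,k)+x_{0})=\{x_{0}\}$; all of these hold in our situation, the last being precisely conclusion $(3)$ of the application of Lemma 4.9 just made. Therefore Lemma 4.12 yields $g\in E^{\ast}$ with $\bigvee g(G)=g(x_{0})$ and $\|f-g\|^{\ast}\leq k$.

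It then remains only to collect the three assertions: statement $(1)$, $g(x_{0})=\bigvee g(G)$, is immediate from Lemma 4.12; statement $(2)$, $\|x_{0}-z\|\leq k^{-1}\cdot\varepsilon$, is conclusion $(2)$ of Lemma 4.9; and statement $(3)$, $\|f-g\|^{\ast}\leq k$, is again from Lemma 4.12. Since the corollary asserts the joint existence of a single pair $(g,x_{0})$ enjoying all three properties, this completes the argument. The only point needing care---and thus the sole, rather mild, ``obstacle''---is to ensure that the element $x_{0}$ delivered by Lemma 4.9 is the very element supplied to Lemma 4.12, so that the metric estimate $(2)$ on $x_{0}$ and the support/perturbation data $(1)$ and $(3)$ about $x_{0}$ and $g$ all refer to a common point. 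This is automatic, since the linking hypothesis $G\bigcap(K(f,k)+x_{0})=\{x_{0}\}$ is attached to that same $x_{0}$; the substantive work has already been discharged inside Lemma 4.9 (via Theorem 3.10) and Lemma 4.12 (via the separation Proposition 4.7), and Corollary 4.13 is simply their conjunction, which is exactly the abstract Bishop--Phelps lemma used to prove Theorems 4.2 and 4.3.
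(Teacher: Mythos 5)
Your proposal is correct and follows exactly the paper's own proof: apply Lemma 4.9 to obtain $x_{0}\in G$ with $\|x_{0}-z\|\leq k^{-1}\cdot\varepsilon$ and $G\bigcap(K(f,k)+x_{0})=\{x_{0}\}$, then feed that same $x_{0}$ into Lemma 4.12 (using the $L^{0}(\mathcal{F})$-convexity and countable concatenation property of $G$) to produce $g\in E^{\ast}$ with $g(x_{0})=\bigvee g(G)$ and $\|f-g\|^{\ast}\leq k$. The hypothesis checks you perform are precisely what the paper leaves implicit, so nothing is missing.
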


\begin{proof}

By Lemma 4.9, there exists $x_{0}\in G$ such that
$$x_{0}\in K(f,k)+z, \|x_{0}-z\|\leq k^{-1}\cdot \varepsilon  \textmd{ and } G\bigcap
(K(f,k)+x_{0})=\{x_{0}\}.$$

Thus by Lemma 4.12,  there exists $g \in
E^{\ast}$ such that $g(x_{0})=\bigvee g(G)$ and $\|f-g\|^{\ast}\leq
k$.  $\square$

\end{proof}

We can now prove Theorem 4.2:

 \noindent{\bf Proof of Theorem 4.2.}\quad   We can, without loss of generality, suppose $\partial_{c} G\neq \emptyset$. Let  $z$ be in  $\partial_{c} G$  and $\delta $ in
$L^{0}_{++}(\mathcal {F})$,  then there exists some $y\in E\backslash
G$ such that $\|y-z\|\leq \frac{\delta}{2}$.

Since $y\bar{\in} G$, there exists $f \in E^{\ast}\backslash \{0\}$,  we can, without loss of generality, suppose that
$\|f\|^{\ast}=\tilde{I}_{[\|f\|^{\ast}\neq 0]}$ (otherwise we can consider $(\|f\|^{\ast})^{-1}\cdot f$) such that

 $$ \bigvee f(G)< f(y) \textmd{ on } H(\{y\}, G)\textmd{ and } \bigvee f(G)\leq f(y)  $$

\noindent by Proposition 4.6.

From $f(y)\leq f(z)+\|y-z\|\leq f(z) +\frac{\delta}{2}$, we have $\bigvee
f(G)\leq f(y) \leq  f(z)+\frac{\delta}{2}$.

 Then taking $\varepsilon =\frac{\delta}{2}$ and $k=\frac{1}{2}$ in
Corollary 4.13, it follows that
 there exists $g \in E^{\ast}$ and $x_{0}\in G$ such
that
$$g(x_{0})=\bigvee g(G), \|x_{0}-z\|\leq \delta \textmd{ and }\| f-g\|^{\ast}\leq
\frac{1}{2}. \eqno(4.3) $$

 We now prove $g\neq 0$. Since $f\neq 0$, we can have $P([\|f\|^{\ast}\neq 0]) >0$. Furthermore, since $\|f\|^{\ast}=\tilde{I}_{[\|f\|^{\ast}\neq 0]}$ and $\| f-g\|^{\ast}\leq
\frac{1}{2}$, one can  have $g\neq 0$.

 Thus it is clear that  $x_{0}$ is just desired from (4.3). $\square$

\begin{corollary}
 Let $(E,\|\cdot\|)$ be a $\mathcal
{T}_{c}-$complete RN module over $R$ with base $(\Omega,{\cal F},P)$
such that $E$ has the countable concatenation
property,  $G$ a $\mathcal {T}_{c}-$closed $L^{0}(\mathcal
{F})-$convex  subset of $E$ such that $G$ has the countable
concatenation property, and $f\in E^{\ast}\backslash \{0\}$  which is bounded from above on $G$. Then for any $\delta
\in L^{0}_{++}(\mathcal {F})$ with $\delta< \|f\|^{\ast}$ on
$[\|f\|^{\ast}>0]$, there exists  $g \in E^{\ast}\backslash \{0\}$  supporting $G$ such that
$\|f-g\|^{\ast}\leq \delta$.

\end{corollary}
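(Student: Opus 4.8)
The plan is to deduce the statement directly from Corollary 4.13 together with Theorem 3.5, the only genuinely new point being the non-degeneracy of the supporting functional $g$.

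First I would observe that $f\in E^{\ast}$, being a module homomorphism, automatically has the local property, and that by hypothesis $f$ is bounded from above on $G$. Hence Theorem 3.5 (in its ``bounded from above'' form) applies: fixing some $\varepsilon\in L^{0}_{++}(\mathcal{F})$, say $\varepsilon=\delta$, it produces a point $z\in G$ with $\bigvee f(G)\leq f(z)+\varepsilon$. This supplies exactly the hypothesis linking $z$ and $\varepsilon$ that is needed to invoke Corollary 4.13.

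Next I would apply Corollary 4.13 with the choice $k=\delta$. This yields $g\in E^{\ast}$ and $x_{0}\in G$ such that $g(x_{0})=\bigvee g(G)$ and $\|f-g\|^{\ast}\leq\delta$. The first relation says precisely that $x_{0}$ is a support point, so that once we know $g\neq 0$, the pair $(g,x_{0})$ exhibits $g$ as a nonzero a.s. bounded random linear functional supporting $G$, with $\|f-g\|^{\ast}\leq\delta$ as required. Note that the bound $\|x_{0}-z\|\leq k^{-1}\varepsilon$ from Corollary 4.13 plays no role here and may be discarded, which is why the particular choice of $\varepsilon$ is immaterial.

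The one step demanding care is verifying $g\neq 0$, and this is exactly where the hypothesis $\delta<\|f\|^{\ast}$ on $[\|f\|^{\ast}>0]$ enters. Using the reverse triangle inequality in the random normed module $(E^{\ast},\|\cdot\|^{\ast})$, I would estimate $\|g\|^{\ast}\geq\|f\|^{\ast}-\|f-g\|^{\ast}\geq\|f\|^{\ast}-\delta$, so on the set $[\|f\|^{\ast}>0]$ we get $\|g\|^{\ast}\geq\|f\|^{\ast}-\delta>0$. Since $f\neq 0$ forces $P([\|f\|^{\ast}>0])>0$, it follows that $\|g\|^{\ast}>0$ on a set of positive probability, whence $g\neq 0$. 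Everything else is a routine rereading of the conclusions of Corollary 4.13 against the definition of a supporting functional, so I expect this non-degeneracy estimate to be the only real obstacle.
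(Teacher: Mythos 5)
Your proposal is correct and follows essentially the same route as the paper: the paper also obtains $z$ from Theorem 3.5 (with $\varepsilon=1$ rather than your $\delta$, which as you note is immaterial) and then applies Corollary 4.13 with $k=\delta$, reading off $g(x_{0})=\bigvee g(G)$ and $\|f-g\|^{\ast}\leq\delta<\|f\|^{\ast}$ on $[\|f\|^{\ast}>0]$. Your explicit reverse-triangle-inequality verification that $g\neq 0$ is exactly the point the paper leaves implicit in its closing ``thus $g$ is desired,'' and it is valid since $f\neq 0$ gives $P([\|f\|^{\ast}>0])>0$.
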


\begin{proof}
 We can choose $z\in G$ such that $\bigvee f(G)\leq f(z)+1$ by Theorem 3.5. Taking
$\varepsilon=1 \textmd{ and } k=\delta$ in Corollary 4.13, then
there exists $g \in E^{\ast}$ and $x_{0}\in G$ such that

$$ \|f-g\|^{\ast}\leq \delta<
\|f\|^{\ast}\textmd{ on }[\|f\|^{\ast}>0]\textmd{ and } g(x_{0})=\bigvee g(G). $$

 Thus  $g$ is desired.   $\square$

\end{proof}

We can now prove Theorem 4.3:

\noindent{\bf Proof of Theorem 4.3.}\quad
 Since $G$ is a.s. bounded, it is easy to see that $f\in E^{\ast}$ is  bounded from above  on $G$, then we can get the conclusion from Corollary 4.14. $\square$

\begin{definition}[\cite{Z-G}]
 An $RN$ module $E$ is called $\mathcal
{T}_{\varepsilon,\lambda}$ (resp.,  $\mathcal
{T}_{c}$)-random subreflexive  if  the set of all $f \in E^{\ast}$ satisfying
$f(x)=\|f\|^{\ast}$ for some $x\in E$ with $\|x\|\leq 1$,
is $\mathcal{T}_{\varepsilon,\lambda}$(accordingly, $\mathcal
{T}_{c}$)-dense in $E^{\ast}$.

 \end{definition}

From Theorem 4.3, one can obtain Corollary 4.16 below:

\begin{corollary} [\cite{Z-G}]

 Let $(E,\|\cdot\|)$ be a $\mathcal
{T}_{c}-$complete RN module over $R$ with base $(\Omega,{\cal F},P)$
such that $E$ has  the countable concatenation
property. Then  $E$ is $\mathcal {T}_{c}-$random  subreflexive.

\end{corollary}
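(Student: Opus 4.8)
The plan is to obtain Corollary 4.16 as a direct specialization of Theorem 4.3, applying that theorem to the closed unit ball $B=\{x\in E:\|x\|\leq 1\}$ and then translating the resulting $\mathcal{T}_{c}$-density of support functionals into the subreflexivity condition of Definition 4.15. The whole argument rests on two observations: that $B$ meets all the hypotheses of Theorem 4.3, and that supporting $B$ is the same thing as attaining the random norm $\|\cdot\|^{\ast}$.

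First I would check that $B$ satisfies the hypotheses of Theorem 4.3. It is a.s. bounded, since $\bigvee\{\|p\|:p\in B\}\leq 1\in L^{0}_{+}(\mathcal{F})$; it is $\mathcal{T}_{c}$-closed, because the $L^{0}$-norm is $\mathcal{T}_{c}$-continuous (the $\mathcal{T}_{c}$-topology being the locally $L^{0}$-convex topology determined by $\|\cdot\|$); and it is $L^{0}(\mathcal{F})$-convex, since for $x,y\in B$ and $\xi,\eta\in L^{0}_{+}(\mathcal{F})$ with $\xi+\eta=1$ one has $\|\xi x+\eta y\|\leq \xi\|x\|+\eta\|y\|\leq \xi+\eta=1$. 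The only point requiring care is the countable concatenation property of $B$: given $x_{n}\in B$ and a countable partition $\{A_{n}\,|\,n\in N\}$ of $\Omega$ to $\mathcal{F}$, the concatenation $x=\sum_{n}\tilde{I}_{A_{n}}x_{n}$ is well defined in $E$ because $E$ itself has the countable concatenation property, and applying $\tilde{I}_{A_{n}}$ together with the local property of the norm yields $\tilde{I}_{A_{n}}\|x\|=\|\tilde{I}_{A_{n}}x\|=\|\tilde{I}_{A_{n}}x_{n}\|=\tilde{I}_{A_{n}}\|x_{n}\|\leq \tilde{I}_{A_{n}}$ for every $n$, whence $\|x\|\leq 1$ and $x\in B$.

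Granting these, Theorem 4.3 gives that the set $S$ of a.s. bounded random linear functionals supporting $B$ is $\mathcal{T}_{c}$-dense in $E^{\ast}$. The decisive step is then to identify $\bigvee f(B)$ with $\|f\|^{\ast}$. Since $B$ is symmetric ($-B=B$ because $\|-x\|=\|x\|$), for every $x\in B$ both $f(x)$ and $f(-x)=-f(x)$ lie in $f(B)$, so $|f(x)|=f(x)\vee f(-x)\leq \bigvee f(B)$, and taking the supremum over $x\in B$ gives $\|f\|^{\ast}=\bigvee\{|f(x)|:x\in B\}\leq \bigvee f(B)$; the reverse inequality $\bigvee f(B)\leq \|f\|^{\ast}$ is immediate from $f(x)\leq |f(x)|\leq \|f\|^{\ast}$. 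Hence $\bigvee f(B)=\|f\|^{\ast}$, so each $f\in S$ admits some $x\in B$ (i.e. $\|x\|\leq 1$) with $f(x)=\bigvee f(B)=\|f\|^{\ast}$, which is exactly the defining condition in Definition 4.15.

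It follows that $S$ is contained in the set $D=\{f\in E^{\ast}:f(x)=\|f\|^{\ast}\text{ for some }x\in E\text{ with }\|x\|\leq 1\}$, and since $S$ is already $\mathcal{T}_{c}$-dense in $E^{\ast}$, so is the larger set $D$; this is precisely the assertion that $E$ is $\mathcal{T}_{c}$-random subreflexive. I expect the main obstacle to lie not in the density argument itself but in the two supporting verifications: confirming that $B$ inherits the countable concatenation property through the local property of the norm, and establishing $\bigvee f(B)=\|f\|^{\ast}$ via the symmetry of $B$. Once these are in hand, the corollary follows immediately from Theorem 4.3.
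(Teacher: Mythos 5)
Your proposal is correct and follows exactly the route the paper intends: the paper derives Corollary 4.16 directly from Theorem 4.3, and your argument simply fills in the implicit details, namely that the closed unit ball $B=\{x\in E:\|x\|\leq 1\}$ is a.s.\ bounded, $\mathcal{T}_{c}$-closed, $L^{0}(\mathcal{F})$-convex and (via the countable concatenation property of $E$ together with the $L^{0}$-norm axiom $\|\tilde{I}_{A}x\|=\tilde{I}_{A}\|x\|$) has the countable concatenation property, and that by the symmetry of $B$ one has $\bigvee f(B)=\|f\|^{\ast}=\vee\{|f(x)|:\|x\|\leq 1\}$, so supporting $B$ is exactly norm-attainment in the sense of Definition 4.15. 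All verifications are sound, so the corollary follows from Theorem 4.3 just as in the paper.
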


In \cite{Z-G}, Zhao and Guo illustrate that Corollary 4.16 may not hold if $E$ does not have the countable concatenation
property. In addition, according to  Propositions 3.7, 3.9 and Corollary 4.16,  we can
obtain  Corollary 4.17 below (namely, the $\mathcal{T}_{\varepsilon,\lambda}-$random  subreflexivity) by  the countable
concatenation property of the set of all $f \in E^{\ast}$ satisfying
$f(x)=\|f\|^{\ast}$ for some $x\in E$ with $\|x\|\leq 1$. More generally, by Propositions 3.7 and 3.9 and by the observation that $H_{G}\bigcup \{0\}$ has the countable concatenation property, where $H_{G}$ denotes the set of  a.s. bounded random linear functionals supporting $G$, one can similarly see that Theorem 4.3 still holds under the $(\varepsilon,\lambda)$-topology, see the paragraph following the statement of Theorem 4.3.

\begin{corollary}[\cite{Z-G}]

Let $(E,\|\cdot\|)$ be a $\mathcal
{T}_{\varepsilon,\lambda}-$complete RN module over $R$ with base $(\Omega,{\cal F},P)$. Then  $E$ is $\mathcal
{T}_{\varepsilon,\lambda}-$random  subreflexive.

\end{corollary}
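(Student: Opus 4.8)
The plan is to transfer the $\mathcal{T}_{c}$-random subreflexivity of $E$ (Corollary 4.16) to the $(\varepsilon,\lambda)$-setting by means of the coincidence of the two closures provided by Proposition 3.9. Write $D:=\{f\in E^{\ast}: f(x)=\|f\|^{\ast}\textmd{ for some }x\in E\textmd{ with }\|x\|\leq 1\}$ for the set whose density is at issue. First I would invoke Proposition 3.7: since $E$ is $\mathcal{T}_{\varepsilon,\lambda}$-complete, it is automatically $\mathcal{T}_{c}$-complete and has the countable concatenation property. Hence Corollary 4.16 applies and tells us that $D$ is $\mathcal{T}_{c}$-dense in $E^{\ast}$, that is, $\bar{D}_{c}=E^{\ast}$. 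Recall also that $E^{\ast}$ is itself $\mathcal{T}_{\varepsilon,\lambda}$-complete and therefore has the countable concatenation property.

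The key step is to show that $D$ has the countable concatenation property, for then Proposition 3.9, applied to the $RN$ module $E^{\ast}$ and the subset $D$, yields $\bar{D}_{\varepsilon,\lambda}=\bar{D}_{c}=E^{\ast}$, which is exactly the assertion that $D$ is $\mathcal{T}_{\varepsilon,\lambda}$-dense in $E^{\ast}$. To verify the countable concatenation property, take $\{f_{n}:n\in N\}\subset D$ and a countable partition $\{A_{n}:n\in N\}$ of $\Omega$ to $\mathcal{F}$. Since $E^{\ast}$ has the countable concatenation property, $f:=\sum_{n}\tilde{I}_{A_{n}}f_{n}$ is well defined in $E^{\ast}$ and satisfies $\tilde{I}_{A_{n}}f=\tilde{I}_{A_{n}}f_{n}$ for every $n$. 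For each $n$ pick $x_{n}\in E$ with $\|x_{n}\|\leq 1$ and $f_{n}(x_{n})=\|f_{n}\|^{\ast}$, and set $x:=\sum_{n}\tilde{I}_{A_{n}}x_{n}\in E$, which is well defined by the countable concatenation property of $E$ and satisfies $\tilde{I}_{A_{n}}x=\tilde{I}_{A_{n}}x_{n}$. Since $\|\cdot\|$ has the local property, $\tilde{I}_{A_{n}}\|x\|=\|\tilde{I}_{A_{n}}x_{n}\|=\tilde{I}_{A_{n}}\|x_{n}\|\leq\tilde{I}_{A_{n}}$ for each $n$; summing over the partition gives $\|x\|\leq 1$, so $x$ is an admissible witness.

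It then remains to check $f(x)=\|f\|^{\ast}$. Here I would use that every element of $E^{\ast}$ is a module homomorphism (hence has the local property) and that $\|\cdot\|^{\ast}$ is an $L^{0}$-norm (hence also has the local property). Concretely, for each $n$ one has $\tilde{I}_{A_{n}}f(x)=\tilde{I}_{A_{n}}f(\tilde{I}_{A_{n}}x)=\tilde{I}_{A_{n}}f_{n}(\tilde{I}_{A_{n}}x_{n})=\tilde{I}_{A_{n}}f_{n}(x_{n})=\tilde{I}_{A_{n}}\|f_{n}\|^{\ast}=\tilde{I}_{A_{n}}\|f\|^{\ast}$, where the last equality uses the local property of $\|\cdot\|^{\ast}$ together with $\tilde{I}_{A_{n}}f=\tilde{I}_{A_{n}}f_{n}$. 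Summing over the partition $\{A_{n}\}$ produces $f(x)=\|f\|^{\ast}$, so $f\in D$ and the countable concatenation property of $D$ is established; the conclusion follows as above.

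The main obstacle is precisely this last verification: one must keep the two distinct local-property arguments apart, one for the functional $f$ acting on the concatenated vector $x$ and one for the norm $\|\cdot\|^{\ast}$ on $E^{\ast}$, and confirm that the chosen witnesses $x_{n}$ concatenate to a genuine witness $x$ of norm at most $1$. Once the countable concatenation property of $D$ is in hand, Proposition 3.9 supplies the coincidence of closures with no further estimate required.
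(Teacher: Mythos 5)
Your proposal is correct and takes essentially the same route the paper sketches: by Propositions~3.7, 3.9 and Corollary~4.16, once one observes that the set $D$ of norm-attaining functionals has the countable concatenation property, the $\mathcal{T}_{c}$-density transfers to $\mathcal{T}_{\varepsilon,\lambda}$-density. Your explicit verification that $D$ has the countable concatenation property (concatenating the witnesses $x_{n}$ and separating the module-homomorphism property of $f$ from the $L^{0}$-norm property of $\|\cdot\|^{\ast}$) is precisely the detail the paper states without proof.
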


\begin{remark}

The proofs given in \cite{Z-G} of Corollaries 4.16 and 4.17 are constructive and thus skillful so that Zhao and Guo can avoid the transfinite induction method, whereas our proofs here are relatively simple but we have to  employ the Ekeland's variational principle, namely we inexplicitly  use the transfinite induction method.

\end{remark}








\end{document}